\theoremstyle{plain}
\newtheorem{theorem}{Theorem}[section]
\newtheorem{lemma}[theorem]{Lemma}
\newtheorem{claim}[theorem]{Claim}
\newtheorem{conjecture}[theorem]{Conjecture}
\date{}
\title{\vspace{-0.9cm}Maximal Chordal Subgraphs}
\author{
Lior Gishboliner\thanks{Department of Mathematics, ETH, Z\"urich, Switzerland. Research supported in part by SNSF grant 200021\_196965. Email: \textbf{\{lior.gishboliner, benjamin.sudakov\}@math.ethz.ch}.}
\and Benny Sudakov\footnotemark[1]
}
\begin{document}

\maketitle

\begin{abstract}
    A chordal graph is a graph with no induced cycles of length at least $4$. 
    Let $f(n,m)$ be the maximal integer such that every graph with $n$ vertices and $m$ edges has a chordal subgraph with at least $f(n,m)$ edges. In 1985 Erd\H{o}s and Laskar posed the problem of estimating $f(n,m)$. In the late '80s, Erd\H{o}s, Gy\'arf\'as, Ordman and Zalcstein determined the value of $f(n,n^2/4+1)$ and made a conjecture on the value of $f(n,n^2/3+1)$. In this paper we prove this conjecture and answer the question of Erd\H{o}s and Laskar, determining $f(n,m)$ asymptotically for all $m$ and exactly for $m \leq n^2/3+1$. 
\end{abstract}

\section{Introduction}
One of the central questions in extremal combinatorics can be formulated as follows. Given a graph $G$ and a property $\cal P$, what is the maximal subgraph of $G$ one can find which satisfies this property. The study of this problem goes back to the work of Tur\'an in 1941, whose theorem states that the largest subgraph of the $n$-vertex complete graph with no clique of size $k+1$ is the complete $k$-partite graph with sides as equal as possible. This graph is called the Tur\'an graph. We denote it by $T_k(n)$ and its size by $t_k(n)$.
Tur\'an's theorem is the starting point of extremal graph theory and has inspired extensive research. One such research direction studies which other (more elaborate) structures must appear in a graph with more than $t_k(n)$ edges. 
For example, a series of works determined how many $(k+1)$-cliques must exist in a graph with $t_k(n) + a$ edges (for a suitable range of $a$) \cite{Erdos62,Erdos69,LS}. Other examples are results on finding many $(k+1)$-cliques which share one or more vertices \cite{Erdos62,Edwards_n/6,KN79,EFGG}, and results on finding $(k+1)$-cliques with large degree sum \cite{Edwards76,Edwards77,Faudree92,BN}. 

In this paper we study the Tur\'an type problem for chordal graphs. A graph is called chordal if it contains no induced cycle of length at least $4$. Chordal graphs are one of the most studied classes in graph theory and have numerous applications, for example in semidefinite optimization (see the survey \cite{semidefinite}) and evolutionary trees (see \cite{EK}). In 1985, Erd\H{o}s and Laskar \cite{EL} 
asked to determine the maximum integer $f(n,m)$ such that every graph with $n$ vertices and $m$ edges contains a chordal subgraph with at least $f(n,m)$ edges. To put this question under the umbrella of classical extremal graph theory, one needs to consider equivalent definitions of chordal graphs. It is well-known that a graph is chordal if and only if it can be constructed from a single-vertex graph by repeatedly adding a vertex and connecting it to a clique of the current graph\footnote{A related fact is that a graph is chordal if and only if it has a tree-decomposition in which the bags are cliques. So chordal graphs can be thought of as "trees of cliques".} (this is called a perfect elimination ordering), see \cite[Chapter 4]{Golumbic}. So if $G$ is a triangle-free graph, then every chordal subgraph of $G$ must be a forest. More generally, if $G$ has no cliques of size $k+1$, then every chordal subgraph of $G$ has at most $(k-1)(n-k+1) + \binom{k-1}{2} =  (k-1)n - \binom{k}{2}$ edges. In particular, this bound applies to $k$-partite graph. Another way of proving this bound for $k$-partite graphs is to observe that if $G$ is $k$-partite with parts $V_1,\dots,V_k$ and $H$ is a chordal subgraph of $G$, then $e_H(V_i,V_j) \leq |V_i| + |V_j| - 1$ for every $i < j$ (because a chordal subgraph of a bipartite graph must be a forest). Hence, $e(H) \leq \sum_{i < j}(|V_i| + |V_j| - 1) = (k-1)n - \binom{k}{2}$. 

The above discussion shows that if $m \leq t_k(n)$ then $f(n,m) \leq (k-1)n  - \binom{k}{2}$. It is natural to guess that the value of $f(n,m)$ ``jumps'' as $m$ increases from $t_k(n)$ to $m = t_k(n) + 1$, because at this point the graph must contain $(k+1)$-cliques. Erd\H{o}s and Laskar \cite{EL} proved that this is indeed the case for $k=2$, showing that $f(n,t_2(n)+1) \geq (1+\varepsilon)n$. In the late 80's, Erd\H{o}s, Gy\'arf\'as, Ordman and Zalcstein \cite{EGOZ} determined the value of $f(n,t_2(n)+1)$ exactly for even $n$, showing that $f(n,\frac{n^2}{4}+1) = \frac{3n}{2} - 1$. This bound is achieved by the graph $T_2(n) + e$, obtained by adding an edge to the Tur\'an graph $T_2(n)$. It is natural to conjecture that for every $k$ and $n$, the value of $f(n,t_k(n)+1)$ is determined by $T_k(n) + e$, which is the graph obtained by adding an edge to a largest class of $T_k(n)$. It is not hard to check that the largest chordal subgraph of $T_k(n)+e$ has $kn - \lceil \frac{n}{k} \rceil + 2 - \binom{k+1}{2}$ edges. So we get the following conjecture. 
\begin{conjecture}\label{conj:+1}
	$f(n,t_k(n)+1) = kn - \lceil \frac{n}{k} \rceil + 2 - \binom{k+1}{2}$. 
\end{conjecture}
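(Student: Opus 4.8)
The plan is to establish the two inequalities $f(n,t_k(n)+1)\ge kn-\lceil n/k\rceil+2-\binom{k+1}{2}$ and $f(n,t_k(n)+1)\le kn-\lceil n/k\rceil+2-\binom{k+1}{2}$ separately. The upper bound is immediate: it is exactly the assertion, already recorded above, that the largest chordal subgraph of $T_k(n)+e$ has $kn-\lceil n/k\rceil+2-\binom{k+1}{2}$ edges. (Sketch of that routine computation: if the extra edge $e$ is not used then the chordal subgraph lies inside a $k$-partite graph and so has at most $(k-1)n-\binom{k}{2}<kn-\lceil n/k\rceil+2-\binom{k+1}{2}$ edges; if $e$ is used, then in a perfect elimination ordering the $\lceil n/k\rceil-2$ vertices of the largest part other than the two endpoints of $e$ have all their neighbours in the union of the other $k-1$ parts, hence back-degree at most $k-1$, which together with the standard bound for chordal graphs of clique number $k+1$ yields the required extra deficit.) So the content of the conjecture is the lower bound: every $n$-vertex graph $G$ with $t_k(n)+1$ edges contains a chordal subgraph with at least $kn-\lceil n/k\rceil+2-\binom{k+1}{2}$ edges.

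For the lower bound the crucial device is a construction turning a clique into a large chordal subgraph. Given a clique $Q$ of $G$, let $H_Q$ be the spanning subgraph of $G$ whose edges are all the edges inside $Q$ together with, for every vertex $v\notin Q$, all edges of $G$ from $v$ to $Q$. Then $H_Q$ is chordal: build it by first forming the clique $Q$ and then adding each $v\notin Q$ joined to the clique $N_G(v)\cap Q$ of the current graph. Since $Q$ spans a clique,
\[
e(H_Q)=\binom{|Q|}{2}+e_G\bigl(Q,\,V(G)\setminus Q\bigr)=\sum_{w\in Q}\deg_G(w)-\binom{|Q|}{2}.
\]
Thus the lower bound reduces to exhibiting, in any graph $G$ with more than $t_k(n)$ edges, a clique $Q$ with $\sum_{w\in Q}\deg_G(w)-\binom{|Q|}{2}\ge kn-\lceil n/k\rceil+2-\binom{k+1}{2}$. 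When $\omega(G)=k+1$ a maximum clique has $k+1$ vertices, so it is enough to find a copy of $K_{k+1}$ whose vertices have degree sum at least $kn-\lceil n/k\rceil+2$; the case $\omega(G)\ge k+2$ can be dealt with by a short separate argument and is not where the difficulty lies.

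Hence the heart of the proof, and the step I expect to be the main obstacle, is the following: if $e(G)>t_k(n)$ then $G$ contains a copy of $K_{k+1}$ whose degree sum is at least $kn-\lceil n/k\rceil+2$. This is sharp, attained by $T_k(n)+e$ with $e$ inside a largest part $V_1$: there every $K_{k+1}$ consists of the two endpoints of $e$, of degree $n-\lceil n/k\rceil+1$ each, together with one vertex from each of $V_2,\dots,V_k$, and the degrees sum to exactly $kn-\lceil n/k\rceil+2$. Qualitative results of this flavour (finding $(k+1)$-cliques of large degree sum in graphs with more than $t_k(n)$ edges) are classical \cite{Edwards76,Edwards77,Faudree92,BN}, and the case $k=2$ is in essence what \cite{EGOZ} proved; the difficulty is to pin down the precise additive term, including the rounding $\lceil n/k\rceil$, which is exactly what makes the final bound exact rather than merely asymptotic. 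A natural route is a stability dichotomy: if $G$ is far from being $k$-partite then supersaturation produces many copies of $K_{k+1}$ and one of large degree sum can be found with room to spare, whereas if $G$ is close to $T_k(n)$ one analyses directly where the surplus edge (or edges) sit and reads off a $K_{k+1}$ with the required degree sum. Carrying out the second case precisely and uniformly in $k$ is the delicate point, which is presumably why the exact value of $f(n,m)$ is obtained here only for $m\le t_3(n)+1$ while the asymptotics suffice for larger $m$. Once the degree-sum statement is in hand the proof closes at once: take the guaranteed $K_{k+1}$, form $H_Q$, and the displayed identity gives $e(H_Q)\ge kn-\lceil n/k\rceil+2-\binom{k+1}{2}$, matching the upper bound.
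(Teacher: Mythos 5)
Your reduction of the lower bound to a single ``$K_{k+1}$ with large degree sum'' is not sound, and in fact the degree-sum statement you identify as ``the heart of the proof'' is false. Take $k=2$ and $n\ge 8$ even, and let $G$ consist of the complete bipartite graph with parts $A$, $B$ of sizes $n/2+1$ and $n/2-1$ together with two disjoint edges $u_1u_2$, $u_3u_4$ placed inside $A$. Then $e(G)=(n/2+1)(n/2-1)+2=n^2/4+1=t_2(n)+1$ and $\omega(G)=3$, so your separate caveat about $\omega\ge k+2$ does not apply. Every triangle of $G$ has the form $\{u_1,u_2,w\}$ or $\{u_3,u_4,w\}$ with $w\in B$, with degree sum $\tfrac n2+\tfrac n2+\bigl(\tfrac n2+1\bigr)=\tfrac{3n}{2}+1<\tfrac{3n}{2}+2=kn-\lceil n/k\rceil+2$. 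So no clique $Q$ gives $e(H_Q)\ge \tfrac{3n}{2}-1$; yet $G$ does contain a chordal subgraph with $\tfrac{3n}{2}-1$ edges (start from the triangle $u_1,u_2,w_1$, then attach $u_3$ to $w_1$, attach $u_4$ to $\{u_3,w_1\}$, attach each remaining $w\in B$ to $\{u_1,u_2\}$, and each remaining $a\in A$ to $w_1$; this gives $3+1+2+2(n/2-2)+(n/2-3)=3n/2-1$ edges). In other words, the optimal chordal subgraph is genuinely not of the ``all edges touching a clique'' form $H_Q$, and your plan cannot close the argument. This is exactly why the proofs in this paper (the EGOZ-style argument for $k=2$ in Section 3 and the $K_4$-version for $k=3$ in Section 4) are not one-shot constructions from a single clique but inductions which sometimes delete one or two vertices and build a chordal subgraph across several stages, only resorting to $H_Q$ when a clique of large degree sum happens to exist (Case 1 there).

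Two further remarks. First, the statement you are proving is Conjecture~\ref{conj:+1}, which remains open in this paper for $k\ge4$; what is actually proved is the case $k=3$ (Theorem~\ref{thm:k=3}, via the stronger Theorem~\ref{thm:K4}), the case $k=2$ (via Theorem~\ref{thm:k=2}), and an asymptotic version for all $k$ with an $O(\sqrt n)$ error (Theorem~\ref{thm:general}). A correct write-up should either restrict to $k\le3$ and supply the induction, or honestly present the $k\ge4$ case as conjectural. Second, even setting aside the counterexample, you do not actually establish the degree-sum claim; the ``stability dichotomy'' is only a heuristic and no argument is given for the near-extremal regime, which the example above shows is precisely where the naive statement fails.
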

The authors of \cite{EGOZ} only studied Conjecture \ref{conj:+1} in the cases $k=2,3$, although they very likely had the full conjecture in mind. For $k=3$, they proved that $f(n,t_3(n)+1) \geq 7n/3-6$ and asked to determine $f(n,t_3(n)+1)$. This question was later mentioned again in the problem survey of Gy\'arf\'as \cite{Gyarfas}. 
Answering this question, we resolve Conjecture \ref{conj:+1} for the case $k=3$.
\begin{theorem}\label{thm:k=3}
	$f(n,t_3(n)+1) = 3n-\lceil \frac{n}{3} \rceil - 4$.
\end{theorem}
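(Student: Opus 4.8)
\emph{Upper bound.} The bound $f(n,t_3(n)+1)\le 3n-\lceil n/3\rceil-4$ is witnessed by $G_0:=T_3(n)+e$, obtained from $T_3(n)$ with parts $A\supseteq\{a_1,a_2\}$, $B$, $C$ (so $|A|=\lceil n/3\rceil\ge|B|\ge|C|$) by adding the edge $a_1a_2$ inside $A$. For the construction, take the $K_4$ on $\{a_1,a_2,b_1,c_1\}$ (any $b_1\in B$, $c_1\in C$), attach each other vertex of $B$ to the triangle $\{a_1,a_2,c_1\}$, each other vertex of $C$ to $\{a_1,a_2,b_1\}$, and each vertex of $A\setminus\{a_1,a_2\}$ to the edge $\{b_1,c_1\}$; reversing the order just described is a perfect elimination ordering, and the edge count is $1+2|B|+2|C|+(|B|+|C|-1)+2(|A|-2)=3n-|A|-4$. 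For the matching upper bound, let $H\subseteq G_0$ be chordal. If $a_1a_2\notin H$ then $H$ is $3$-partite, so $e(H)\le 2n-3<3n-|A|-4$. If $a_1a_2\in H$, then $H[B\cup C]$ is bipartite and chordal, hence a forest, so $e_H(B,C)\le|B|+|C|-1$; and $H[A\cup B]-a_2$ is bipartite and chordal, hence a forest on at most $|A|+|B|-1$ vertices, so $e(H[A\cup B])\le(|A|+|B|-2)+|N_H(a_2)\cap(A\cup B)|\le(|A|+|B|-2)+(|B|+1)$, giving $e_H(A,B)=e(H[A\cup B])-1\le|A|+2|B|-2$, and symmetrically $e_H(A,C)\le|A|+2|C|-2$. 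Since $e_H(A)=1$ and $e_H(B)=e_H(C)=0$, summing yields $e(H)\le 2|A|+3|B|+3|C|-4=3n-\lceil n/3\rceil-4$.

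\emph{Lower bound.} Let $G$ have $n$ vertices and $t_3(n)+1$ edges; we must find a chordal subgraph with at least $3n-\lceil n/3\rceil-4$ edges (small $n$ are checked directly, so assume $n$ large). Since $e(G)>t_3(n)$, $G$ contains $K_4$. One may first reduce to the regime $\delta(G)\ge n-\lceil n/3\rceil$: if $\deg_G(v)\le n-\lceil n/3\rceil-1=t_3(n)-t_3(n-1)-1$, then $G-v$ has more than $t_3(n-1)+1$ edges, so by induction and monotonicity of $f$ in $m$ it has a large chordal subgraph, to which $v$ is re-attached -- to a triangle when possible, with a little extra care in the few cases where only an edge is available. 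So assume $\delta(G)\ge n-\lceil n/3\rceil$, i.e.\ $\overline G$ has maximum degree below $n/3$ and $e(\overline G)=e(\overline{T_3(n)})-1$. Now the argument splits on how close $G$ is to $T_3(n)$: if $G$ is $\varepsilon n^2$-far from $T_3(n)$ then by supersaturation $G$ has $\Omega_\varepsilon(n^4)$ copies of $K_4$, spread out enough that greedily building a perfect elimination ordering attaches all but $o(n)$ vertices to a $K_4$ of the growing chordal subgraph, giving one with $(3-o(1))n>3n-\lceil n/3\rceil-4$ edges.

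The remaining, and main, case is when $G$ differs from $T_3(n)$ in at most $\varepsilon n^2$ edges. Fix a $3$-partition $A,B,C$ with $|A|\ge|B|\ge|C|$ realizing this, let $t$ be the number of within-part edges, $d$ the number of missing cross edges, and $\beta:=t_3(n)-(|A||B|+|A||C|+|B||C|)\ge0$ the imbalance deficit; the edge count forces $t=1+d+\beta$, so there is a within-part edge, which we may take (after possibly rebalancing the partition) to be $a_1a_2\subseteq A$. Choosing ``typical'' $b_1\in B$, $c_1\in C$ with $\{a_1,a_2,b_1,c_1\}$ a $K_4$ and $b_1$ (resp.\ $c_1$) adjacent to all but $O(\varepsilon n)$ of $A\cup C$ (resp.\ $A\cup B$), we build $H$ exactly as in the extremal example: attach (essentially all of) $B$ to the triangle $\{a_1,a_2,c_1\}$, (essentially all of) $C$ to $\{a_1,a_2,b_1\}$, and $A\setminus\{a_1,a_2\}$ to the edge $\{b_1,c_1\}$, which naively contributes $3n-|A|-4$ edges.

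The crux is the exact accounting needed to reach $3n-\lceil n/3\rceil-4$ rather than merely $3n-\lceil n/3\rceil-O(1)$: every deficiency must be charged against a surplus within-part edge. To beat $3n-\lceil n/3\rceil-4$ starting from $3n-|A|-4$ one must compensate (i) the imbalance $\beta$ when $|A|>\lceil n/3\rceil$, (ii) the $d$ missing cross edges, and (iii) the $O(\varepsilon n)$ atypical vertices of $B\cup C$ forced to attach along an edge rather than a triangle. Each is absorbed by the same move: a spare within-part edge $xy$ lets one attach $x$ to $\{b_1,c_1\}$ (two edges) and then $y$ to the resulting triangle $\{x,b_1,c_1\}$ (three edges), upgrading a ``$2$''-contribution to a ``$3$''; and there are $t-1=d+\beta$ spare within-part edges by the edge identity, which will suffice provided $b_1,c_1$ are chosen (via a counting / dependent-random-choice argument) so that all but at most $|A|-2$ vertices see one of the two triangles. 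Carrying out this balancing precisely -- and disposing of the degenerate sub-cases where the partition is very unbalanced, where the within-part edges are all forced into a small part, or where the reductions above leave a small residue -- is where essentially all of the work lies.
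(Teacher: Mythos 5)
Your upper bound is correct and is essentially the paper's construction ($T_3(n)+e$), analyzed the same way.

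Your lower bound takes a genuinely different route from the paper. The paper proves a stronger statement by direct induction: for every 4-clique $X$ of $G$, there is a chordal subgraph with $\geq 3n-\lceil n/3\rceil - 4$ edges containing all six edges of $X$; the freedom to prescribe $X$ is what lets the induction close, and the proof is a careful but elementary case analysis (Claims 5.5--5.11 in the paper). You instead propose a stability-type dichotomy (far from $T_3(n)$: supersaturation + greedy; close to $T_3(n)$: structural accounting against the extremal graph). That is an interesting alternative strategy, but as written it has two genuine gaps.

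First, the minimum-degree reduction is glossing over exactly the difficulty that the paper's strengthened hypothesis is designed to remove. If you delete a vertex $v$ with $d(v)\le n-\lceil n/3\rceil-1$ and apply induction to $G-v$, you get a chordal $H'$ with $\ge 3(n-1)-\lceil (n-1)/3\rceil-4$ edges. The deficit to recover is $3-\lceil n/3\rceil+\lceil (n-1)/3\rceil$, which equals $3$ when $n\equiv 0,2\pmod 3$ and $2$ when $n\equiv 1\pmod 3$. In two out of three residue classes you must re-attach $v$ as a simplicial vertex to a \emph{triangle} of $H'$ lying inside $N_G(v)$, and even in the remaining class you need an \emph{edge} of $H'$ inside $N_G(v)$. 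Nothing in your setup guarantees this: the induction only gives you some chordal $H'$, and $N_G(v)$ could be independent (or triangle-free) in it. "A little extra care" is doing an enormous amount of work here; this is precisely why the paper proves the version with a prescribed $K_4$, so that it can steer the inductive subgraph to interact usefully with the vertices being put back.

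Second, the close-to-$T_3(n)$ case is a description of what one would like to happen rather than a proof. The claim that the $t-1=d+\beta$ spare within-part edges can always be used to "upgrade 2's to 3's" and that a suitable $b_1,c_1$ exist via "a counting / dependent-random-choice argument" so that at most $|A|-2$ vertices attach along an edge, is the entire content of the exact result, and you acknowledge as much ("is where essentially all of the work lies"). Until that accounting is carried out -- including the degenerate sub-cases you list -- the lower bound is not established. Note also that even the far case needs care: the target is about $8n/3$ edges, so a greedy perfect elimination ordering that attaches vertices to triangles gives roughly $3$ edges per vertex, and you can afford to fail on at most $\approx n/9$ vertices; supersaturation alone does not obviously give that level of control, and in any case this branch only covers large $n$, so the base cases grow with the $\varepsilon$ you pick.

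In short: upper bound fine; lower bound is a plausible but unverified program, and the specific mechanism the paper uses to make the induction close (the arbitrary-$K_4$ strengthening) is exactly the ingredient missing from your reduction step.
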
 
Our next result proves Conjecture \ref{conj:+1} asymptotically for every $k$. In fact, we go a step further and determine $f(n,m)$ asymptotically for {\em every} value of $m$, answering the question of  Erd\H{o}s and Laskar.
\begin{theorem}\label{thm:general}
	Let $k,n \geq 1$ and $t_{k}(n)+1 \leq m \leq t_{k+1}(n)$. Set $a = m - t_k(n)$. Then $$
	f(n,m) = (k-1/k)n + \sqrt{2(k+1)a/k} - \binom{k+1}{2} - O(\sqrt{n}).
	$$ 
\end{theorem}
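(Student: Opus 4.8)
The plan is to prove the claimed value of $f(n,m)$ by matching a lower bound (every $n$-vertex graph with $m$ edges contains a large chordal subgraph) with a construction. The workhorse for the lower bound is the following consequence of the perfect-elimination characterisation: if $G$ contains a clique $K$ on $q$ vertices, then $G$ has a chordal subgraph with at least $\sum_{v\in K}d_G(v)-\binom q2$ edges. Indeed, put the vertices of $K$ last in an elimination order and let every $u\notin K$ keep only its edges into $N_G(u)\cap K$, which is a clique; the resulting subgraph is chordal and has $\binom q2+e_G(K,V\setminus K)=\sum_{v\in K}d_G(v)-\binom q2$ edges. So it suffices to find, in any $n$-vertex graph with $t_k(n)+a$ edges, a $(k+1)$-clique $K$ with $\sum_{v\in K}d_G(v)\ge (k-\tfrac1k)n+\sqrt{2(k+1)a/k}-O(\sqrt n)$ (with a few refinements of the tool — a bounded extra layer of vertices attached on top of $K$, and a slightly more elaborate weighting of $K$ — to control the lower-order terms). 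I would obtain this clique by induction on $k$: take a vertex $v$ of nearly maximum degree $d$, pass to its link $G[N_G(v)]$ (which has $t_{k-1}(d)+a'$ edges for a surplus $a'$ controlled by $a$ and by $d-\tfrac{k-1}{k}n$), apply the inductive statement there, and adjoin $v$. Since $t_{k-1}(d)$ is quadratic in $d$, degree excess feeds back quadratically as extra edges in the link, and concavity of $x\mapsto\sqrt x$ converts the surplus $a$ into an additive gain of order $\sqrt a$ with the constant $\sqrt{2(k+1)/k}$; when $G$ is far from the extremal shape one instead produces the required chordal subgraph by a cruder argument (e.g.\ $G$ is then locally denser than $T_{k+1}$ somewhere and contains a large $k$-tree), so the clique argument is really only needed near extremality.

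For the upper bound one must exhibit, for each admissible $a$, an $n$-vertex graph $G^*$ with $t_k(n)+a$ edges all of whose chordal subgraphs have at most $(k-\tfrac1k)n+\sqrt{2(k+1)a/k}+O(\sqrt n)$ edges. The natural candidate is a $K_{k+2}$-free graph with $\chi(G^*)=k+1$ that interpolates between $T_k(n)+e$ (for $a=1$) and $T_{k+1}(n)$ (for $a=t_{k+1}(n)-t_k(n)$): start from a suitably tuned complete $(k+1)$-partite skeleton and prescribe the number and layout of the few edges joining the two smallest colour classes, choosing the part sizes to optimise the resulting bound (with possibly a slightly different recipe for small $a$ and for $a=\Theta(n^2)$). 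Since $G^*$ is $K_{k+2}$-free and $(k+1)$-chromatic, every chordal subgraph $H$ is a forest on each pair of colour classes, so $e_H(U_i,U_j)\le|U_i|+|U_j|-1$; but this alone only yields the bound $kn-\binom{k+1}2$. The real content of the upper bound is a sharper structural analysis showing that a chordal subgraph of $G^*$ cannot simultaneously be forest-maximal on all pairs of colour classes \emph{and} exploit a maximum-degree $(k+1)$-clique meeting the engineered sparse region — the unavoidable ``either/or'' loss being precisely the $\sqrt a$-sized reduction below $kn-\binom{k+1}2$; balancing these two competing extremes is what produces the value $(k-\tfrac1k)n+\sqrt{2(k+1)a/k}$.

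I expect the two hard points to be: (i) matching the constant $\sqrt{2(k+1)/k}$ in the supersaturation step — this is a lower-order ($\sqrt a$) effect on a main term of size $\Theta(n)$, so both directions demand careful bookkeeping of the degree excess through the induction; and (ii), probably the most technical part, bounding \emph{every} chordal subgraph of $G^*$ (not just the obvious clique-attachment one) uniformly over the whole range $1\le a\le t_{k+1}(n)-t_k(n)$, which requires a precise accounting of how a chordal subgraph distributes its edges among the colour classes and interacts with the sparse region — exactly where the square-root term is generated on the construction side.
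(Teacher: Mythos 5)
There are two genuine gaps. For the lower bound, your plan is to find a $(k+1)$-clique $K$ with degree sum at least $(k-\tfrac1k)n+\sqrt{2(k+1)a/k}-O(\sqrt n)$ and form the chordal subgraph by attaching the neighbours of $K$. Such a clique need not exist: take $k=1$ and $a=n^{3/2}$, and let $G$ be a disjoint union of cliques of size about $2\sqrt n$, so $e(G)\approx n^{3/2}=a$. Every edge of $G$ then has degree sum $O(\sqrt n)$, far below the required $2\sqrt a-O(\sqrt n)=\Theta(n^{3/4})$ — even though $G$ is itself chordal and so trivially contains a chordal subgraph with $a$ edges. You appeal to a ``cruder argument'' for the non-extremal regime (``locally denser than $T_{k+1}$, contains a large $k$-tree''), but that escape hatch is the entire content of the argument there and is left unspecified; it is not clear how to make it precise along those lines. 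The paper's mechanism is instead an induction on $n$: Lemma~\ref{lem:k-1 clique} (an Edwards--Faudree-type counting argument) gives a $(k-1)$-clique $x_1,\dots,x_{k-1}$ whose common neighbourhood $N$ carries at least $a$ edges; Claim~\ref{claim:deletion forest} then extracts from $G[N]$ a forest $F$ on $\lfloor\sqrt n\rfloor$ vertices with few components, all of whose vertices have degree below a threshold $d_0$ (if some vertex of $N$ exceeds $d_0$, the degree-sum-clique argument you describe does finish); one deletes $\{x_2,\dots,x_{k-1}\}\cup V(F)$, applies the induction hypothesis, and adds back the clique, the $F$-to-clique edges, and $F$, gaining about $k$ edges per deleted vertex, which beats the $k-\tfrac1k$ needed. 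So the paper recurses in $n$, not in $k$ via the link of a high-degree vertex.

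For the upper bound, you assert that the standard ``each pair of colour classes is a forest'' bound gives only $kn-\binom{k+1}{2}$ and that a ``sharper structural analysis'' of chordal subgraphs is required. This is not what happens. The paper's extremal graph is a complete $k$-partite graph with one large class $X$ of size $\tfrac{n+(k-1)r}{k}$, classes $Y_i$ of size $\tfrac{n-r}{k}$, and an $r\times r$ complete bipartite graph on $A,B\subseteq X$ with $r=\sqrt{2ka/(k+1)}$. Every chordal subgraph $H$ is bounded by precisely the forest argument, applied to the induced bipartite pairs $(A,B)$, $(A,Y_i)$, $(X\setminus A,Y_i)$, $(Y_i,Y_j)$; summing gives $e(H)\le kn-|X|+2r-\binom{k+1}{2}=(k-\tfrac1k)n+\sqrt{2(k+1)a/k}-\binom{k+1}{2}$. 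The $\sqrt a$-sized saving below $kn-\binom{k+1}{2}$ comes from the unbalanced part sizes together with the observation that the only $X$-neighbours of $A$ lie in $B$, so $e_H(A,X\setminus A)\le |A|+|B|-1=2r-1$ rather than $|A|+|X\setminus A|-1$; no finer property of chordal graphs is invoked.
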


The construction giving the upper bound in Theorem \ref{thm:general} is to take an (unbalanced) complete $k$-partite graph with $k-1$ smaller classes of the same size and one bigger class, and to add a balanced complete bipartite graph inside the bigger class. One then needs to optimize the sizes of the classes and the size of the complete bipartite graph so as to minimize the size of chordal subgraphs. It is best to take the $k-1$ smaller classes of size $\frac{n-k}{r}$, the bigger class of size $\frac{n+(k-1)r}{k}$ and the complete bipartite of size $r \times r$, where $r := \sqrt{\frac{2ka}{k+1}}$. See Figure 1, and see Section \ref{sec:general} for the details.

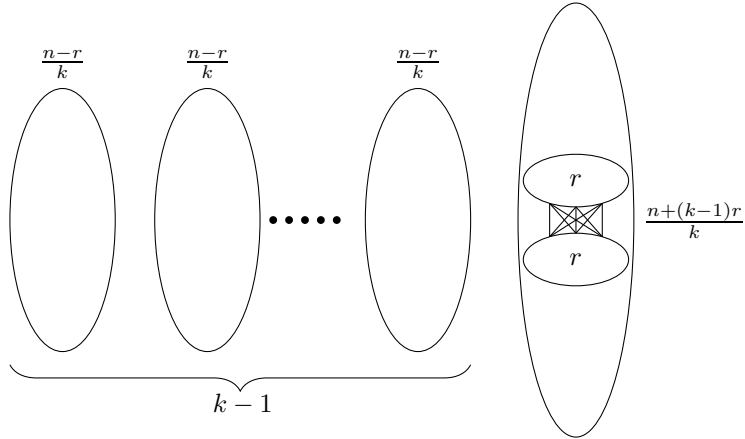
\begin{figure}\label{fig:construction}
    \centering
    \begin{tikzpicture}[scale = 0.7]
\draw (-2.75,0) ellipse (1 and 2.5);
\draw (0,0) ellipse (1 and 2.5);
\draw (4,0) ellipse (1 and 2.5);
\draw[dotted, line width = 3, line cap = round, dash pattern=on 0pt off 2\pgflinewidth] (1.25,0) -- (2.75,0); 
\draw (7,0) ellipse (1*1.1 and 3.75*1.1);
\draw (7,0.75) ellipse (1 and 0.5);
\draw (7,-0.75) ellipse (1 and 0.5);
\draw [decorate, decoration = {brace,amplitude=10pt}] (5,-2.75) --  (-3.75,-2.75);
\draw (0.675,-3.5) node {$k-1$};
\coordinate (x1) at ({7+1*cos(-60)},{0.75+0.5*sin(-60)});
\coordinate (x2) at ({7+1*cos(-120)},{0.75+0.5*sin(-120)});
\coordinate (x3) at ({7+1*cos(-90)},{0.75+0.5*sin(-90)});
\coordinate (y1) at ({7+1*cos(60)},{-0.75+0.5*sin(60)});
\coordinate (y2) at ({7+1*cos(120)},{-0.75+0.5*sin(120)});
\coordinate (y3) at ({7+1*cos(90)},{-0.75+0.5*sin(90)});
\foreach \i in {1,2,3}
{
\foreach \j in {1,2,3}
{
\draw (x\i) -- (y\j);
}
}
\draw (-2.75,3) node {$\frac{n-r}{k}$};
\draw (0,3) node {$\frac{n-r}{k}$};
\draw (4,3) node {$\frac{n-r}{k}$};
\draw (9.25,0) node {$\frac{n+(k-1)r}{k}$};
\draw (7,0.75) node {$r$};
\draw (7,-0.75) node {$r$};
\end{tikzpicture}
\caption{The construction showing optimality of Theorem \ref{thm:general}}
\end{figure}

For $k = 1,2$, we can go a step further and determine $f(n,m)$ exactly. This is done in the following two theorems. 
\begin{theorem}\label{thm:k=1}
	Let $n \geq 1$ and $m \leq t_2(n)$. Then $f(n,m) = \min\{r: t_2(r) \geq m\} - 1$. 
	
\end{theorem}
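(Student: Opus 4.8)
The plan is to prove matching upper and lower bounds. Write $r^*=r^*(m):=\min\{r:t_2(r)\ge m\}$, so that the assertion is $f(n,m)=r^*-1$; note that $m\le t_2(n)$ forces $r^*\le n$, and recall $t_2(r)=\lfloor r^2/4\rfloor$.

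\emph{Upper bound.} I would exhibit an $n$-vertex, $m$-edge graph every chordal subgraph of which has at most $r^*-1$ edges. Take a \emph{connected} bipartite graph $B$ on exactly $r^*$ vertices with exactly $m$ edges, and let the construction be $B$ together with $n-r^*$ isolated vertices. Such a $B$ exists because one needs $r^*-1\le m\le t_2(r^*)$, where the right inequality is the definition of $r^*$ and the left follows from the minimality of $r^*$ (which gives $m\ge t_2(r^*-1)+1$) together with the elementary bound $t_2(r-1)+1\ge r-1$. Since a chordal subgraph of a bipartite graph contains no triangle, it contains no cycle, so it is a forest; hence every chordal subgraph of the construction has at most $r^*-1$ edges, giving $f(n,m)\le r^*-1$.

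\emph{Lower bound.} It suffices to prove the clean statement that \emph{every graph $G$ has a chordal subgraph with at least $r^*(e(G))-1$ edges}; applied with $e(G)=m$, and using $r^*(m)\le n$, this yields $f(n,m)\ge r^*-1$. I would prove it by induction on $|V(G)|$. Deleting isolated vertices changes nothing, so assume $G$ has none. If $G$ is disconnected, split it into two nonempty induced parts $G_1,G_2$; by induction each $G_i$ has a chordal subgraph with at least $r^*(e(G_i))-1$ edges, their disjoint union is chordal, and it remains to observe the superadditivity $t_2(a)+t_2(b)\le t_2(a+b-1)$ for $a,b\ge 1$ (equivalently, $\big(r^*(e(G_1))-1\big)+\big(r^*(e(G_2))-1\big)\ge r^*(e(G))-1$), which is a one‑line parity computation. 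So assume $G$ is connected with $n$ vertices and $m$ edges. If $m\le t_2(n)$ then $r^*(m)-1\le n-1$ and a spanning tree of $G$ (which is chordal) already has $n-1$ edges. Hence we may assume $m>t_2(n)$, so that $s:=r^*(m)-1\ge n$ and, by Mantel's theorem, $G$ contains a triangle.

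The heart of the argument is this last case, for which I would take a maximum‑degree vertex $v$ with $\Delta:=\deg_G(v)$. A short count (from $\Delta\ge 2m/n$, $m>t_2(s)$ and $s\ge n$) gives $2\Delta-1\ge s$; the basic gadget is that for any chordal subgraph $H'$ of $G[N(v)]$, adding a new copy of $v$ joined to all of $V(H')$ is again chordal (an induced cycle of length $\ge 4$ through $v$ would have a chord at $v$, and one avoiding $v$ would lie inside $H'$). If $G[N(v)]$ is connected, apply this with $H'$ a spanning tree of $G[N(v)]$ to get a chordal subgraph with $2\Delta-1\ge s$ edges. If $G[N(v)]$ has an isolated vertex $u$, then $\deg_G(u)\le n-\Delta\le\lfloor s/2\rfloor$ (using $\Delta\ge\lfloor s/2\rfloor+1$, $s\ge n$); since $t_2(s)-t_2(s-1)=\lfloor s/2\rfloor$, deleting $u$, applying induction to $G-u$, and re‑attaching $u$ by one pendant edge loses at most one edge against the target, so the result has at least $s$ edges. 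Otherwise $G[N(v)]$ is disconnected with all components of size $\ge 2$, hence with $p\le\Delta/2$ components; joining $v$ to a spanning forest of $G[N(v)]$ and then attaching the vertices of $V\setminus N[v]$ one pendant edge at a time (possible as $G$ is connected) gives a chordal subgraph with $\Delta+n-1-p$ edges, which beats $s$ via $\Delta\ge(s+1)/2$ unless $p$ and $s$ are both large, in which case an edge count (all edges of $G$ lie inside the components of $G[N(v)]$, between $N(v)$ and $V\setminus N[v]$, or at $v$, and every vertex of $V\setminus N[v]$ has degree $\le\Delta$) bounds $m$ small enough that $(\Delta+n-1-p)^2\ge 4(m+1)\ge s^2$. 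Finally, a handful of small $n$ are checked directly.

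The step I expect to be the main obstacle is that last sub‑case: reconciling the ``spanning‑forest‑plus‑pendants'' count $\Delta+n-1-p$ with the exact target $s=r^*(m)-1$ when $G[N(v)]$ breaks into many small components, which forces a delicate simultaneous use of the degree bound at $v$ and an upper bound on $m$, and where the constants must come out exactly rather than up to error terms. By contrast, the reductions — the superadditivity of $t_2$ for the disconnected case, and Mantel's theorem together with the observation that a triangle upgrades a spanning tree to a chordal subgraph with one more edge — should go through cleanly.
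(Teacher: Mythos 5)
Your upper bound is essentially the paper's construction (the paper uses $T_2(r)$ plus $n-r$ isolated vertices and removes surplus edges implicitly; you take a connected bipartite graph on $r^*$ vertices with exactly $m$ edges plus isolated vertices); both are fine. For the lower bound you correctly identify that one should prove the unrestricted statement ``every $G$ has a chordal subgraph with at least $r^*(e(G))-1$ edges'' by induction on the vertex count, but you then take a genuinely different and much heavier route than the paper's, and the route has a real gap.

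The paper's inductive step is a two-case argument based on a \emph{single edge} $xy$ and the quantity $d(x)+d(y)$: if $d(x)+d(y)\geq r^*$, the ``book'' of all edges meeting $\{x,y\}$ is a chordal subgraph with $d(x)+d(y)-1\geq r^*-1$ edges; otherwise one endpoint has degree at most $\lfloor (r^*-1)/2\rfloor$, and deleting it drops $m$ by at most $t_2(r^*-1)-t_2(r^*-2)$, so the induction returns a chordal subgraph of $G-x$ with $\geq r^*-2$ edges and appending the pendant edge $xy$ finishes. This handles all graphs uniformly, with no case split on connectivity, triangles, or the structure of a neighbourhood.

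Your plan instead reduces to connected graphs (via a correct superadditivity of $t_2$), dispatches the regime $m\leq t_2(n)$ by a spanning tree, and in the core regime $m>t_2(n)$ picks a maximum-degree vertex $v$ and splits on the structure of $G[N(v)]$. Your first two sub-cases are sound, but the third — $G[N(v)]$ disconnected with $p\geq 2$ components all of size $\geq 2$ — is not proved: you only assert that ``an edge count\ldots bounds $m$ small enough,'' and you yourself flag it as the main obstacle where ``the constants must come out exactly.'' Moreover, as stated the edge count is incorrect: you write that all edges lie inside components of $G[N(v)]$, between $N(v)$ and $V\setminus N[v]$, or at $v$, but this omits edges with both endpoints in $V\setminus N[v]$, and the subsequent bound is never actually derived. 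Until that sub-case (and the unspecified ``handful of small $n$'') is closed, the lower bound is incomplete. I would encourage you to look for the paper's two-case trick: rather than analysing $N(v)$ for a max-degree $v$, fix an arbitrary edge $xy$, check whether $d(x)+d(y)\geq r^*$, and if not delete the low-degree endpoint and re-attach $x$ by the pendant edge $xy$; this avoids the entire neighbourhood case analysis.
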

For $n \geq 1$ and $m \geq t_2(n)+1$, let $g_2(n,m)$ be the minimum of $2n-t+r$, taken over all pairs $t,r \geq 0$ satisfying $t(n-t) + t_2(r) \geq m$.
\begin{theorem}\label{thm:k=2}
	Let $n \geq 1$ and $t_{2}(n)+1 \leq m \leq t_{3}(n)$. Then $f(n,m) = g_2(n,m)-3$. 
\end{theorem}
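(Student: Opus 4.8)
The plan is to establish the two bounds $f(n,m)\le g_2(n,m)-3$ and $f(n,m)\ge g_2(n,m)-3$ separately; throughout I write $\mathrm{mc}(G)$ for the largest number of edges of a chordal subgraph of $G$. First record one identity. Let $F=\{(t,r):0\le t\le n,\ r\ge 0,\ t(n-t)+t_2(r)\ge m\}$, so $g_2(n,m)=\min_F(2n-t+r)$. Since $t(n-t)$ is invariant under $t\mapsto n-t$, the set $F$ is symmetric under this substitution, whence $\max_F(t-r)=\max_F\big((n-t)-r\big)=n-\min_F(t+r)$. Writing $s^{*}:=\min_F(t+r)$, this gives $g_2(n,m)=2n-\max_F(t-r)=n+s^{*}$, so the target value is $n+s^{*}-3$. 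Moreover, if $(t,r)\in F$ then $r\ge 2$ (else $t_2(r)=0$ forces $t(n-t)\ge m>t_2(n)$, impossible); and the hypothesis $m\le t_3(n)$ gives a pair $(t,t)\in F$ (take $t$ equal to the sum of the two smaller parts of $T_3(n)$; then $t(n-t)+t_2(t)\ge t_3(n)\ge m$ because $t_2(t)\ge$ the product of those two parts). Hence $\max_F(t-r)\ge 0$, i.e.\ $s^{*}\le n$, so the pair attaining $\min_F(t+r)$ satisfies $2\le r\le n-t$.

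For the upper bound I would use the graphs behind Theorem~\ref{thm:general} for $k=2$: given $0\le t$ and $1\le r\le n-t$, let $G_{t,r}$ be the graph on $A\cup B$ with $|A|=t$, $|B|=n-t$, all $A$--$B$ edges, together with a balanced complete bipartite graph on a set $B'\subseteq B$ of $r$ vertices, so $e(G_{t,r})=t(n-t)+t_2(r)$; its only triangles have one vertex in each of $A,B_1',B_2'$, so $G_{t,r}$ is $K_4$-free. The key lemma is $\mathrm{mc}(G_{t,r})\le n+t+r-3$, which I would prove by induction on $n$: take a simplicial vertex $v$ of a chordal subgraph $H$; its $H$-neighbourhood is a clique of $G_{t,r}$, so $d_H(v)\le 2$, and if $d_H(v)=2$ then $\{v\}\cup N_H(v)$ is a triangle of $G_{t,r}$, forcing $v\in A\cup B'$. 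Deleting $v$ leaves a chordal subgraph of a graph of the same type on $n-1$ vertices, with the parameter $t+r$ decreased by one exactly when $d_H(v)=2$; the inductive hypothesis then gives $e(H)=e(H-v)+d_H(v)\le n+t+r-3$ in every case (isolated vertices are handled by the same reduction). Now pick $(t,r)\in F$ with $t+r=s^{*}$; as noted it has $2\le r\le n-t$, so $G_{t,r}$ is defined, $e(G_{t,r})\ge m$, and deleting edges of its bipartite part down to exactly $m$ edges only decreases $\mathrm{mc}$. Hence $f(n,m)\le\mathrm{mc}(G_{t,r})\le n+t+r-3=n+s^{*}-3=g_2(n,m)-3$.

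For the lower bound we must show every $n$-vertex graph $G$ with $m$ edges has a chordal subgraph with at least $n+s^{*}-3$ edges. The construction mirrors the extremal one: grow a maximal $K_4$-free chordal ``core'' $C$ on a vertex set $W$ — starting from a triangle of $G$ (which exists, as $m>t_2(n)$) and repeatedly adding a vertex of $G$ joined to an edge of the current core, so $e(C)=2|W|-3$ — and then attach each remaining vertex of $G$ as a pendant (to a neighbour already present; components of $G$ are treated one at a time). This yields a chordal subgraph with $(2|W|-3)+(n-|W|)=n+|W|-3$ edges, so it suffices to obtain a core with $|W|\ge s^{*}$. The heart of the matter is to prove this is always possible: if no such core exists, one argues that the failure to enlarge a maximal core restricts the edges of $G$ — every vertex outside $W$ sees no edge of $C$, so $W$ plays the role of a vertex cover and the part of $G$ loosely attached to the core is essentially bipartite — and one then compares $e(G)$ with the extremal count $t(n-t)+t_2(r)$ for a partition of $V(G)$ read off from $W$ (the near-vertex-cover giving $t$, and Theorem~\ref{thm:k=1} bounding the residual bipartite part by $t_2(r)$) with $t+r$ not exceeding $|W|$; since $|W|<s^{*}$ this forces $e(G)<m$, a contradiction.

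The principal obstacle is exactly this last step: extracting, \emph{exactly} and not merely up to the $O(\sqrt n)$ slack of Theorem~\ref{thm:general}, a partition of $V(G)$ witnessing $e(G)\le t(n-t)+t_2(r)$ with $t+r<s^{*}$ from the assumption that no core of size $s^{*}$ exists. I expect this to require a stability-type analysis of graphs whose maximum chordal subgraph is near-extremal: one must control the ways the greedy core can fail to grow (in particular vertices with many neighbours in $W$ but none forming a core edge, and the freedom in choosing which core to build), handle the integrality of $t_2(\cdot)$ and the ceiling in the statement, and separately treat the boundary regime $r=n-t$, where $G$ is nearly a Turán tripartite graph and both bounds must be pushed to the exact value $2n-3$. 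These stability and boundary cases are where I expect the real work to lie; the purely greedy argument only gives the bound asymptotically.
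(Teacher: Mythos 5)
Your upper bound is sound and is essentially a mirror image of the paper's: you put the bipartite gadget on the side of size $n-t$ where the paper puts it on the side of size $t$, the two being interchangeable by the symmetry $t\mapsto n-t$ that you exploit to rewrite $g_2(n,m)=n+s^*$. Your inductive proof of $\mathrm{mc}(G_{t,r})\le n+t+r-3$ via a simplicial vertex also works (the paper instead argues directly that the three induced bipartite pieces each contribute a forest), though you should note that the claimed bound is vacuous/wrong for degenerate $n\le 2$, so a base case needs to be stated.

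The lower bound, however, has a structural flaw that is more serious than the stability-analysis gap you flag. The plan is to grow a single $2$-tree core $C$ on $W$ and then hang pendants, and you want $|W|\ge s^{*}$. This is simply false, even if all greedy runs (all starting triangles, all extension orders) are allowed. Take $n=6$, $m=t_2(6)+1=10$, and $G=K_{2,4}$ with sides $\{1,6\}$ and $\{2,3,4,5\}$, plus the two extra edges $23$ and $45$. Here $g_2(6,10)=11$ and $s^{*}=5$, so you need a core on $5$ vertices. But $G$ is $K_4$-free, its only triangles are $123,145,623,645$, and every edge of $G$ lies on at most one triangle through $1$ and at most one through $6$; starting from any of these triangles the only possible extension is to add the opposite hub vertex ($6$ to $23$, say), after which no remaining vertex is adjacent to both endpoints of any core edge. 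Every core has $|W|\le 4$, so the greedy construction yields at most $(2\cdot 4-3)+(6-4)=7$ edges, whereas $\mathrm{mc}(G)=8$: for instance $H=\{12,13,23,14,15,45,46,56\}$ is chordal with $8$ edges. The optimal chordal subgraph here is a union of two $2$-tree blocks glued at a cut vertex (a bowtie $\{1,2,3\}\cup\{1,4,5,6\}$ plus an ear), not a single core with pendants attached; your ansatz cannot produce it. So the lower-bound step is not merely an analytic gap: the claimed structural witness does not exist, and ``stability'' cannot fix this. The paper avoids the issue entirely by a vertex-deletion induction with the strengthened hypothesis that a prescribed triangle must lie in the chordal subgraph (Theorem~\ref{thm:k=2 main}), which lets cores be assembled across more than one block and is crucial precisely in examples like the one above. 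Any repair of your argument would have to allow the chordal subgraph to be a tree of $2$-trees rather than one $2$-tree plus pendants, at which point the bookkeeping essentially becomes the paper's induction.
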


\noindent
The extremal construction for Theorem \ref{thm:k=2} is given by taking a $t \times (n-t)$ complete bipartite graph and placing a complete bipartite graph with $r$ vertices inside the side of size $t$. 

\subsection{Proof ideas}\label{subsec:sketch}
Recall that a graph is chordal if and only if it can be obtained from the one-vertex graph by repeatedly adding {\em simplicial vertices}, i.e. vertices whose neighbourhood is a clique. In particular, adding simplicial vertices to a chordal graph keeps it chordal. We will often use this fact (implicitly) to claim that certain graphs are chordal.   

Let us first recall the argument used by Erd\H{o}s, Gy\'arf\'as, Ordman and Zalcstein \cite{EGOZ} to prove Conjecture \ref{conj:+1} for $k=2$ (and $n$ even). Let $G$ be a graph with $n$ vertices and $n^2/4+1$ edges, and let $x,y,z$ be a triangle in $G$. We need to show that $G$ has a chordal subgraph $H$ with at least $3n/2-1$ edges.
If $d(x) + d(y) + d(z) \geq 3n/2+2$, then take $H$ to be the subgraph consisting of all edges touching $x,y,z$. Suppose now that $d(x) + d(y) + d(z) \leq 3n/2+1$. Then by averaging, we can assume without loss of generality that $d(x) + d(y) \leq n$. Deleting $x,y$, we get a graph with at least $n^2/4+1 - (n-1) \geq (n-2)^2/4+1$ edges. By induction, this graph contains a chordal subgraph $H'$ with at least $3(n-2)/2 - 1$ edges. Adding the edges $xy,xz,yz$ gives the required chordal subgraph $H$. 

Our proof of Theorem \ref{thm:k=2} is also based on this inductive argument, but with two key differences. First, we need a relation between $g_2(n,m)$ and $g_2(n',m')$ (for $n' = n-2$, say), so that the induction can be carried through when deleting vertices. And second, it turns out that the induction scheme of deleting two vertices does not work to give the correct bound on $f(n,m)$ for all $m$ in the range of Theorem \ref{thm:k=2}. Instead, we sometimes need to delete just one vertex and then add two edges when adding the vertex back. To this end, we need to know that the deleted vertex has two neighbours which form an edge in the chordal subgraph $H'$ that we find using induction. To guarantee this, we strengthen the induction hypothesis to say that not only does $G$ contain a chordal subgraph with the correct number of edges, but that any given triangle in $G$ can be included in such a chordal subgraph.

The idea of strengthening the induction hypothesis is also used in the proof of Theorem \ref{thm:k=3}. Here we show that every $K_4$ can be included in a chordal subgraph with the correct number of edges. This proof has a more involved case analysis. It would be interesting to find a shorter proof. 


The proof of Theorem \ref{thm:general} is based on induction as well. Here, instead of deleting only a few vertices, we delete a large number of vertices. To give the general idea, we sketch first the proof in the case $m = t_3(n) + 1$. So let $G$ be a graph with $n$ vertices $t_3(n)+1$ edges. We need to show that $G$ has a chordal subgraph $H$ with at least $\frac{8n}{3} - 6 - C\sqrt{n}$ edges.  
Let us assume first that $e(G) \geq t_3(n) + 2n$. By a theorem of Faudree \cite{Faudree92} (see also \cite{Edwards76,BN}), there is a triangle $x,y,z \in V(G)$ with $d(x) + d(y) + d(z) \geq 6e(G)/n \geq 2n+12$. In particular, $x,y,z$ have at least $12$ common neighbours. Let $w_1,\dots,w_7$ be seven of them. If $d(x)+d(y)+d(z)+d(w_i) \geq \frac{8n}{3} - C\sqrt{n}$ for some $i$, then take $H$ to be the subgraph consisting of all edges touching $x,y,z,w_i$. This $H$ is chordal and $e(H) = d(x)+d(y)+d(z)+d(w_i) - 6$, so we are done. Suppose then that $d(x)+d(y)+d(z)+d(w_i) \leq \frac{8n}{3} - C\sqrt{n}$ for every $i$. In particular, $d(w_i) \leq \frac{2n}{3} - C\sqrt{n}$. Assume that $d(x) \geq d(y) \geq d(z)$, so that $d(x) \geq \frac{2n}{3}$ and hence $d(y) + d(z) + d(w_i) \leq 2n-C\sqrt{n}$ for each $i$. Delete $y,z,w_1,\dots,w_7$ to get a graph $G'$ on $n-9$ vertices. It is easy to see that $e(G')$ is well above $t_3(n-9)+1$. So by the induction hypothesis, there exists a chordal subgraph $H'$ of $G'$ with $e(H') \geq \frac{8(n-9)}{3} - 6 - C\sqrt{n}$ edges. Now add back the vertices $y,z,w_1,\dots,w_7$, and add to $H'$ the edges of the triangle $x,y,z$ and the edges between $x,y,z$ and $w_1,\dots,w_7$. This is a total of $24$ edges. So $e(H) = e(H') + 24 \geq \frac{8n}{3} - 6 - C\sqrt{n}$, as required. It is also easy to see that $H$ is chordal (if we add the new vertices in the order $y,z,w_1,\dots,w_7$, then we always add a simplicial vertex). The number $7$ was chosen here so that the number of edges added would be large enough for the induction to carry through. But the key point is that such a number must exist. Indeed, each $w_i$ contributes $3$ edges to $H$. On the other hand, the term $\frac{8n}{3}$ suggests that it is enough to add $\frac{8}{3}$ edges per vertex on average. So by adding $3$ edges per vertex, we are gaining over the required bound. 

It now remains to handle the case that $e(G) \leq t_3(n) + 2n$. Here we proceed as follows. If the minimum degree of the graph is at least $\frac{2n}{3} - \sqrt{n}$, then take a 4-clique $x,y,z,w$ and take $H$ to be the subgraph consisting of edges touching $x,y,z,w$. Else, delete a vertex of minimum degree and continue with the remaining graph. After $O(\sqrt{n})$ steps, we get a graph with $n' = n - O(\sqrt{n})$ vertices and at least $t_3(n') + 2n'$ edges, so we can apply the first case. 

To prove the general case of Theorem \ref{thm:general}
we find a $(k-1)$-clique $x_1,\dots,x_{k-1}$ and a forest $F$ inside $N(x_1,\dots,x_{k-1})$ such that $F$ has few components. We delete $V(F)$ and $x_2,\dots,x_{k-1}$ and apply induction to find a chordal subgraph $H'$. We then add to $H'$ the edges of the clique $x_1,\dots,x_{k-1}$, the edges of $F$, and the edges between $V(F)$ and $x_1,\dots,x_{k-1}$. Note that when adding back the vertices of $F$ one by one, most vertices contribute $k$ edges: one edge in $F$ and $k-1$ edges to $x_1,\dots,x_{k-1}$ (this fails once for each connected component of $F$, and this is why we want the number of components to be small). On the other hand, the main term in Theorem \ref{thm:general} is $(k-1/k)n$, which suggests that each vertex adds $k-1/k$ edges on average. So again we are gaining over the required bound (at least if we ignore the second term $\sqrt{2(k+1)a/k}$ for the moment). A somewhat lengthy calculation shows that this argument indeed works for any value of $a$. 

\vspace{0.25cm}
The rest of this short paper is organized as follows. 
Theorem \ref{thm:general} is proved in Section \ref{sec:general}, Theorems \ref{thm:k=1}-\ref{thm:k=2} in Section \ref{sec:k=1,2}, and Theorem \ref{thm:k=3} in Section \ref{sec:k=3}.

\section{Proof of Theorem \ref{thm:general}}\label{sec:general}
In this section we prove Theorem \ref{thm:general}.
We begin with the upper bound. Here we use the following construction. For simplicity, assume that $n$ is divisible by $k,k+1$. For general $n$ the construction is essentially the same (and, since we are only interested in an approximate result, we are allowed a small error due to divisibility issues). Fix $k\geq 1$ and $m \leq t_{k+1}(n) = \frac{kn^2}{2(k+1)}$, so that $a := m - t_k(n) \leq \frac{kn^2}{2(k+1)} - \frac{(k-1)n^2}{2k} = \frac{n^2}{2k(k+1)}$.
Set $r := \sqrt{\frac{2ka}{k+1}} \leq \frac{n}{k+1}$. 
Consider a complete $k$-partite graph with sides $X,Y_1,\dots,Y_{k-1}$ such that
$|X| = \frac{n + (k-1)r}{k}$ and $|Y_i| = \frac{n-r}{k}$ for every $1 \leq i \leq k-1$. Place an $r \times r$ complete bipartite graph with sides $A,B$ inside $X$. This is possible as $2r \leq \frac{n + (k-1)r}{k}$. The resulting graph $G$ has 
\begin{eqnarray*}
e(G) &=& (k-1) \cdot \frac{n + (k-1)r}{k} \cdot \frac{n-r}{k} + \binom{k-1}{2} \left( \frac{n-r}{k} \right)^2 + r^2 = \frac{(k-1)n^2}{2k} + \frac{(k+1)r^2}{2k}\\
&=& \frac{(k-1)n^2}{2k}+a=t_k(n)+a=m.
\end{eqnarray*}
Let $H$ be a chordal subgraph of $G$. We have $e_H(A,B) \leq |A| + |B| - 1 = 2r-1$, $e_H(A,Y_i) \leq |A| + |Y_i| - 1$, $e_H(X\setminus A,Y_i) \leq |X| - |A| + |Y_i| - 1$ and $e_H(Y_i,Y_j) \leq |Y_i| + |Y_j| - 1$, because each of these bipartite graphs is induced in $G$, so its intersection with $H$ is a forest. So 
\begin{align*}
e(H) &\leq 2r-1 + (k-1)|X| - 2(k-1) + 2\sum_{i = 1}^{k-1}{|Y_i|} + \sum_{1 \leq i < j \leq k-1}(|Y_i| + |Y_j| - 1) \\ &= 
kn - |X| + 2r - \binom{k+1}{2} = 
(k-1/k)n + \frac{(k+1)r}{k} - \binom{k+1}{2} \\ &= (k-1/k)n + \sqrt{2(k+1)a/k} - \binom{k+1}{2},
\end{align*}
giving the upper bound on $f(n,m)$ for Theorem \ref{thm:general}. We now prove the lower bound, which we restate for convenience as follows. 

\begin{theorem}\label{thm:approximate}
	For every $k \geq 1$ there is $C = C(k)$ such that the following holds. Let $n,a \geq 1$, and let $G$ be a graph with $n$ vertices and at least $t_k(n) + a$ edges. Then $G$ has a chordal subgraph with at least $(k-1/k)n + \sqrt{2(k+1)a/k} - C\sqrt{n} - \binom{k+1}{2}$ edges. 
\end{theorem}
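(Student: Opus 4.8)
The plan is to argue by strong induction on $n$, with $k$ fixed and the constant $C=C(k)$ chosen large only at the end. The base case is $n\le n_0(k)$ for a suitable $n_0(k)$: there $a\le\binom n2$ forces the claimed bound to be at most $O_k(n)$, hence negative once $C$ is large, while $e(G)\ge t_k(n)+a\ge1$ supplies an edge, and a single edge is already a chordal subgraph. So assume $n$ is large, and split the inductive step according to the size of $a$.

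\emph{Sparse regime: $a\le B_kn$} for a suitable constant $B_k=B_k(C)$. Then $\sqrt{2(k+1)a/k}=O_k(\sqrt n)$, so it suffices to find a chordal subgraph with $(k-1/k)n-O_k(\sqrt n)-\binom{k+1}2$ edges. I would repeatedly delete a vertex of minimum degree: as long as the minimum degree of the current graph $G'$ on $n'$ vertices is below $(1-1/k)n'-\sqrt{n'}$, deleting such a vertex makes $e-t_k$ strictly increase, because removing a vertex decreases $t_k$ by $n'-\lceil n'/k\rceil\ge(1-1/k)n'-1$, more than it decreases $e$. Thus the excess grows by at least $\sqrt{n'}-1$ per step, so after $O_k(\sqrt n)$ steps either the excess has reached $B_kn'$ — and we continue in the dense regime — or we reach a graph $G'$ on $n'=n-O_k(\sqrt n)$ vertices with minimum degree at least $(1-1/k)n'-\sqrt{n'}$. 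In the latter case $e(G')>t_k(n')$ gives a $K_{k+1}$ by Tur\'an's theorem, and the subgraph of $G'$ formed by all edges incident to this clique is chordal (add the clique first, then every remaining vertex is simplicial) with at least $(k+1)\bigl((1-1/k)n'-\sqrt{n'}\bigr)-\binom{k+1}2=(k-1/k)n-O_k(\sqrt n)-\binom{k+1}2$ edges.

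\emph{Dense regime: $a>B_kn$.} By Faudree's theorem~\cite{Faudree92} (see also~\cite{Edwards76,BN}) there is a $(k+1)$-clique $Q$ with $\sum_{v\in Q}d(v)\ge\tfrac{2(k+1)}{n}e(G)\ge(k-1/k)n+\Omega_k(B_k)$, so its vertices have many common neighbours. If the edges incident to some $(k+1)$-clique (either $Q$ itself or one obtained from $Q$ by swapping in a common neighbour) form a chordal subgraph with at least the target $(k-1/k)n+\sqrt{2(k+1)a/k}-C\sqrt n-\binom{k+1}2$ edges, we are done. Otherwise I would use $Q$ together with the $a$ extra edges to produce a $(k-1)$-clique $W=\{x_1,\dots,x_{k-1}\}$ and a forest $F$ in the common neighbourhood $G[N(W)]$ such that $|V(F)|$ is of order $n$, $F$ has few components $c$, and the vertices of $V(F)$ have degree close to $(1-1/k)n$ — in short, $W$ and $F$ should mimic the choice ``$W$ = one vertex from each of the $k-1$ small parts, $F$ = spanning forest of the big part'' in the extremal construction of Figure~\ref{fig:construction}. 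I then delete $S:=V(F)\cup\{x_2,\dots,x_{k-1}\}$, so $n'=n-|V(F)|-(k-2)$; the degree control makes $e(G)-e(G')=\sum_{v\in S}d(v)-e(G[S])$ close to $t_k(n)-t_k(n')$, hence the excess $a'=e(G')-t_k(n')$ stays close to $a$ (and in any case $\ge1$), and induction yields a chordal $H'\subseteq G'$. Adding $S$ back, with the $\binom{k-1}2$ edges of $W$, the $|V(F)|-c$ edges of $F$, and the $(k-1)|V(F)|$ edges between $V(F)$ and $W$, gives a chordal $H$ (add $x_2,\dots,x_{k-1}$ first, each simplicial onto $\{x_1,\dots,x_{i-1}\}$, then the vertices of $F$ in an order in which each has at most one earlier $F$-neighbour, so each is simplicial onto $W$ or $W$ plus one vertex), and
\[
e(H)=e(H')+\binom{k-1}2+k|V(F)|-c .
\]
Since $k|V(F)|$ contributes $k$ edges per deleted vertex of $F$ while the main term $(k-1/k)n$ only ``charges'' $k-1/k$ edges per vertex, we gain about $\frac1k|V(F)|$ over the target, to be spent on the $-c$ term, on the constant gap between $\binom{k-1}2$ and $\binom{k+1}2$, and on the change $\sqrt{2(k+1)a/k}-\sqrt{2(k+1)a'/k}$.

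The main obstacle is precisely this last balance. When $a$ is of order $n^2$, the term $\sqrt{2(k+1)a/k}$ is of order $n$, comparable to $\frac1k|V(F)|$, so the slack is tight and one must check the inequality closes for \emph{every} value of $a$; this forces the delicate simultaneous requirements on $W$ and $F$ (a large common-neighbourhood forest, few components, and degrees in $V(F)$ as close to $(1-1/k)n$ as the number of extra edges allows) and a somewhat lengthy computation splitting on the magnitude of $a$. Proving the existence of such $W$ and $F$, and carrying the bookkeeping through uniformly in $a$, is the heart of the argument.
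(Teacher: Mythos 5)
Your proposal correctly identifies the overall strategy — a two-regime induction, with the dense regime deleting a $(k-1)$-clique together with a forest in its common neighbourhood — but this strategy is already spelled out in Section~\ref{subsec:sketch} of the paper, and what your write-up is missing is precisely the technical content that turns the sketch into a proof.

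Concretely, three things are absent or off. First, you never prove that the $(k-1)$-clique $W$ and forest $F$ exist with the stated properties; you only say you ``would use $Q$ together with the $a$ extra edges to produce'' them. The paper does this with two lemmas and a claim that you would need analogues of: Lemma~\ref{lem:k-1 clique} (a greedy maximum-degree process produces $x_1,\dots,x_{k-1}$ with $e(N(x_1,\dots,x_{k-1}))\ge a$ -- note this is proved from scratch, not deduced from Faudree's theorem, although it uses the same Cauchy--Schwarz trick), Lemma~\ref{lem:star or linear forest} (a graph with many edges contains a forest on $s$ vertices with few components), and Claim~\ref{claim:deletion forest}, whose case split -- either some common neighbour has degree $\ge d_0:=\frac{(k-1)n}{k}+\sqrt{2a/(k(k+1))}-c\sqrt n$, in which case one builds a \emph{star}; or all degrees in $N$ are below $d_0$, in which case Lemma~\ref{lem:star or linear forest} applies -- is exactly the mechanism that simultaneously gives few components and the degree bound $\sum_{y\in V(F)}d(y)\le v(F)\cdot d_0+n/k$ that you ask for but do not obtain. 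Second, you propose $|V(F)|=\Theta(n)$, whereas the paper takes $v(F)=\lfloor\sqrt n\rfloor$. This is not a cosmetic difference: with $t\approx\sqrt n$ deleted per step, the slack coming from the $-c\sqrt n$ term in $d_0$ is $\tfrac c2 t\sqrt n=\Theta(cn)$, which comfortably dominates the $O_k(n)$-sized error terms $t^2,h^2,h\sqrt a$ in the final inequality~\eqref{eq:last}; deleting $\Theta(n)$ vertices at once would force you to control the excess $a'$ much more precisely, and you have not verified that the bookkeeping closes. Third, and you say so yourself, the ``somewhat lengthy computation splitting on the magnitude of $a$'' is entirely omitted; in the paper that computation is the chain from~\eqref{eq:H} to~\eqref{eq:last}, and it is where the hypotheses $a\ge(ck+1)^2n$, $t\approx\sqrt n$, and $h=O_k(1)+2n^{3/2}/a$ all get used. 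Your sparse regime is essentially right in spirit (the paper does it more cleanly, deleting one low-degree vertex at a time and appealing to induction on $n$ rather than iterating explicitly), but the dense regime as written is an outline with the hard parts labelled rather than proved.
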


For the proof of Theorem \ref{thm:approximate} we need two lemmas. The following lemma uses an argument originally used by Edwards \cite{Edwards76,Edwards77} and Faudree \cite{Faudree92} (see also \cite{BN}) to find cliques with a large degree sum.  

\begin{lemma}\label{lem:k-1 clique}
	Let $k,n,a \geq 1$ and let $G$ be a graph with $n$ vertices and at least
	$\frac{(k-1)n^2}{2k} + a$ edges.
	Consider the following process: for $i =1,2,\dots$, take $x_i$ to be a vertex of maximum degree among all vertices in $N(x_1,\dots,x_{i-1})$. Then this process continues for at least $k$ steps, and $N(x_1,\dots,x_{k-1})$ contains at least $a$ \nolinebreak edges.
\end{lemma}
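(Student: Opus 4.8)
The plan is to run the greedy process while tracking how far the common neighbourhood sits above the appropriate Tur\'an threshold. Write $S_0=V(G)$ and, for $i\ge 1$, let $x_i$ be a maximum-degree vertex of the current graph $G[S_{i-1}]$ (I read the ``degree'' in the statement as the degree inside $G[N(x_1,\dots,x_{i-1})]$, which is what the argument needs) and set $S_i=S_{i-1}\cap N(x_i)=N_{G[S_{i-1}]}(x_i)$. I would prove, by induction on $i=0,1,\dots,k-1$, that $x_1,\dots,x_i$ are well-defined and
\[
  e\!\left(G[S_i]\right)\;\ge\;\frac{k-1-i}{2(k-i)}\,|S_i|^2+a .
\]
The base case $i=0$ is exactly the hypothesis $e(G)\ge\frac{(k-1)n^2}{2k}+a$. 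For the inductive step, the bound at $i-1$ already gives $e(G[S_{i-1}])\ge a\ge 1$, so $G[S_{i-1}]$ has an edge, hence $S_{i-1}\ne\emptyset$ and $x_i$ is well-defined. Granting the bound at step $i$, the endpoint $i=k-1$ of the induction reads $e(G[S_{k-1}])\ge a$, which is the second assertion of the lemma, and $e(G[S_{k-1}])\ge 1$ also forces $S_{k-1}\ne\emptyset$, so $x_k$ exists and the process runs for at least $k$ steps.

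It remains to carry out the inductive step. Put $H=G[S_{i-1}]$, $N=|S_{i-1}|$, $E=e(H)$, $x=x_i$, $d=d_H(x)=|S_i|$, and $W=S_{i-1}\setminus(\{x\}\cup N_H(x))$, so $|W|=N-1-d$. Every edge of $H$ not lying inside $N_H(x)$ is incident to $x$ or to $W$, so there are at most $d+|W|d=d(N-d)$ of them, using that $d$ is the maximum degree of $H$. Hence $e(G[S_i])\ge E-d(N-d)$. Plugging in $E\ge\frac{(j-1)N^2}{2j}+a$ with $j:=k-i+1\in\{2,\dots,k\}$, and noting $\frac{j-2}{2(j-1)}=\frac{k-1-i}{2(k-i)}$, the desired bound at step $i$ reduces to
\[
  \frac{j-1}{2j}\,N^2-d(N-d)\;\ge\;\frac{j-2}{2(j-1)}\,d^2 .
\]

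I expect this inequality to be the only real content of the proof. Writing $d=cN$ with $c\in[0,1]$ and dividing by $N^2$, it becomes $\frac{j-1}{2j}-c(1-c)-\frac{j-2}{2(j-1)}c^2\ge 0$, and in fact the left-hand side equals $\frac{j}{2(j-1)}\bigl(c-\tfrac{j-1}{j}\bigr)^2$, hence is nonnegative (with equality exactly at $d=\frac{j-1}{j}N$) --- this is just the tightness of the Tur\'an/AM--GM bound at the balanced configuration. Everything else is bookkeeping; the two points to keep an eye on are that the degree of $x_i$ must be measured inside the current subgraph (so that $e(G[S_i])\ge E-d(N-d)$ holds) and that $j\ge 2$ throughout (so that $\tfrac{j-2}{2(j-1)}$ and the division by $j-1$ are legitimate), which is guaranteed since $i$ ranges only up to $k-1$. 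This is the Edwards--Faudree iterated-neighbourhood argument, tuned to produce a dense common neighbourhood rather than a clique of large degree sum.
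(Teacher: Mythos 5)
Your argument is an elegant inductive rephrasing of the Edwards--Faudree idea, and the algebraic identity $\frac{j-1}{2j} - c(1-c) - \frac{j-2}{2(j-1)}c^2 = \frac{j}{2(j-1)}\bigl(c - \frac{j-1}{j}\bigr)^2$ is correct. However, you have --- as you yourself flag --- changed the selection rule: you choose $x_i$ to have maximum degree inside the induced subgraph $G[N(x_1,\dots,x_{i-1})]$, whereas the lemma as used in the paper chooses $x_i$ to have maximum degree in $G$. This is not an innocent relabelling. Your counting step $e(G[S_i]) \ge e(G[S_{i-1}]) - |S_i|\,(|S_{i-1}|-|S_i|)$ requires every vertex of $S_{i-1}$ to have degree at most $|S_i|$ \emph{inside} $G[S_{i-1}]$, which follows from the subgraph-degree rule but not from the $G$-degree one; under the $G$-degree rule a vertex of $W$ can have small $G$-degree while still having almost full degree inside $G[S_{i-1}]$, and then the bound fails.

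The discrepancy matters because the paper's application of the lemma in the proof of Theorem~\ref{thm:approximate} relies on the chain $d_G(y)\le d_G(x_{k-1})\le\cdots\le d_G(x_1)$ for every $y\in N(x_1,\dots,x_{k-1})$, which is automatic from the $G$-degree greedy rule but has no analogue under yours. So you have proved a true statement with the same headline conclusion, but not the version the paper needs. The paper sidesteps the issue by avoiding step-by-step induction: it fixes the whole sequence $x_1,\dots,x_{k-1}$ at once, writes $V(G)=S_1\cup\cdots\cup S_{k-1}\cup N$ where $S_i$ is the set of vertices adjacent to $x_1,\dots,x_{i-1}$ but not to $x_i$, and bounds $\sum_{v\in S}d_G(v)\le\sum_i |S_i|\, d_G(x_i)\le\sum_i |S_i|\,(n-|S_i|)$ using $S_i\subseteq V\setminus N(x_i)$; Cauchy--Schwarz and a one-variable optimization over $|S|$ then finish the proof. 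That argument is robust to the $G$-degree choice precisely because it compares $G$-degrees on both sides of the key inequality, whereas your per-step bound mixes $G$-degree (in the selection) with $H$-degree (in the edge count).
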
 
\begin{proof}
	We prove the lemma by induction on $k$. The base case $k=1$ is trivial. Let $k \geq 2$. By the induction hypothesis, the process continues for at least $k-1$ steps. It remains to show that $N(x_1,\dots,x_{k-1})$ contains at least $a$ edges, because this would also imply that $N(x_1,\dots,x_{k-1}) \neq \emptyset$ and hence the process continues for at least $k$ steps. 
	For $1 \leq i \leq k-1$, let $S_i$ be the set of vertices which are adjacent to $x_1,\dots,x_{i-1}$ but not adjacent to $x_i$. In particular, $S_1$ is just the set of vertices not adjacent to $x_1$ and $x_i \in S_i$ for all $i$. Then $V(G) = S_1 \cup \dots \cup S_{k-1} \cup N(x_1,\dots,x_{k-1})$. 
	Put $S := S_1 \cup \dots \cup S_{k-1}$, $N := N(x_1,\dots,x_{k-1})$, $s_i := |S_i|$, $s = |S|$ and $d_i := d(x_i)$. Note that $s_i \leq n - d_i$. 
	Also, all vertices in $S_i$ have degree at most $d_i$. We have
	\begin{equation}\label{eq:degree sum in A}
	e(N,S) + 2e(S) = \sum_{v \in S}d(v) \leq \sum_{i = 1}^{k-1}{s_i \cdot d_i} \leq \sum_{i=1}^{k-1}{s_i(n-s_i)}.
	\end{equation}
	Since $e(G) = e(N) + e(S) + e(N,S)$, we have $2e(S) = 2e(G) - 2e(N) - 2e(N,S)$. 
	Plugging this into \eqref{eq:degree sum in A} and rearranging, we get
	\begin{equation}\label{eq:e(N)}
	e(N) \geq e(G) - \frac{1}{2}e(N,S) - \frac{1}{2} \cdot \sum_{i = 1}^{k-1}{s_i(n-s_i)}.
	\end{equation}
	We have 
	$e(N,S) \leq |N| \cdot |S| = (n - s)s$. Also, by Cauchy-Schwarz, 
	$$\sum_{i = 1}^{k-1}{s_i(n-s_i)} = ns - \sum_{i=1}^{k-1}{s_i^2} \leq ns - \frac{s^2}{k-1}.
	$$
	Plugging this into \eqref{eq:e(N)} gives
	$$
	e(N) \geq e(G) - \frac{1}{2}(n-s)s - \frac{1}{2}\left( ns - \frac{s^2}{k-1} \right) = e(G) - ns + \frac{ks^2}{2(k-1)}. 
	$$
	The maximum of $ns - \frac{ks^2}{2(k-1)}$ is obtained at $s = \frac{(k-1)n}{k}$ and equals $\frac{(k-1)n^2}{2k}$. Hence, $e(N) \geq e(G) - \frac{(k-1)n^2}{2k} \geq a$, as required.   
\end{proof}
	\begin{lemma}\label{lem:star or linear forest}
		Let $G$ be a graph with $n$ vertices and $a$ edges. Let $s \geq 1$ and suppose that $a \geq 2s^2$. Then $G$ contains a forest $F$ with $s$ vertices and at least $s - 1 - \frac{sn}{a}$ edges. 
	\end{lemma}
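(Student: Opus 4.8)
The plan is to exploit the connected‑component structure of $G$. Write the vertex sets of the connected components of $G$ as $D_1,D_2,\dots,D_q$ with $|D_1|\ge |D_2|\ge\dots\ge|D_q|$, and set $m_i=|D_i|$. Since $a\le\binom n2<n^2/2$ and $a\ge 2s^2$, we get $s<n/2$, so in particular $G$ has at least $s$ vertices and we may let $p$ be the least index with $m_1+\dots+m_p\ge s$. I would then build $F$ by taking spanning trees of $D_1,\dots,D_{p-1}$ together with a subtree of $D_p$ on exactly $s-(m_1+\dots+m_{p-1})$ vertices; this number lies between $1$ and $m_p$ by the minimality of $p$, so such a subtree exists. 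Then $F$ is a forest with exactly $s$ vertices and $s-p$ edges, so the whole task reduces to proving $p-1\le sn/a$. When $p=1$ this is immediate, so assume $p\ge 2$.

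The key observation is that each component with index at least $p$ has size less than $s/(p-1)$: indeed, since the $m_i$ are nonincreasing and $m_1+\dots+m_{p-1}<s$, we have $m_i\le m_p\le m_{p-1}\le\frac{1}{p-1}(m_1+\dots+m_{p-1})<\frac{s}{p-1}$ for every $i\ge p$. Now I would estimate $2a\le\sum_i m_i^2$ (using $e(D_i)\le\binom{m_i}{2}$) and split the sum at $p$. For the head, $\sum_{i\le p-1}m_i^2\le\bigl(\sum_{i\le p-1}m_i\bigr)^2<s^2$; for the tail, $\sum_{i\ge p}m_i^2\le\bigl(\max_{i\ge p}m_i\bigr)\sum_{i\ge p}m_i<\frac{s}{p-1}\cdot n$. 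Hence $2a<s^2+\frac{sn}{p-1}$. Plugging in the hypothesis $a\ge 2s^2$, so that $s^2\le a/2$, yields $\frac{3a}{2}<\frac{sn}{p-1}$, i.e. $p-1<\frac{2sn}{3a}<\frac{sn}{a}$, which is in fact slightly stronger than required.

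I expect the main obstacle to be purely bookkeeping: verifying that the trimmed component $D_p$ really contains enough vertices for the partial subtree while no more than $s$ vertices are used in total, and bounding the two pieces of the decomposition $\sum m_i^2=\sum_{i<p}m_i^2+\sum_{i\ge p}m_i^2$ correctly from the sorting together with $m_1+\dots+m_{p-1}<s$. The step $\sum m_i^2\le\bigl(\sum m_i\bigr)^2$ for the head and the ``$\max\times\text{sum}$'' bound for the tail are both routine once the right decomposition is set up, and the role of the assumption $a\ge 2s^2$ is exactly to absorb the $s^2$ term contributed by the large components into $a$.
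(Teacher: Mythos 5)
Your proof is correct, and the overall strategy matches the paper's: sort the components by size, take spanning trees of the largest components until $s$ vertices are accumulated, and bound the number $p$ of components used. Where you diverge is in the counting step. The paper argues by contradiction: it assumes $\ell > 1 + sn/a$, writes $e(G)\le\binom{r}{2}+\sum_{i\ge\ell}\binom{|C_i|}{2}$ with $r=\sum_{i<\ell}|C_i|$, and then invokes a convexity argument (the sum is maximized when all small components have the extremal size $r/(\ell-1)$) together with some ceiling bookkeeping to reach $e(G)<a$. You instead argue directly: from $2a\le\sum_i m_i^2$, you split the sum into the head $\sum_{i<p}m_i^2\le(\sum_{i<p}m_i)^2<s^2$ and the tail $\sum_{i\ge p}m_i^2<\frac{s}{p-1}\cdot n$ (using that the $(p-1)$st-largest component, and hence all later ones, has size below $s/(p-1)$), then solve the resulting inequality $2a<s^2+\frac{sn}{p-1}$ for $p-1$. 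This avoids the contradiction framing and the convexity/ceiling manipulation entirely, relying only on the elementary estimates $\sum m_i^2\le(\sum m_i)^2$ and $\sum m_i^2\le(\max m_i)(\sum m_i)$, and in fact yields the slightly sharper conclusion $p-1<\frac{2sn}{3a}$. Both proofs need exactly the hypothesis $a\ge 2s^2$ to absorb the $s^2$ contribution from the large components; your version makes that role a bit more transparent.
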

	\begin{proof}
		Let $C_1,\dots,C_m$ be the connected components of $G$ with $|C_1| \geq \dots \geq |C_m|$. Let $\ell \geq 1$ be the minimal integer satisfying $|C_1| + \dots + |C_{\ell}| \geq s$. 
		If $\ell \leq 1 + \frac{sn}{a}$ then take $F$ to be a forest contained in $C_1 \cup \dots \cup C_{\ell}$ having $s$ vertices and $\ell$ connected components. Suppose now by contradiction that $\ell > 1 + \frac{sn}{a}$.
		Set $r = |C_1| + \dots + |C_{\ell-1}|$. 
		Then $r < s$ and $|C_{\ell-1}| \leq \frac{r}{\ell-1}$. We have $e(G) \leq \binom{r}{2} + \sum_{i = \ell}^{m}{\binom{|C_i|}{2}}$.
		By convexity, the sum $\sum_{i = \ell}^{m}{\binom{|C_i|}{2}}$ is maximized when all except maybe one of the $|C_i|$'s are equal to their maximal value, which is $\frac{r}{\ell-1}$. So
		\begin{align*}
		    e(G) \leq \binom{r}{2} + \left\lceil \frac{n-r}{r/(\ell-1)} \right\rceil \cdot \binom{r/(\ell-1)}{2} &\leq 
		\binom{r}{2} + \left( 1 + \frac{n-r}{r/(\ell-1)} \right) \cdot \binom{r/(\ell-1)}{2} \\&\leq 
		\binom{r}{2} + \binom{r/(\ell-1)}{2} + \frac{nr}{2(\ell-1)} \\&<
		2\binom{s}{2} + \frac{sn}{2sn/a} \leq a,
		\end{align*}
		where the last inequality uses $a \geq 2s^2$. We got a contradiction to $e(G) = a$. 
	\end{proof}
\noindent
We are now ready to prove Theorem~\ref{thm:approximate}. An overview of the proof can be found in Section~\ref{subsec:sketch}.

	\begin{proof}[Proof of Theorem \ref{thm:approximate}]
	    The proof is by induction on $n$.
		Fix constants $k \ll c \ll c_1 \ll C$, to be chosen implicitly later. 
		Suppose first that $a \leq (ck + 1)^2n$. In this case we proceed as follows. If $\delta(G) \geq \lfloor \frac{(k-1)n}{k} \rfloor - c_1\sqrt{n}$, then take a $(k+1)$-clique $x_1,\dots,x_{k+1} \in V(G)$ and take $H$ to consist of all edges that touch $x_1,\dots,x_{k+1}$. Then $H$ is chordal and 
		\begin{align*}
		e(H) &= \sum_{i=1}^{k+1}{d(x_i)} - \binom{k+1}{2} \geq (k+1) \cdot \left( \frac{(k-1)n}{k} - 2c_1\sqrt{n} \right) - \binom{k+1}{2} \\ &= (k-1/k)n - 2(k+1)c_1\sqrt{n} - \binom{k+1}{2} \geq (k-1/k)n + \sqrt{2(k+1)a/k} - C\sqrt{n} - \binom{k+1}{2},
		\end{align*}
		where the last inequality holds as $C \gg c_1,c$ and $a \leq (ck + 1)^2n$. Suppose now that there is $v \in V(G)$ with $d(v) \leq \lfloor \frac{(k-1)n}{k}\rfloor - c_1\sqrt{n}$. Let $G' = G - v$. Then 
        $$e(G') \geq t_k(n) + a - \left\lfloor \frac{(k-1)n}{k}\right\rfloor + c_1\sqrt{n} 
		= t_k(n-1) + a + c_1\sqrt{n}.
        $$
        By the induction hypothesis with parameter $a' = a + c_1\sqrt{n}$, $G'$ contains a chordal subgraph $H'$ with 
		$$
        e(H') \geq (k - 1/k)(n-1) + \sqrt{2(k+1)a'/k} - C\sqrt{n} - \binom{k+1}{2}.$$ 
        As $(k-1/k)(n-1) \geq (k-1/k)n - k$, it suffices to show that $\sqrt{2(k+1)a'/k} \geq \sqrt{2(k+1)a/k} + k$. Squaring and plugging in the value of $a'$, we get 
        $$2(k+1)/k \cdot (a+c_1\sqrt{n}) = 2(k+1)a'/k \geq 2(k+1)a/k + 2k\sqrt{2(k+1)a/k} + k^2.$$ Cancelling the term $2(k+1)a/k$ from both sides and rearranging, we see that it is enough to have 
        $$c_1\sqrt{n} \geq  \frac{k^2}{k+1}\sqrt{2(k+1)a/k} + \frac{k^3}{2(k+1)},$$ which holds because $a \leq (ck + 1)^2n$ and $c_1 \gg c$. 
		
		For the rest of the proof we assume that $a \geq (ck + 1)^2n$. 
		Note that 
		$e(G) \geq t_k(n) + a \geq \frac{(k-1)n^2}{2k} + \frac{a}{2}$ because $t_k(n) \geq \frac{(k-1)n^2}{2k} - O_k(1)$ and $a \geq c \gg k$.
		Let $x_1,\dots,x_{k-1}$ be as in Lemma \ref{lem:k-1 clique} and put $N = N(x_1,\dots,x_{k-1})$. By Lemma \ref{lem:k-1 clique} we have $e(N) \geq \frac{a}{2}$. Also, the choice of $x_1,\dots,x_{k-1}$ in Lemma \ref{lem:k-1 clique} implies that
		$d(y) \leq d(x_{k-1}) \leq \dots \leq d(x_1)$ for every $y \in N$. For convenience, we set
		$$
		d_0 := \frac{(k-1)n}{k} + \sqrt{\frac{2a}{k(k+1)}} - c\sqrt{n}.
		$$
    Note that $d_0 \geq \frac{(k-1)n}{k}$ by our assumption that $a \geq (ck + 1)^2n$. 
		\begin{claim}\label{claim:deletion forest}
			If the statement of the theorem does not hold, then $G[N]$ contains a forest $F$ with $v(F) = \lfloor \sqrt{n} \rfloor$, $e(F)\geq v(F) - 1 - \frac{2n^{3/2}}{a}$, and 
			\begin{equation}\label{eq:forest degree bound}
			\sum_{y \in V(F)}{d(y)} \leq v(F) \cdot d_0 + n/k.
			\end{equation}  
		\end{claim}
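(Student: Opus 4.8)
The plan is to use the assumed failure of Theorem~\ref{thm:approximate} to force all but a few vertices of $N$ to have degree below $d_0$, and then to build $F$ essentially inside the set $L$ of those low‑degree vertices, using Lemma~\ref{lem:star or linear forest} to control the number of components.

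Put $t:=\lfloor\sqrt n\rfloor$, $M:=\{y\in N:d(y)\ge d_0\}$ and $L:=N\setminus M$. A direct computation gives $(k+1)d_0=(k-1/k)n+\sqrt{2(k+1)a/k}-(k+1)c\sqrt n$, so the bound in Theorem~\ref{thm:approximate} that we assume to fail equals $(k+1)d_0-(C-(k+1)c)\sqrt n-\binom{k+1}{2}$. \emph{Step 1 ($M$ is independent):} for any edge $yz$ of $G[N]$ the set $\{x_1,\dots,x_{k-1},y,z\}$ is a $(k+1)$‑clique, and since the subgraph of all edges meeting it is chordal, failure of the theorem gives $\sum_{i=1}^{k-1}d(x_i)+d(y)+d(z)<(k+1)d_0-(C-(k+1)c)\sqrt n$. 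As $d(x_{k-1})\ge\max\{d(y),d(z)\}$ by the choice of the $x_i$ in Lemma~\ref{lem:k-1 clique}, we have $\sum_{i=1}^{k-1}d(x_i)\ge\tfrac{k-1}{2}(d(y)+d(z))$, and substituting yields $d(y)+d(z)<2d_0$ (using $C\gg c$); hence $M$ is independent and every edge of $G[N]$ meets $L$. \emph{Step 2 ($|M|\le n/k$):} every neighbour of $m\in M$ lies in $L$ (those inside $N$, by Step 1) or outside $N$, so $d(m)\le|L|+(n-|N|)=n-|M|$, whence $|M|\le n-d(m)\le n-d_0\le n/k$. \emph{Step 3 ($e(G[N])\ge\tfrac9{10}a$):} this follows from $e(G)\ge t_k(n)+a\ge\tfrac{(k-1)n^2}{2k}+\tfrac9{10}a$ (as $t_k(n)\ge\tfrac{(k-1)n^2}{2k}-O_k(1)$ and $a\ge(ck+1)^2n$ is large) together with Lemma~\ref{lem:k-1 clique}.

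\emph{Step 4 (reduction):} it suffices to find a forest $F\subseteq G[N]$ with $v(F)=t$, at most one vertex in $M$, and at most $1+\tfrac{2n^{3/2}}{a}$ components. The last requirement is exactly $e(F)\ge v(F)-1-\tfrac{2n^{3/2}}{a}$, and if $|V(F)\cap M|\le1$ then, since $d(\ell)<d_0$ for $\ell\in L$, we get $\sum_{y\in V(F)}d(y)\le(t-1)d_0+(n-1)\le td_0+(n-d_0)\le td_0+n/k$, which is \eqref{eq:forest degree bound}.

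\emph{Step 5 (building $F$):} if $|L|\ge t$ and $e(G[L])\ge\max\{2t^2,\tfrac{t|L|a}{2n^{3/2}}\}$, apply Lemma~\ref{lem:star or linear forest} to $G[L]$ with $s=t$ to obtain a forest on $t$ vertices of $L$ with at least $t-1-\tfrac{t|L|}{e(G[L])}\ge t-1-\tfrac{2n^{3/2}}{a}$ edges and no $M$‑vertex, and Step~4 concludes. Otherwise $e(G[L])<\max\{2n,\tfrac a2\}\le\tfrac a2$ (the last step since $a\ge(ck+1)^2n$ with $c$ large), so by Step~3 $e(L,M)\ge\tfrac9{10}a-\tfrac a2=\tfrac{2a}{5}$, and since $|M|\le n/k$ by Step~2 some $m^\ast\in M$ has $d^\ast:=d_{G[L]}(m^\ast)\ge\tfrac{2a/5}{n/k}=\tfrac{2ka}{5n}$. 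If $d^\ast\ge t-1$, take $F$ to be the star with centre $m^\ast$ and $t-1$ leaves among its $L$‑neighbours, a tree on $t$ vertices with a single $M$‑vertex, and apply Step~4. If $d^\ast<t-1$, then $a<\tfrac{5(t-1)n}{2k}=O_k(n^{3/2})$, so $\tfrac{2n^{3/2}}{a}>\tfrac{4k}{5}$ and the component budget is large; here we form $F$ from $m^\ast$ together with a forest in $G[L]$ chosen with as few components as possible (by Lemma~\ref{lem:star or linear forest} when $e(G[L])\ge2(t-1)^2$, and directly from the component structure of $G[L]$ otherwise), joining $m^\ast$ to one vertex in as many of those components as possible (at most $d^\ast$) and padding with isolated $L$‑vertices up to $t$ vertices; the component bound then follows from a slightly lengthy estimate using $|M|\le n/k$, $e(N)\ge\tfrac9{10}a$ and $d^\ast\ge\tfrac{2ka}{5n}$.

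In the last case $F$ meets $M$ only in $m^\ast$ provided $|L|\ge t-1$, and a short argument of the same flavour as Step~1 shows $|L|\ge t-1$ whenever $a$ is not $O_k(n^{3/2})$ (if $|L|\le t-2$, some $\ell\in L$ has $d_{G[N]}(\ell)\ge e(N)/|L|$, and then $\tfrac9{10}a/|L|\le d(\ell)<d_0$ forces $a$ to be small). The residual case $|L|<t-1$—in which $a=O_k(n^{3/2})$ and, correspondingly, $|M|$ is comparatively close to $n/k$—is the genuinely delicate one: then $F$ is forced to meet $M$ in more than one vertex, and one must invoke the sharper bound $d(m)\le n-|M|$ from Step~2 to keep \eqref{eq:forest degree bound} valid. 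I expect Step~1 to be the conceptual heart of the proof, and this final sub‑case together with the component estimate when $d^\ast<t-1$ to carry the bulk of the technical work (matching the paper's own remark that the overall argument needs "a somewhat lengthy calculation").
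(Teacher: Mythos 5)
Your proof takes a genuinely different route from the paper's, via the decomposition $N = M \cup L$, but it is not complete: the heavy lifting in subcase~$d^\ast < t-1$ and the final $|L|<t-1$ case is left as an acknowledged sketch, and both are real gaps, not routine calculations.

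The underlying structural issue is that your approach forces the leaves of the star to lie in $L$ (low‑degree vertices of $N$), and the best you can extract for the centre $m^\ast$ is $d_{G[L]}(m^\ast)\ge \frac{2ka}{5n}$, which for $a=\Theta_k(n)$ is only a constant. The paper avoids this entirely with a much sharper observation. Pick \emph{any} $x_k\in N$ with $d(x_k)\ge d_0$. Because $x_1,\dots,x_{k-1}$ were chosen greedily by degree, $d(x_i)\ge d(x_k)\ge d_0$ for every $i$, so the $k$-clique $x_1,\dots,x_k$ has degree sum $\ge kd_0$ and hence $\ge kd_0-(k-1)n = \sqrt{2ka/(k+1)}-ck\sqrt n \ge \sqrt n$ common neighbours. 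These common neighbours live in $N$, but the paper does \emph{not} ask that they lie in $L$. Instead, it applies exactly your Step~1 reasoning to the clique $\{x_1,\dots,x_k,y\}$: if $d(y)\ge d_0$ then the edges touching this $(k+1)$-clique already beat the required bound, contradicting the assumed failure of the theorem. So every common neighbour $y$ has $d(y)\le d_0$. Taking the star centred at $x_k$ with $\lfloor\sqrt n\rfloor-1$ of these as leaves gives $e(F)=v(F)-1$ and $\sum d \le d(x_k)+(v(F)-1)d_0 \le v(F)d_0 + n/k$ immediately. If no such $x_k$ exists, every $y\in N$ has $d(y)\le d_0$ and one applies Lemma~\ref{lem:star or linear forest} to $G[N]$ directly (using $e(N)\ge a/2$ and $|N|\le n$), which is your subcase~5a without the side‑conditions. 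That two‑case split is exhaustive, so none of the machinery in your Steps~2, 3, 5b needs to exist.

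So the concrete missing idea is: when $M\neq\emptyset$, don't estimate $e(L,M)$ and hunt for an $m^\ast$ with many $L$-neighbours; instead, count \emph{common neighbours of $x_1,\dots,x_k$} via the degree sum of the clique, and prove they all have degree $\le d_0$ by the same "else the theorem holds" move you already used to prove $M$ is independent. Your Step~1 has the right key observation, aimed at the wrong $(k+1)$-clique. As written, your subcases $d^\ast<t-1$ and $|L|<t-1$ are not proved, and those are precisely the cases the paper's cleaner split makes vanish.
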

		\begin{proof} 
			We consider two cases. Suppose first that there is $x_k \in N$ such that $d(x_k) \geq d_0$. 
			Then $d(x_i) \geq d_0$ for every $1 \leq i \leq k-1$. Hence, $d(x_1) + \dots + d(x_k) \geq k \cdot d_0$. This means that $x_1,\dots,x_k$ have at least 
			$$
			kd_0 - (k-1)n = \sqrt{\frac{2ka}{(k+1)}} - ck\sqrt{n} \geq 
			\sqrt{a} - ck\sqrt{n} \geq \sqrt{n}
			$$
			common neighbours, where the last inequality holds by the assumption $a \geq (ck + 1)^2n$. 
			Take $F$ to be the star whose center is $x_k$ and whose leaves are $\lfloor \sqrt{n} \rfloor - 1$ common neighbours of $x_1,\dots,x_k$.  
			Let $y \in N(x_1,\dots,x_k)$. If $d(y) \geq d_0$ then 
			$$
			d(x_1) + \dots + d(x_{k}) + d(y) \geq (k+1)d_0 = (k-1/k)n + \sqrt{2(k+1)a/k} - (k+1)c\sqrt{n},
			$$
			and then the subgraph consisting of all edges touching $\{x_1,\dots,x_k,y\}$ is a chordal graph with at least $(k- \nolinebreak 1/k)n + \sqrt{2(k+1)a/k} - (k+1)c\sqrt{n} - \binom{k+1}{2}$ edges, so the assertion of the theorem holds. Hence, we may assume that $d(y) \leq d_0$ for every $y \in N(x_1,\dots,x_k)$. This means that 
			$$
			\sum_{v \in V(F)}{d(v)} \leq d(x_k) + (v(F) - 1) \cdot d_0 \leq n/k + v(F) \cdot d_0,
			$$
			as required by the claim. Also, $F$ has the right number of edges, as $e(F)=v(F)-1$.
			
			The second case is that $d(y) \leq d_0$ for every $y \in N$. Since $e(N) \geq \frac{a}{2} \geq 2n$, we can apply Lemma \ref{lem:star or linear forest} to $G[N]$ with parameters $\frac{a}{2}$ and $s = \lfloor \sqrt{n} \rfloor$ to obtain a forest $F$ with $\lfloor \sqrt{n} \rfloor$ vertices and at least $v(F) - 1 - \frac{2n^{3/2}}{a}$ edges. All vertices in $F$ have degree at most $d_0$, so \eqref{eq:forest degree bound} holds. 
		\end{proof}
		We continue with the proof of the theorem. Let $F$ be the forest given by Claim \ref{claim:deletion forest}. Let $G'$ be the graph obtained from $G$ by deleting the $t := k-2 + v(F)$ vertices $T := \{ x_2,\dots,x_{k-1} \} \cup V(F)$. By \eqref{eq:forest degree bound}, we have 
		$$
		\sum_{v \in T}{d(v)} \leq d(x_2) + \dots + d(x_{k-1}) + v(F) \cdot d_0 + n/k \leq (v(F) + k - 2) \cdot d_0 + n = t \cdot d_0 + n,
		$$  
        where the second inequality uses that $d_0 \geq \frac{(k-1)n}{k}$.
		As $e(G) \geq t_k(n) + a \geq \frac{(k-1)n^2}{2k} - O_k(1) + a$, we have that
		\begin{align*}
		e(G') \geq e(G) - t \cdot d_0 - n &\geq \frac{(k-1)n^2}{2k}- O_k(1) + a - t \cdot d_0 - n  \\ &= 
		\frac{(k-1)(n-t)^2}{2k} + \frac{(k-1)nt}{k} - \frac{(k-1)t^2}{2k} - O_k(1) + a - t \cdot d_0 - n \\ &=   
		\frac{(k-1)(n-t)^2}{2k} - O_k(1) + a - \frac{(k-1)t^2}{2k} - 
		t \cdot \sqrt{\frac{2a}{k(k+1)}} + ct\sqrt{n} - n \\ &\geq
		t_k(n-t) + a - \frac{(k-1)t^2}{2k} - 
		t \cdot \sqrt{\frac{2a}{k(k+1)}} + \frac{c}{2}t\sqrt{n},
		\end{align*}
		where the last inequality uses that $t \geq \lfloor \sqrt{n} \rfloor$ and $c \gg k$, so that $\frac{c}{2}t\sqrt{n} \geq O_k(1) + n$. 
		Set 
		\begin{equation*}\label{eq:r'}
		a' := a - \frac{(k-1)t^2}{2k} - t \cdot \sqrt{\frac{2a}{k(k+1)}} + \frac{c}{2}t\sqrt{n},
		\end{equation*}
		so that $e(G') \geq t_k(n-t)+a'$. 
		We have $a' \geq 1$ because $t \leq \sqrt{n}+k-2$, $a \geq c^2n$ (say) and $c \gg k$. By the induction hypothesis, $G'$ contains a chordal subgraph $H'$ of size at least
		$$
		e(H') \geq (k-1/k) \cdot (n-t) +  \sqrt{2(k+1)a'/k} - C\sqrt{n-t} - \binom{k+1}{2}.
		$$
		Let $H$ be the subgraph of $G$ obtained by adding to $H'$ the edges of the clique $x_1,\dots,x_{k-1}$, the edges between $x_1,\dots,x_{k-1}$ and $V(F)$, and the edges of $F$. Then $H$ is chordal.
		To complete the proof, it suffices to verify \nolinebreak that 
        $$e(H) \geq (k-1/k)n + \sqrt{2(k+1)a/k} - C\sqrt{n} - \binom{k+1}{2}.$$
		By the definition of $H$, we have 
        $$e(H) = e(H') + \binom{k-1}{2} + (k-1) \cdot v(F) + e(F).$$
        Note that 
        $$(k-1) \cdot v(F) + e(F) \geq k \cdot v(F) - 1 - \frac{2n^{3/2}}{a} = k \cdot (t - k + 2) - 1 - \frac{2n^{3/2}}{a},$$ and therefore
		$$\binom{k-1}{2} + (k-1) \cdot v(F) + e(F) \geq tk - \frac{(k+1)(k-2)}{2} - 1  - \frac{2n^{3/2}}{a}.$$ For convenience, set $h := \frac{(k+1)(k-2)}{2} + 1  + \frac{2n^{3/2}}{a}$. Then
		\begin{equation}\label{eq:H}
		e(H) \geq (k-1/k) \cdot (n-t) +  \sqrt{2(k+1)a'/k} - C\sqrt{n-t} - \binom{k+1}{2} + tk - h.
		\end{equation}
		So it remains to verify that the right-hand side of \eqref{eq:H} is at least as large as 
        $$(k-1/k)n + \sqrt{2(k+1)a/k} - C\sqrt{n} - \binom{k+1}{2}.$$ 
        Cancel the terms $(k-1/k)n$, $\binom{k+1}{2}$ which appear in both expressions. Also, we may drop the terms $C\sqrt{n-t}$, $C\sqrt{n}$. After rearranging, we get the inequality
		$$
		\sqrt{2(k+1)a'/k} \geq \sqrt{2(k+1)a/k} - \frac{t}{k} + h.
		$$
		By squaring and plugging in the value of $a'$, we get:
		\begin{equation}\label{eq:induction inequality 2}
		\begin{split}
		& \frac{2(k+1)}{k} \cdot \left( a - \frac{(k-1)t^2}{2k} - t \cdot \sqrt{\frac{2a}{k(k+1)}} + \frac{c}{2}t\sqrt{n} \right) \geq \\ &\frac{2(k+1)a}{k} + \frac{t^2}{k^2} + h^2 - \frac{2t}{k} \cdot \sqrt{\frac{2(k+1)a}{k}} + 2h \cdot \sqrt{\frac{2(k+1)a}{k}} - \frac{2th}{k}.
		\end{split}
		\end{equation}
		Both sides of the inequality \eqref{eq:induction inequality 2} have the terms $\frac{2(k+1)}{k}a$ and $-\frac{2t}{k} \cdot \sqrt{\frac{2(k+1)a}{k}}$. We can also drop the negative term $\frac{2th}{k}$ on the right-hand side. After rearranging, we get the inequality 
		\begin{equation}\label{eq:last}
		\frac{2(k+1)}{k} \cdot \frac{c}{2}t\sqrt{n} \geq t^2 + h^2 + 2h \cdot \sqrt{\frac{2(k+1)a}{k}}.
		\end{equation}
		We have $t \leq \sqrt{n} + k$ and $h \leq O_k(1) + \frac{2n^{3/2}}{a} \leq O_k(1) + \sqrt{n}$, so $t^2,h^2 \leq O_k(n)$. Also, 
        $$h \sqrt{a} \leq \left(O_k(1) + \frac{2n^{3/2}}{a}\right) \cdot \sqrt{a} \leq O_k(n) + \frac{2n^{3/2}}{\sqrt{a}} = O_k(n),$$ as $n \leq a \leq n^2$. So the right-hand side of \eqref{eq:last} is $O_k(n)$. On the other hand, the left-hand side is larger than $\frac{cn}{2}$ because $t \geq \lfloor \sqrt{n}\rfloor \geq \sqrt{n}/2$. So \eqref{eq:last} holds because $c \gg k$, as required.  
	\end{proof}
	
\section{Proof of Theorems \ref{thm:k=1} and \ref{thm:k=2}}\label{sec:k=1,2}
\begin{proof}[Proof of Theorem~\ref{thm:k=1}]
For the upper bound, let $r \geq 1$ be the minimal integer satisfying $t_2(r) \geq m$, and take $G$ to be $T_2(r)$ with $n-r$ isolated vertices. Then $e(G) \geq m$, but every chordal subgraph of $G$ has at most $r-1$ edges. 
For the lower bound, we prove by induction on the number of vertices that every graph $G$ with $m$ edges has a chordal subgraph $H$ with at least $g_1(m)-1$ edges, where $g_1(m) := \min_{r: t_2(r) \geq m} r$. For $m = 0$, the assertion is trivial. Suppose $m \geq 1$ and let $xy \in E(G)$. 
Fix $r$ such that $g_1(m) = r$.
If $d(x) + d(y) \geq r$ then take $H$ to be the subgraph consisting of all edges touching $x,y$. This graph is chordal and has $d(x)+d(y)-1$ edges. Suppose now that $d(x) + d(y) \leq r-1$; without loss of generality, $d(x) \leq \lfloor \frac{r-1}{2} \rfloor$. Let $G' = G-x$. Then $e(G') = m - d(x) \geq m - \lfloor \frac{r-1}{2} \rfloor$. We claim that $e(G') > t_2(r-2)$. Indeed, if $e(G') \leq t_2(r-2)$ then $m \leq t_2(r-2) + \lfloor \frac{r-1}{2} \rfloor = t_2(r-1)$, in contradiction to the choice of $r$. So $g_1(e(G')) \geq r-1$. By the induction hypothesis, $G'$ contains a chordal subgraph $H'$ with at least $r-2$ edges. Now, $H' + \{xy\}$ is a chordal subgraph of $G$ with at least $r-1$ edges, as required.  
\end{proof}

The rest of this section is dedicated to proving Theorem~\ref{thm:k=2}. 
Recall that for $n \geq 1$ and $m \geq t_2(n) + 1$, we define 
$
g_2(n,m) := 
\min_{t,r}(2n-t+r),
$
where the minimum is taken over all integers $t,r \geq 0$ satisfying $t(n-t)+t_2(r) \geq m$.
We start by proving the upper bound in Theorem \ref{thm:k=2}.
First we claim that $g_2(n,m) \leq 2n$. 
Indeed, for $t = r = \lceil \frac{2n}{3} \rceil$ we have $t(n-t) + t_2(r) = t_3(n) \geq m$, so $g_2(n,m) \leq 2n - t + r = 2n$, as required.
Now take $t,r$ such that $t(n-t) + t_2(r) \geq m$ and $g_2(n,m) = 2n-t+r$. Since $g_2(n,m) \leq 2n$, we have $t \geq r$. Take a complete bipartite graph with sides $X$ of size $t$ and $Y$ of size $n-t$, and add a copy of $T_2(r)$ with sides $A,B$ inside $X$. The resulting graph $G$ has $t(n-t) + t_2(r) \geq m$ edges. Let $H$ be a chordal subgraph of $G$. Then $e_H(A,B) \leq |A|+|B|-1 = r-1$, $e_H(A,Y) \leq |A|+|Y|-1$ and $e_H(X\setminus A,Y) \leq |X| - |A| + |Y| - 1$, because each of these bipartite graphs is induced in $G$, so its intersection with $H$ is a forest. Overall, we got that $e(H) \leq r-1 + |X| + 2|Y| - 2 = 2n - t + r - 3 = g_2(n,m) - 3$. This shows that $f(n,m) \leq g_2(n,m)-3$, as required. To prove the lower bound in Theorem \ref{thm:k=2}, we prove the following stronger claim. 
%

\begin{theorem}\label{thm:k=2 main}
	Let $G$ be a graph with $n$ vertices and $m \geq t_2(n)+1$ edges, and let $x,y,z$ be a triangle in $G$. Then $G$ has a chordal subgraph with at least $g_2(n,m)-3$ edges which contains the edges $xy,xz,yz$.  
\end{theorem}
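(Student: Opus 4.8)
The plan is to prove Theorem~\ref{thm:k=2 main} by induction on $n$, following the scheme of Erd\H{o}s, Gy\'arf\'as, Ordman and Zalcstein recalled in Section~\ref{subsec:sketch}. The point of the strengthened conclusion---that the \emph{given} triangle extends to an optimal chordal subgraph---is that it keeps the induction alive under vertex deletion: whenever we delete one or two vertices and land in a graph $G'$ with more than $t_2(|V(G')|)$ edges, Mantel's theorem produces a triangle in $G'$ that we may feed to the induction hypothesis. Before the main argument I would record a few elementary properties of $g_2$: it is nondecreasing in $m$; one always has $g_2(n,m)\le 2n$ (witnessed by $t=r=\lceil 2n/3\rceil$), so every optimal pair $(t,r)$ has $t\ge r$; and, most importantly, deleting vertices moves $g_2$ only a little. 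Taking an optimal pair $(t',r')$ of a smaller instance and nudging it (raising $t'$, and, in the regime where $t'\approx r'$, also $r'$) yields bounds of the shape $g_2(n-1,m-d)\ge g_2(n,m)-2$ and $g_2(n-2,m-d+1)\ge g_2(n,m)-3$, valid whenever $d$ does not exceed a threshold of order $n$. These are the relations between $g_2(n,m)$ and $g_2(n',m')$ alluded to in Section~\ref{subsec:sketch}; they are routine but require care with floors and with which coordinate to nudge.

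Now fix $G$ with $m\ge t_2(n)+1$ and a triangle $x,y,z$. If $d(x)+d(y)+d(z)\ge g_2(n,m)$, let $H$ be all edges of $G$ incident to $\{x,y,z\}$; since $\{x,y,z\}$ is a clique, the remaining vertices can be added one by one as simplicial vertices, so $H$ is chordal, contains $xy,xz,yz$, and $e(H)=d(x)+d(y)+d(z)-3\ge g_2(n,m)-3$. Otherwise $d(x)+d(y)+d(z)\le g_2(n,m)-1\le 2n-1$, and I would split according to the minimum pairwise degree sum among $x,y,z$.

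If some pair, say $\{y,z\}$, has $d(y)+d(z)$ small (at most about $n$; by averaging this is forced whenever $g_2(n,m)$ is not too close to $2n$, i.e.\ $m$ not too close to $t_3(n)$), I delete $y$ and $z$. Then $G'=G-\{y,z\}$ has $n-2$ vertices and $m'=m-d(y)-d(z)+1\ge t_2(n-2)+1$ edges, so by Mantel $G'$ has a triangle; the induction hypothesis gives a chordal $H'\subseteq G'$ with $e(H')\ge g_2(n-2,m')-3$, and then $H:=H'\cup\{x,y,z,xy,xz,yz\}$ is chordal (adding $x,y,z$ in this order, each is simplicial) with $e(H)=e(H')+3\ge g_2(n-2,m')\ge g_2(n,m)-3$ by the recursion. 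Otherwise all three pairwise sums exceed $n$; this is the regime where two deletions lose too much, and here I delete a \emph{single} vertex and reattach it to two neighbours that form an edge of the chordal subgraph coming from induction. Concretely, one takes $v=z$ of smallest degree among $x,y,z$, a common neighbour $w$ of $x$ and $y$ distinct from $z$ (available once $d(x)+d(y)\ge n+2$), passes the triangle $x,y,w$ down to $G'=G-z$, and reattaches $z$ along the edge $xy\in E(H')$, gaining two edges; then $e(H)=e(H')+2\ge g_2(n-1,m-d(z))-1\ge g_2(n,m)-3$ by the one-vertex recursion. One may also, when available, instead delete a low-degree vertex outside the triangle adjacent to two of $x,y,z$, keeping the original triangle.

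The step I expect to be the real obstacle is this last (one-vertex) case: one must choose the vertex to delete so that \emph{simultaneously} the reattaching edge lies in $H'$, the graph $G-v$ still has more than $t_2(n-1)$ edges (needed to apply the induction hypothesis), and $d(v)$ is below the threshold of the one-vertex recursion lemma---and then check that these constraints are compatible with being in the ``all pairwise sums $>n$'' regime (which may force a further sub-case split, e.g.\ according to whether such a low-degree vertex outside the triangle exists). The tightest point is a balanced triangle with all three degrees close to $2n/3$, occurring for $m$ near $t_3(n)$; there the $g_2$-recursion must be proved with essentially no slack, which is exactly why the precise way of nudging the optimal pair matters. Finally, a bounded number of small configurations (for instance $m$ within $O(1)$ of $t_2(n)$, or $G-v$ being a Tur\'an graph) will have to be dealt with directly; these do not affect the main line of argument.
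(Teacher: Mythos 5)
Your plan reproduces the paper's scheme faithfully---strengthened induction carrying a distinguished triangle, the star-at-triangle case when $d(x)+d(y)+d(z)\ge g_2(n,m)$, a two-vertex deletion when some pair has degree sum $\le n$, and otherwise a one-vertex deletion that passes down a triangle through the surviving edge---and you correctly identify both places where the argument is delicate. But neither of the two places you flag is actually resolved, and both are genuine obstacles rather than bookkeeping. The first is the boundary sub-case where all pairwise degree sums are at least $n+1$ but some pair, say $x,y$, has $d(x)+d(y)=n+1$ and $N(x)\cap N(y)=\{z\}$, so there is no triangle of $G-z$ through the edge $xy$ to hand to the induction. Your fallback, deleting a low-degree vertex \emph{outside} the triangle, does not repair this: such a vertex need not exist, and if it does, deleting it does not recreate the situation where the inductively produced chordal subgraph contains an edge of the original triangle that $z$ can be reattached to. The paper's Case 3 handles this differently: one deletes both $x$ and $y$ (which loses $n-1$ edges, too many for a $-3$ bound with only three edges added back) and compensates by exhibiting a \emph{fourth} addable edge, using the structural fact that every vertex outside $\{x,y,z\}$ is adjacent to exactly one of $x,y$, with a further split on whether $z$ is isolated in $H'$. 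This is a concrete piece of the argument that your outline does not contain.

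The second gap is the one-vertex recursion $g_2(n-1,m-d)\ge g_2(n,m)-2$ (together with $m-d\ge t_2(n-1)+1$) under the hypothesis $3d\le g_2(n,m)-1$, which you describe as routine. In the paper this is Lemma~\ref{lem:f(n,m) main}, and it is the technical core of the section: one first normalizes the optimal pair $(t,r)$ so that $2t-n-\frac{r}{2}\in\{-\frac{1}{2},0,\frac{1}{2}\}$ (Lemma~\ref{lem:range of t,r}), then compares it to an optimal pair for the smaller instance, derives a lower bound $d\ge t + c\bigl(2t-n-\frac{r}{2}\bigr) + (c+1)c + 1 - \frac{(c-1)^2}{4}$ with $c=t'-t$, and shows this contradicts $3d\le g_2(n,m)-1$ separately in each of the three normalized sub-cases; and the preliminary claim $m-d\ge t_2(n-1)+1$ needs its own computation relating $g_2(n,m)$ to $m-t_2(n-1)$. ``Nudge the right coordinate'' does not by itself produce these quantitative bounds. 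So the approach is the right one, but the two steps you flag as ``the real obstacle'' and ``routine'' are precisely where the work lies, and as written your argument does not yet yield a proof.
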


\noindent
We need the following facts about the numbers $g_2(n,m)$. 
\begin{lemma}\label{lem:range of t,r}
	In the definition of $g_2(n,m)$, we may assume that 
	$-\frac{1}{2} \leq 2t - n - \frac{r}{2} \leq \frac{1}{2}$.
\end{lemma}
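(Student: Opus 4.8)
The plan is to show that if a pair $(t,r)$ achieves the minimum in the definition of $g_2(n,m)$ but violates the claimed two-sided inequality, then one can perturb $t$ and $r$ — increasing one and decreasing the other — so that the value $2n-t+r$ does not increase while the constraint $t(n-t)+t_2(r)\ge m$ is preserved. First I would fix an optimal pair $(t,r)$ and consider the quantity $\phi(t,r) = t(n-t) + t_2(r)$, noting that replacing $(t,r)$ by $(t+1, r-\Delta)$ changes the objective $2n-t+r$ by $-1-\Delta$ (so we are allowed to decrease $r$ by up to $1$ "for free" when we increase $t$ by $1$), while replacing $(t,r)$ by $(t-1, r+\Delta)$ changes the objective by $1+\Delta$ (so we must increase $r$ by at least... well, we must not increase the objective, so this direction is only useful if the constraint is slack). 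The key arithmetic facts are $t(n-t) - (t-1)(n-t+1) = n - 2t + 1$ and $t_2(r+1) - t_2(r) = \lceil r/2 \rceil$, and similarly $(t+1)(n-t-1) - t(n-t) = n - 2t - 1$ and $t_2(r) - t_2(r-1) = \lceil (r-1)/2\rceil = \lfloor r/2 \rfloor$.

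The main case split is on the sign of $2t - n - r/2$. Suppose first that $2t - n - \frac r2 > \frac12$, i.e. $t$ is "too large". Then I would try the move $(t,r) \mapsto (t-1, r+1)$: the objective changes by $+1 - 1 = 0$ (wait — it changes by $1 + (-1)$... increasing $r$ by $1$ and decreasing $t$ by $1$ both increase the objective, giving $+2$; that is wrong). Let me instead use $(t,r)\mapsto(t-1,r)$ when the constraint has slack, or more carefully: the right move when $t$ is too large is to decrease $t$ and see that $\phi$ does not drop below $m$. The change in $\phi$ under $t \mapsto t-1$ is $-(n - 2t + 1) = 2t - n - 1 > \frac{r}{2} - \frac12 \ge \lfloor r/2 \rfloor - 1 \ge$ (value needed). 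Hmm, decreasing $t$ by $1$ lowers the objective by... no, it raises it by $1$. So decreasing $t$ is only allowed if we simultaneously decrease $r$. So: when $t$ is too large, the move is $(t,r)\mapsto(t-1, r-1)$, which changes the objective by $+1 - 1 = 0$ and changes $\phi$ by $(2t - n - 1) - \lfloor r/2 \rfloor$. Using $2t - n - 1 > \frac r2 - \frac32 \ge \lfloor r/2\rfloor - \frac32$, we get that this change is $> -\frac32$, hence $\ge -1 > -$ nothing; actually since $\phi$ is integer-valued the change is $\ge -1$, which is not quite enough. I would need to be a touch more careful: when $2t-n-\frac r2 > \frac12$ one actually has $2t - n - 1 \ge \lceil r/2 \rceil$ or at least $\ge \lfloor r/2\rfloor$ by integrality, so $\phi$ does not decrease (or decreases by at most something absorbed), and the objective stays the same; iterating lands us in the desired window. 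The symmetric case $2t - n - \frac r2 < -\frac12$ is handled by the opposite move $(t,r)\mapsto(t+1,r+1)$, which changes the objective by $-1 + 1 = 0$ and changes $\phi$ by $(n - 2t - 1) + \lceil r/2\rceil > 0$, so the constraint is only strengthened.

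The delicate point — and the step I expect to be the main obstacle — is matching the floor/ceiling functions in $t_2$ with the half-integer bound $\pm\frac12$ so that the integrality of $\phi$ is used to conclude "$\phi$ does not decrease" rather than just "$\phi$ decreases by at most $1$". Concretely, I must verify that $2t - n - \frac r2 > \frac12$ forces $2t - n - 1 \ge \lfloor r/2\rfloor$ (so that the step $(t-1,r-1)$ keeps $\phi \ge m$), and that $2t-n-\frac r2 < -\frac12$ forces $n - 2t - 1 \ge -\lceil r/2\rceil$, i.e. $n - 2t - 1 + \lceil r/2 \rceil \ge 0$ so the step $(t+1,r+1)$ keeps $\phi\ge m$. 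Both follow from a short case analysis on the parity of $r$ together with the fact that $2t, n, r$ are integers: e.g. if $r$ is even, $2t-n-\frac r2 > \frac12$ with $2t-n-\frac r2$ being a half-integer forces $2t - n - \frac r2 \ge \frac32$, hence $2t-n-1 \ge \frac r2 + \frac12 > \frac r2 = \lfloor r/2\rfloor$; the odd case is analogous. Since each such move strictly decreases the distance $|2t - n - r/2|$ (or keeps the objective fixed while moving toward the window) and the objective is bounded below, the process terminates at an optimal pair lying in the stated range.
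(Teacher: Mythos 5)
Your proposal is correct and takes essentially the same approach as the paper: both iterate the moves $(t,r)\mapsto(t-1,r-1)$ when $2t-n-\tfrac{r}{2}$ is too large and $(t,r)\mapsto(t+1,r+1)$ when it is too small, observing that each move leaves the objective $2n-t+r$ unchanged, preserves the constraint $t(n-t)+t_2(r)\ge m$ via the half-integrality of $2t-n-\tfrac{r}{2}$, and shifts that quantity by $\mp\tfrac{3}{2}$ until it lands in $\{-\tfrac{1}{2},0,\tfrac{1}{2}\}$. Your exposition reaches the right moves after some false starts, but the final mechanism and the key integrality observation match the paper's argument.
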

\begin{proof}
	Fix $t,r$ which achieve the minimum in the definition of $g_2(n,m)$; so $t(n-t)+t_2(r) \geq m$ and $g_2(n,m) = 2n-t+r$. Put $h(t,r) := 2t - n - \frac{r}{2}$. If $h(t,r) \in \{-\frac{1}{2},0,\frac{1}{2}\}$ then we are done. If not, we try replacing $(t,r)$ with $(t-1,r-1)$ or $(t+1,r+1)$.
	Suppose first that
	$h(t,r) \geq 1$. Replace $(t,r)$ with $(t-1,r-1)$. We have $2n - (t-1) + (r-1) = 2n-t+r$ and $(t-1)(n-t+1) + t_2(r-1) = t(n-t) + t_2(r) - n + 2t - \lfloor\frac{r}{2} \rfloor - 1 \geq t(n-t) + t_2(r) \geq m$, where the penultimate inequality uses $h(t,r) \geq 1$. So $(t-1,r-1)$ also achieves the minimum in the definition of $g_2(n,m)$. Moreover, $h(t-1,r-1) = 2(t-1) - n - \frac{r-1}{2} = h(t,r) - \frac{3}{2}$. So as long as $h(t,r) \geq 1$, we can replace $(t,r)$ with $(t-1,r-1)$ and decrease $h$ by $\frac{3}{2}$. At the last step, we decrease $h$ to be in $\{-\frac{1}{2}, 0, \frac{1}{2}\}$. 
	
	Similarly, suppose that $h(t,r) \leq -1$. Replace $(t,r)$ with $(t+1,r+1)$. We have $2n-(t+1) + (r+1) = 2n-t+r$ and $(t+1)(n-t-1) + t_2(r+1) = t(n-t) + t_2(r) + n - 2t + \lceil \frac{r}{2} \rceil - 1 \geq t(n-t) + t_2(r) \geq m$, where the penultimate inequality uses $h(t,r) \leq -1$. So $(t+1,r+1)$ also achieves the minimum in the definition of $g_2(n,m)$. Also, $h(t+1,r+1) = 2(t+1) - n - \frac{r+1}{2} = h(t,r) + \frac{3}{2}$. So as long as $h(t,r) \leq -1$, we can replace $(t,r)$ with $(t+1,r+1)$ and increase $h$ by $\frac{3}{2}$. At the last step, we increase $h$ to be in $\{-\frac{1}{2}, 0, \frac{1}{2}\}$. 
\end{proof}
\begin{lemma}\label{lem:-1}
	For $n \geq 1$ and $m \geq t_2(n) + 2$, it holds that $g_2(n,m-1) \geq g_2(n,m) - 1$.
\end{lemma}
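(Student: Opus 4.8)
The plan is to prove Lemma~\ref{lem:-1} directly from the definition of $g_2$, by taking an optimal pair $(t,r)$ for the parameter $m-1$ and perturbing it slightly to produce a valid pair for the parameter $m$. Concretely, fix integers $t,r\geq 0$ with $t(n-t)+t_2(r)\geq m-1$ and $g_2(n,m-1)=2n-t+r$. The goal is to exhibit a pair $(t',r')$ with $t'(n-t')+t_2(r')\geq m$ and $2n-t'+r'\leq 2n-t+r+1$, which would give $g_2(n,m)\leq g_2(n,m-1)+1$, i.e. the claimed inequality. The natural candidate is $(t',r')=(t,r+1)$: this increases $t_2(r)$ by $\lceil r/2\rceil$, hence makes the constraint slack increase by $\lceil r/2\rceil\geq 0$, so the constraint $t'(n-t')+t_2(r')\geq m-1+1=m$ may fail only if $r=0$, and the objective $2n-t'+r'$ grows by exactly $1$.

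So the only case needing separate attention is $r=0$. Here I would instead try to increase $t$ or decrease $t$ (whichever moves $t(n-t)$ in the right direction), using the fact that consecutive values of $t(n-t)$ differ by $|n-2t-1|$. By Lemma~\ref{lem:range of t,r} we may assume the optimal pair satisfies $-\tfrac12\le 2t-n-\tfrac r2\le\tfrac12$, which when $r=0$ forces $t$ to be essentially $n/2$, so $t(n-t)$ is close to the maximum $t_2(n)\geq m-1$; in fact in this regime $t(n-t)=t_2(n)$ or $t_2(n)$ minus a tiny amount, and since $m\le t_2(n)+?$... one has to be a little careful here. Alternatively, and more cleanly: if $r=0$ and $(t,0)$ is optimal for $m-1$ then $t(n-t)\geq m-1\geq t_2(n)+1$, which is impossible since $t(n-t)\le t_2(n)$. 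Hence the case $r=0$ simply does not arise for $m-1\geq t_2(n)+2$, and the argument via $(t,r+1)$ always applies.

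The main obstacle is making sure the boundary case ($r$ small, or the optimal pair being degenerate) is genuinely vacuous or else handled by a second perturbation, and that the arithmetic of $t_2(r+1)-t_2(r)=\lceil r/2\rceil$ is applied with the correct rounding. I would therefore structure the proof as: (i) recall that an optimal pair $(t,r)$ for $m-1$ satisfies $t(n-t)+t_2(r)\ge m-1$ and $2n-t+r=g_2(n,m-1)$; (ii) observe $r\ge 1$, since otherwise $m-1\le t(n-t)\le t_2(n)$ contradicts $m-1\ge t_2(n)+1$; (iii) set $(t',r')=(t,r+1)$, note $t'(n-t')+t_2(r')=t(n-t)+t_2(r)+\lceil r/2\rceil\ge (m-1)+1=m$ (using $r\ge 1$, so actually even $r\ge 0$ would do once we know $t(n-t)+t_2(r)\ge m-1$ and we only need $\ge m$; the gain $\lceil r/2\rceil\ge 1$ is more than enough), so $(t',r')$ is admissible for $m$; (iv) conclude $g_2(n,m)\le 2n-t'+r'=2n-t+r+1=g_2(n,m-1)+1$, which rearranges to the statement. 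No heavy computation is needed; the only subtlety is the short observation in step (ii) ruling out $r=0$.
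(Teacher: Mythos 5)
Your proof is correct and takes essentially the same route as the paper: observe that an optimal pair $(t,r)$ for $m-1$ must have $r\geq 1$ (the paper notes $r\geq 2$, but $r\geq 1$ suffices), then pass to $(t,r+1)$, gaining $\lceil r/2\rceil\geq 1$ in the constraint at a cost of $+1$ in the objective. The only blemish is the parenthetical in step (iii) — the claim that "even $r\geq 0$ would do" is not right, since $\lceil 0/2\rceil = 0$; but you do establish $r\geq 1$, so the argument goes through.
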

\begin{proof}
	Let $t,r$ be such that $t(n-t) + t_2(r) \geq m-1$ and $2n-t+r = g_2(n,m-1)$. Since $m-1 \geq t_2(n)+1$ we have $r \geq 2$. Thus $t(n-t) + t_2(r+1) \geq m$, so $g_2(n,m) \leq 2n-t+(r+1) = g_2(n,m-1) + 1$, as required. 
\end{proof}
\begin{lemma}\label{lem:n-2}
	The following holds for every $n \geq 3$.
	\begin{enumerate}
		\item If $m \geq t_2(n) + 1$ then $m - n + 1 \geq t_2(n-2) + 1$ and $g_2(n-2,m-n+1) \geq g_2(n,m) - 3$.
		\item If $m \geq t_2(n) + 2$ then $m -n \geq t_2(n-2)+1$ and $g_2(n-2,m-n) \geq g_2(n,m) - 4$.
	\end{enumerate}
\end{lemma}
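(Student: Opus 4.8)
The plan is to prove both parts by transporting an optimal pair for $g_2(n-2,\cdot)$ to a feasible pair for $g_2(n,\cdot)$ via small shifts. Two elementary identities do all the work: the parity identity $t_2(n)=t_2(n-2)+(n-1)$, valid since $n$ and $n-2$ have the same parity (so $\lfloor n^2/4\rfloor-\lfloor(n-2)^2/4\rfloor=(n^2-(n-2)^2)/4=n-1$); and the algebraic identity $(t+1)(n-t-1)=t(n-2-t)+(n-1)$, which holds for every real $t$. The arithmetic claims $m-n+1\ge t_2(n-2)+1$ (when $m\ge t_2(n)+1$) and $m-n\ge t_2(n-2)+1$ (when $m\ge t_2(n)+2$) then follow at once by substituting the parity identity; in particular the quantities $g_2(n-2,m-n+1)$ and $g_2(n-2,m-n)$ are well defined.

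For part~1 I would let $(t',r')$ achieve the minimum defining $g_2(n-2,m-n+1)$, so $t'(n-2-t')+t_2(r')\ge m-n+1$ and $2(n-2)-t'+r'=g_2(n-2,m-n+1)$, and consider the pair $(t'+1,r')$ for the graph on $n$ vertices (both coordinates are nonnegative integers, so it is admissible). By the algebraic identity, $(t'+1)(n-t'-1)+t_2(r')=t'(n-2-t')+t_2(r')+(n-1)\ge m$, so this pair is feasible for $g_2(n,m)$; its value is $2n-(t'+1)+r'=g_2(n-2,m-n+1)+3$, hence $g_2(n,m)\le g_2(n-2,m-n+1)+3$, which is the stated inequality. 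Note that no appeal to Lemma~\ref{lem:range of t,r} is needed here.

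For part~2 I would similarly take $(t',r')$ achieving $g_2(n-2,m-n)$ and use the shift $(t'+1,r'+1)$, whose value is $2n-(t'+1)+(r'+1)=g_2(n-2,m-n)+4$; so it suffices to verify feasibility, i.e.\ $(t'+1)(n-t'-1)+t_2(r'+1)\ge m$. This is the one step that requires care, and the only real obstacle: increasing $r'$ by $1$ gains $t_2(r'+1)-t_2(r')=\lceil r'/2\rceil$, which is $\ge 1$ only when $r'\ge 1$. I would therefore first observe that the optimal $r'$ is indeed at least $1$: since $t'(n-2-t')\le t_2(n-2)$ for every integer $t'\ge 0$, the case $r'=0$ would force $m-n\le t'(n-2-t')\le t_2(n-2)$, contradicting $m-n\ge t_2(n-2)+1$. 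Given $r'\ge 1$, the algebraic identity yields $(t'+1)(n-t'-1)+t_2(r'+1)\ge t'(n-2-t')+t_2(r')+(n-1)+1\ge m$, establishing feasibility and hence $g_2(n,m)\le g_2(n-2,m-n)+4$.

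In summary, the argument is short and essentially mechanical; beyond bookkeeping, the substantive ingredients are just the two displayed identities and the observation $r'\ge 1$ in part~2, which is precisely where the stronger hypothesis $m\ge t_2(n)+2$ is used rather than $m\ge t_2(n)+1$.
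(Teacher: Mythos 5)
Your proof is correct. Part~1 is essentially identical to the paper's argument: both take an optimal pair $(t',r')$ for $g_2(n-2,m-n+1)$, shift to $(t'+1,r')$, and use $(t'+1)(n-1-t')=t'(n-2-t')+(n-1)$ to verify feasibility for $g_2(n,m)$. For Part~2 you take a slightly different route: the paper first applies Part~1 and then invokes Lemma~\ref{lem:-1} (which supplies $g_2(n-2,m-n)\ge g_2(n-2,m-n+1)-1$), whereas you perform the double shift $(t',r')\mapsto(t'+1,r'+1)$ directly on an optimal pair for $g_2(n-2,m-n)$, after observing that $r'\ge1$. The substance is the same — the crucial observation that the optimal second coordinate is positive (so that incrementing it raises $t_2(\cdot)$ by at least one) appears in the paper inside the proof of Lemma~\ref{lem:-1}, where it is phrased as $r\ge 2$. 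Your version is a bit more self-contained since it avoids the external lemma, at the cost of redoing a small computation that the paper factors out; both are equally short and valid.
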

\begin{proof}
	The first part in both items follows from
	$t_2(n-2) = t_2(n) - n + 1$. Let $t,r$ such that $t(n-2-t) + t_2(r) \geq m-n+1$ and $g_2(n-2,m-n+1) = 2(n-2) - t + r$. We have 
	$(t+1)(n-1-t) + t_2(r) = t(n-2-t) + n-1 + t_2(r) \geq m$. Hence, $g_2(n,m) \leq 2n - (t+1) + r = g_2(n-2,m-n+1) + 3$. This proves the first item. For the second item, $g_2(n-2,m-n) \geq g_2(n-2,m-n+1) - 1 \geq g_2(n,m) - 4$, where the first equality uses Lemma \ref{lem:-1} since $m-n+1\geq t_2(n-2)+2$.
\end{proof} 
\begin{lemma}\label{lem:f(n,m) main}
	Let $n \geq 1$ and $m \geq t_2(n) + 1$, and let $d \geq 0$ be an integer satisfying $3d \leq g_2(n,m)-1$. Then $m-d \geq t_2(n-1)+1$ and $g_2(n-1,m-d) \geq g_2(n,m) - 2$. 
\end{lemma}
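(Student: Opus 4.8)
The plan is to prove the two assertions separately; the first is routine, and the second needs a short case split followed by a delicate (tight) estimate.

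\textbf{Part 1 ($m-d\ge t_2(n-1)+1$).} Set $a:=m-t_2(n)$. First bound $g_2(n,m)$ from above by an explicit pair: since $\lceil n/2\rceil\,(n-\lceil n/2\rceil)=t_2(n)$ and $t_2(\lceil 2\sqrt a\rceil)\ge a$, the pair $(\lceil n/2\rceil,\lceil 2\sqrt a\rceil)$ is admissible, so $g_2(n,m)\le 2n-\lceil n/2\rceil+\lceil 2\sqrt a\rceil=\lfloor 3n/2\rfloor+\lceil 2\sqrt a\rceil$. Feed this into $3d\le g_2(n,m)-1$; using $\lceil 2\sqrt a\rceil\le a+1$, $\lfloor 3n/2\rfloor\le 3\lfloor n/2\rfloor+1$, and the fact that $3d$ and $3\lfloor n/2\rfloor$ are multiples of $3$, one gets $d\le\lfloor n/2\rfloor+\lfloor(a+1)/3\rfloor\le\lfloor n/2\rfloor+a-1$ for every integer $a\ge 1$. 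Since $t_2(n)-t_2(n-1)=\lfloor n/2\rfloor$, this is exactly $m-d\ge t_2(n-1)+1$.

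\textbf{Part 2, the easy case $d\le\lfloor n/2\rfloor$.} Let $(t',r')$ be \emph{any} pair with $t'(n-1-t')+t_2(r')\ge m-d$. Because $\max\{t',\,n-1-t'\}\ge\lceil(n-1)/2\rceil=\lfloor n/2\rfloor\ge d$, either $t'\ge d$ — and then $(t',r')$ is admissible for $g_2(n,m)$, of cost $2n-t'+r'$, since $t'(n-t')+t_2(r')=t'(n-1-t')+t_2(r')+t'\ge m$ — or $n-1-t'\ge d$ — and then $(t'+1,r')$ is admissible for $g_2(n,m)$, of cost $2n-t'+r'-1$. In both cases $g_2(n,m)\le 2n-t'+r'$, and minimising over such pairs gives $g_2(n,m)\le g_2(n-1,m-d)+2$.

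\textbf{Part 2, the hard case $d\ge\lfloor n/2\rfloor+1$} (so $a\ge 2$ by Part 1). Let $(t,r)$ attain $g_2(n-1,m-d)$, normalised via Lemma~\ref{lem:range of t,r} (with $n-1$ in place of $n$) to be near-balanced, $|2t-(n-1)-r/2|\le\tfrac12$; Part~1 then forces $t_2(r)\ge 1$, hence $r\ge 2$. Testing the pairs $(t,r)$ and $(t+1,r+1)$ against the definition of $g_2(n,m)$, one checks (using $t(n-1-t)+t_2(r)\ge m-d$) that both have cost $g_2(n-1,m-d)+2$ and are admissible for $g_2(n,m)$ as soon as, respectively, $t\ge d$ or $(n-1-t)+\lceil r/2\rceil\ge d$; either one gives the conclusion. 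Writing out the near-balanced relation — noting that $r$ even forces $2t-(n-1)-r/2=0$, while $r$ odd forces it to be $\pm\tfrac12$ — reduces both conditions to an inequality of the shape $r\ge 4d-2n+c$ with $c\in\{-1,0,1,2\}$ determined by these parities. To prove it, combine (i) the admissibility of $(t,r)$ with the near-balanced relation, which forces $t(n-1-t)\le t_2(n-1)-\tfrac{(r-1)^2}{16}+\tfrac14$ and hence $3r^2+2r\ge 16a'-3$, where $a':=(m-d)-t_2(n-1)=a+\lfloor n/2\rfloor-d$; with (ii) the hypothesis $3d\le g_2(n,m)-1$ and the near-optimal bound $g_2(n,m)\le\tfrac{3n}{2}+\sqrt{3a}+O(1)$ from the balanced construction (a near-balanced pair with $R\approx\tfrac43\sqrt{3a}$), which gives $d\le\tfrac n2+\tfrac{\sqrt{3a}}{3}+O(1)$ and so $a'\ge a-\tfrac{\sqrt{3a}}{3}-O(1)$. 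Substituting the lower bound on $r$ and the upper bound on $d$ into $r\ge 4d-2n+c$ and simplifying finishes the proof.

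\textbf{Expected main obstacle.} The inequality $r\ge 4d-2n+c$ is tight — equality holds for $m=t_2(n)+1$, $n$ even (where $d=n/2$, $r=2$), and for $m\approx t_3(n)$ (where $d\approx 2n/3\approx r$) — so the $\sqrt{3a}$-order terms on the two sides cancel exactly and there is no slack: the bound on $g_2(n,m)$ has to be the optimised one (the cruder $\lfloor 3n/2\rfloor+\lceil 2\sqrt a\rceil$ is not enough here), and every floor/ceiling and parity correction must be tracked precisely. It is cleanest to dispose of the bounded-parameter cases ($a$ or $n$ small) separately and then carry out the remaining bookkeeping by hand.
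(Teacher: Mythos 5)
Your Part 1 is correct, and genuinely a bit slicker than the paper's: the admissible pair $(\lceil n/2\rceil,\lceil 2\sqrt a\rceil)$ gives $g_2(n,m)\le\lfloor 3n/2\rfloor+\lceil 2\sqrt a\rceil$, and the divisibility trick on $3d$ vs.\ $3\lfloor n/2\rfloor$ cleanly yields $d\le\lfloor n/2\rfloor+a-1$, hence $m-d\ge t_2(n-1)+1$. (The paper instead constructs the explicit pair $(\lfloor n/2\rfloor+k,4k)$ with $k=\lceil\sqrt{a/3}\rceil$ and then checks $3k+1<3a$.) Your easy-case argument for Part 2 (when $d\le\lfloor n/2\rfloor$) is also correct and clean: for any admissible $(t',r')$ for $g_2(n-1,m-d)$, either $(t',r')$ (if $t'\ge d$) or $(t'+1,r')$ (if $n-1-t'\ge d$) is admissible for $g_2(n,m)$, and one of the two must apply. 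The overall framework — constructing a candidate pair for $g_2(n,m)$ directly from the optimizer for $g_2(n-1,m-d)$ — is a genuinely different route from the paper. The paper instead fixes optimal pairs on \emph{both} sides, normalises the one for $g_2(n,m)$ via Lemma~\ref{lem:range of t,r}, sets $c=t'-t$, and derives from the tightness of $(t,r)$ (i.e.\ $m-1\ge t(n-t)+t_2(r-1)$) a lower bound $d\ge t+c(2t-n-\tfrac r2)+(c+1)c+1-\tfrac{(c-1)^2}4$, which is then shown to contradict $3d\le g_2(n,m)-1$ in each of the three cases $2t-n-\tfrac r2\in\{-\tfrac12,0,\tfrac12\}$.

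However, your hard case ($d\ge\lfloor n/2\rfloor+1$) has a genuine gap. You correctly reduce to showing $r\ge 4d-2n+c$ for the normalised optimizer $(t,r)$ of $g_2(n-1,m-d)$, and you correctly derive the two ingredients ($3r^2+2r\ge 16a'-3$ from admissibility, and an upper bound on $d$ from $3d\le g_2(n,m)-1$). But the final step — ``substituting the lower bound on $r$ and the upper bound on $d$ \ldots{} and simplifying'' — is precisely where all of the difficulty lives, and it is not carried out. As you yourself observe, the target inequality is tight (equality at $m=t_2(n)+1$, $n$ even, and again near $m\approx t_3(n)$), so the leading $\sqrt{3a}$-order terms cancel exactly and the conclusion hinges entirely on constant-order corrections controlled by floors, ceilings and parities. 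Bounds of the form ``$g_2(n,m)\le\tfrac{3n}2+\sqrt{3a}+O(1)$'' and ``$a'\ge a-\tfrac{\sqrt{3a}}3-O(1)$'' cannot decide such an inequality; the $O(1)$'s are exactly what must be pinned down. In short, what remains to be done is not ``bookkeeping'' in the pejorative sense but the actual content of the lemma — analogous to the three-way case analysis on $2t-n-\tfrac r2$ that occupies the second half of the paper's proof — so as written the proposal does not constitute a proof.
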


Lemma~\ref{lem:f(n,m) main} is used in the proof of Theorem~\ref{thm:k=2 main} in case one of the vertices $x,y,z$ has degree at most $d$. We then delete this vertex, apply induction, and then add the vertex back with its two incident edges on the triangle. 
The induction carries through thanks to the bound $g_2(n-1,m-d) \geq g_2(n,m) - 2$ from Lemma~\ref{lem:f(n,m) main}. The reason for the assumption $3d \leq g_2(n,m)-1$ is that if $d(x) + d(y) + d(z) \geq g_2(n,m)$ then the graph consisting of all edges touching $x,y,z$ is a chordal graph with at least $g_2(n,m)-3$ edges, as required by Theorem~\ref{thm:k=2 main}. So we may always assume that one of $x,y,z$ has degree $d$ with $3d \leq g_2(n,m)-1$.

\begin{proof}[Proof of Lemma~\ref{lem:f(n,m) main}]
	First we show that $m-d > t_2(n-1)$. Set $a := m - t_2(n)$. 
	Since $3d \leq g_2(n,m) - 1$, it is enough to show that 
	\begin{equation}\label{eq:f(n-1,m-d)}
	g_2(n,m) < 3(m - t_2(n-1)) + 1 = 3 \cdot \left\lfloor \frac{n}{2} \right\rfloor + 3a + 1.
	\end{equation}
	For $a=1$, $g_2(n,m) = n + \lfloor \frac{n}{2} \rfloor + 2 < 3 \cdot \lfloor \frac{n}{2} \rfloor + 4$, and the last expression equals the right-hand side of \eqref{eq:f(n-1,m-d)}. Suppose now that $a \geq 2$. 
	Set $k = \lceil \sqrt{\frac{a}{3}} \rceil$, so that $3k^2 \geq a$. Set $t := \lfloor \frac{n}{2} \rfloor + k$ and $r := 4k$. Then $t_2(r) = 4k^2$ and 
	$$t(n-t) = \left( \left\lfloor \frac{n}{2} \right\rfloor + k \right)\left( \left\lceil \frac{n}{2} \right\rceil - k \right) \geq t_2(n) - k^2,$$ so 
	$t(n-t) + t_2(r) \geq t_2(n) + 3k^2 \geq t_2(n) + a = m$. Hence, 
	$g_2(n,m) \leq 2n - t + r = 2n - \lfloor \frac{n}{2} \rfloor + 3k \leq 
	3 \cdot \lfloor \frac{n}{2} \rfloor + 2 + 3k$.  
	So to prove \eqref{eq:f(n-1,m-d)}, it suffices to show that $3k+1 < 3a$. 
	As $k \leq \sqrt{\frac{a}{3}}+1$, it suffices to show that $3\left( \sqrt{\frac{a}{3}}+1 \right) < 3a-1$. Rearranging and squaring, we get the inequality $9a^2-27a+16 > 0$, which holds for all $a \geq 3$. For $a=2$ we simply note that $k = 1$ and so $3k+1 = 4 < 6 = 3a$.

	Now we show that $g_2(n-1,m-d) \geq g(n,m) - 2$. 
	Fix $t,r$ that achieve the minimum in the definition of $g_2(n,m)$; so $t(n-t)+t_2(r) \geq m$ and 
	\begin{equation}\label{eq:g_2(n,m)}
	g_2(n,m) = 2n-t+r = 3t - 2 \cdot \left(2t - n - \frac{r}{2}\right).
	\end{equation} 
	By Lemma \ref{lem:range of t,r}, we may assume that $-\frac{1}{2} \leq 2t-n-\frac{r}{2} \leq \frac{1}{2}.$
 	Fix also $t',r'$ such that $t'(n-1-t') + t_2(r') \geq m-d$ and $g_2(n-1,m-d) = 2(n-1) - t' + r'$. Suppose by contradiction that $g_2(n-1,m-d) \leq g_2(n,m)-3$. Then $2(n-1) - t' + r' \leq 2n - t + r - 3$, so $t' - t \geq r' - r + 1$. Put $c := t'-t$, so that $r' \leq r+c-1$. Then $t_2(r') \leq t_2(r+c-1)$. 
 	So we can write
	\begin{equation}\label{eq:m-d}
	\begin{split}
	m-d &\leq t'(n-1-t') + t_2(r') \leq (t+c)(n-t-c-1) + t_2(r+c-1) \\ &= t(n-t) + cn - (2c+1)t - (c+1)c + t_2(r+c-1).
	\end{split}
	\end{equation}
	Since $(t,r)$ achieves the minimum in the definition of $g_2(n,m)$, we must have $m-1 \geq t(n-t) + t_2(r-1)$, so $t(n-t) \leq m-1-t_2(r-1)$. Plugging this into \eqref{eq:m-d} and rearranging, we get 
	$$d \geq t + c(2t - n) + (c+1)c + 1 - t_2(r+c-1) + t_2(r-1).$$ 
	Note that $$t_2(r+c-1) - t_2(r-1) \leq \frac{(r+c-1)^2}{4} - \frac{(r-1)^2-1}{4} = \frac{cr}{2} + \frac{(c-1)^2}{4}.$$ So we get 
	\begin{equation}\label{eq:d-bound 1}
	d \geq t + c\left(2t-n-\frac{r}{2}\right) + (c+1)c + 1 - \frac{(c-1)^2}{4}.
	\end{equation}
	
	We now complete the proof by considering the three possible values of $2t-n-\frac{r}{2}$. Suppose first that $2t-n-\frac{r}{2} = 0$.
	Then $g_2(n,m) = 3t$ by \eqref{eq:g_2(n,m)}. 
	By \eqref{eq:d-bound 1}, we have $d \geq t + (c+1)c + 1 - \frac{(c-1)^2}{4} = t + \frac{3}{4}(c+1)^2 \geq t = g_2(n,m)/3$, in contradiction to our assumption on $d$. 
	
	Suppose now that $2t-n - \frac{r}{2} = \frac{1}{2}$. Then $g_2(n,m) = 3t-1$ by \eqref{eq:g_2(n,m)}. By \eqref{eq:d-bound 1}, we have $d \geq t + \frac{c}{2} + (c+\nolinebreak 1)c + 1 - \frac{c^2-2c+1}{4} = t+\frac{3c^2}{4} + 2c+\frac{3}{4} > t-1$, where the last inequality holds for every $c$. So $d \geq t$ and hence $3d \geq 3t \geq g_2(n,m)$, a contradiction. 
	
	Finally, suppose that $2t-n - \frac{r}{2} = -\frac{1}{2}$. Then $g_2(n,m) = 3t+1$ by \eqref{eq:g_2(n,m)}. By \eqref{eq:d-bound 1}, we have $d \geq t - \frac{c}{2} + (c+\nolinebreak 1)c + 1 - \frac{c^2-2c+1}{4} = t+\frac{3c^2}{4} + c + \frac{3}{4} > t$, where the last inequality holds for every $c$. So $d \geq t+1$ and hence $3d \geq 3t+3 \geq g_2(n,m)$, a contradiction.  
\end{proof}

\begin{proof}[Proof of Theorem \ref{thm:k=2 main}]
	The proof is by induction on $n$. The base cases $n=1,2$ are trivial because for these $n$ there is no graph on $n$ vertices with $t_2(n)+1$ edges. 
	The case $n = 3$ is also easy to verify.
	So from now on let $n \geq 4$. 
	Let $x,y,z$ be a triangle in $G$. 
	We consider several cases. After dealing with each case, we will assume in all subsequent cases that this case does not hold. 
	\paragraph{Case 1:} $d(x) + d(y) + d(z) \geq g_2(n,m)$. In this case, take $H$ to be the graph consisting of all edges touching $x,y,z$. This graph is chordal and clearly contains the edges of the triangle $x,y,z$. Also, $e(H) = d(x) + d(y) + d(z) - 3 \geq g_2(n,m) - 3$, as required. 
	\paragraph{Case 2:} There are distinct $u,v \in \{x,y,z\}$ such that $d(u) + d(v) \leq n$. Without loss of generality, suppose that $u = x, v = y$. Let $G' = G - \{x,y\}$. Then $
	e(G') = e(G) - d(x) - d(y) + 1 \geq m - n + 1$.
	By the induction hypothesis (applied to any arbitrary triangle in $G'$), $G'$ contains a chordal subgraph $H'$ with $e(H') \geq g_2(n-2,m-n+1) - 3 \geq g_2(n,m) - 6$, by Lemma \ref{lem:n-2}. Let $H := H' + \{xy,xz,yz\}$. Then $H$ is chordal, contains the edges of the triangle $x,y,z$, and satisfies $e(H) = e(H') + 3 \geq g_2(n,m) - 3$.
	
	\vspace{0.2cm}
	We claim that if cases 1-2 do not hold then $m \geq t_2(n)+2$. Indeed, suppose by contradiction that $m = t_2(n)+1$. We have $g_2(n,t_2(n)+1) = 2n - \lceil \frac{n}{2} \rceil + 2$. Since case 1 does not hold,
	$d(x) + d(y) + d(z) \leq g_2(n,t_2(n)+1) - 1 \leq \frac{3n}{2} + 1$. But then there are distinct $u,v \in \{x,y,z\}$ with $d(u) + d(v) \leq \frac{2}{3} \cdot \left( \frac{3n}{2} + 1 \right) \leq n + \frac{2}{3}$. Hence $d(u) + d(v) \leq n$, contradicting that case 2 does not hold. So $m \geq t_2(n)+2$. 
	
	\paragraph{Case 3:}
	There are distinct $u,v \in \{x,y,z\}$ such that the edge $uv$ is on exactly one triangle (namely the triangle $x,y,z$). Without loss of generality, suppose that $u = x, v = y$. Since case 2 does not hold, we have $d(x) + d(y) \geq n+1$. On the other hand, since $xy$ is on exactly one triangle, it must be that $d(x) + d(y) = n+1$, $N(x) \cap N(y) = \{z\}$ and every vertex in $V(G) \setminus \{x,y,z\}$ is adjacent to exactly one of $x,y$. 
	Let $G' = G - \{x,y\}$. Then 
	$e(G') = e(G) - d(x) - d(y) + 1 = m - n$.
    By Lemma~\ref{lem:n-2}, $e(G') \geq t_2(v(G')) + 1$, which means that $G'$ contains a triangle. 
	By the induction hypothesis (applied to an arbitrary triangle in $G'$), $G'$ contains a chordal subgraph $H'$ with 
	$
	e(H') \geq g_2(n-2,m-n) - 3 \geq g_2(n,m) - 7,
	$
	by Lemma \ref{lem:n-2}. 
	So it is enough to show that $G$ contains a chordal graph $H$ which contains the edges of the triangle $x,y,z$ and satisfies $e(H) = e(H') + 4$. Suppose first that $z$ is isolated in $H'$. We claim that there exists $w \in N_G(z) \setminus \{x,y\}$. Indeed, if not, then $d_G(z) = 2$. Also, as $d(x) + d(y) = n+1$, we have $d(x) \leq \lfloor \frac{n+1}{2} \rfloor$ or $d(y) \leq \lfloor \frac{n+1}{2} \rfloor$. Suppose this holds for $y$. Then $d(y) + d(z) \leq \lfloor \frac{n+1}{2} \rfloor + 2 \leq n$ for $n \geq 4$, so case 2 holds, contradiction. This proves our claim that there exists $w \in N_G(z) \setminus \{x,y\}$. 
	Now take $H = H' + \{zw,xy,xz,yz\}$. By adding the vertices in the order $z,x,y$, we always add a simplicial vertex. Hence, $H$ is chordal.
	
	Suppose now that $z$ is not isolated in $H'$, and let $w \in V(G) \setminus \{x,y,z\}$ such that $zw \in E(H')$. As mentioned above, $w$ is adjacent in $G$ to either $x$ or $y$; without loss of generality it is adjacent to $x$. Take $H = H' + \{xw,xy,xz,yz\}$. By adding to $H'$ first $x$ and then $y$, we always add a simplicial vertex. Hence, $H$ is chordal. 
	\paragraph{Case 4:}
	Cases 1-3 do not hold. Since case 1 does not hold, we have $d(x) + d(y) + d(z) \leq g_2(n,m) - 1$. Assume that $d := d(x) \leq d(y) \leq d(z)$; then $3d \leq g_2(n,m) - 1$.  
	Since case 3 does not hold, there exists $w \in V(G) \setminus \{x\}$ which is a common neighbour of $y,z$. 
	Set $G' = G - \{x\}$. 
	We have
	$e(G') = m - d$.
	By the induction hypothesis, $G'$ has a chordal subgraph $H'$ which contains the edges of the triangle $y,z,w$ and satisfies 
	$
	e(H') \geq g_2(n-1,m-d) - 3 \geq g_2(n,m) - 5,
	$
	by Lemma \ref{lem:f(n,m) main}.
	Let $H = H' + \{xy,xz\}$. Then $H$ is chordal, contains the edges of the triangle $x,y,z$, and satisfies $e(H) = e(H') + 2 \geq g_2(n,m) - 3$, as required. 
\end{proof}

\section{Proof of Theorem \ref{thm:k=3}}\label{sec:k=3}
For convenience, put 
$g_3(n) := 3n-\lceil\frac{n}{3} \rceil+2$. We prove Theorem \ref{thm:k=3} in the following stronger form. 
\begin{theorem}\label{thm:K4}
	Let $G$ be a graph with $n$ vertices and $t_3(n)+1$ edges, and let $X = \{x_1,x_2,x_3,x_4\}$ be a $4$-clique in $G$. Then $G$ has a chordal subgraph with at least $g_3(n) - 6$ edges which contains the edges of the clique $X$. 
\end{theorem}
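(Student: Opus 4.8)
The plan is to prove Theorem~\ref{thm:K4} by induction on $n$, following the same template as the proof of Theorem~\ref{thm:k=2 main} but with the heavier case analysis the authors warn about. Write $g=g_3(n)=3n-\lceil n/3\rceil+2$ and $m=t_3(n)+1$. The first step is to record the elementary arithmetic that drives the induction: the values of $t_3(n)-t_3(n-j)$ for $j=1,2,3$ (all $\approx 2jn/3$, with the exact value depending on $n\bmod 3$) and the ``budgets'' $g_3(n)-g_3(n-j)$, which equal $3j$ up to a correction of $0$ or $1$ coming from the ceiling. These two quantities control, respectively, how large the degrees of the deleted vertices may be for the induction hypothesis to still apply (we always need the reduced graph to have \emph{more} than $t_3(n-j)$ edges, so that it still contains a $K_4$), and how many edges we must add back when re-attaching the deleted vertices to the chordal subgraph that induction hands us.

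The case analysis on the $4$-clique $X=\{x_1,x_2,x_3,x_4\}$ would start as follows. \emph{Case 1:} if $d(x_1)+\dots+d(x_4)\ge g$, take $H$ to be all edges meeting $X$; this is chordal (the vertices outside $X$ are simplicial, with neighbourhoods inside the clique $X$), it contains the $6$ edges of $X$, and $e(H)=\sum_i d(x_i)-6\ge g-6$. So from now on $\sum_i d(x_i)\le g-1$, hence the minimum degree among $x_1,\dots,x_4$ is at most $(g-1)/4\approx 2n/3$. \emph{Main case:} delete the vertex $x_i\in X$ of smallest degree; when $d(x_i)$ is small enough, $G':=G-x_i$ still has more than $t_3(n-1)$ edges, so it contains a $K_4$. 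The aim is to find a $4$-clique of $G'$ \emph{through the triangle on the remaining three vertices} $x_j,x_k,x_l$, i.e. a common neighbour $w\ne x_i$ of $x_j,x_k,x_l$; given such a $w$, apply the strengthened induction hypothesis to the $4$-clique $\{x_j,x_k,x_l,w\}$ to get a chordal $H'\subseteq G'$ containing all $6$ of its edges, and set $H:=H'+\{x_ix_j,x_ix_k,x_ix_l\}$. Adding $x_i$ back is simplicial (its three neighbours form a triangle in $H'$), so $H$ is chordal and contains the $K_4$ on $X$, with $e(H)=e(H')+3$; the budget inequality $g_3(n-1)-3\ge g-6$ (which one checks holds for every residue of $n$ mod $3$) finishes this case.

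It then remains to handle the degenerate situations where the main move is unavailable: either no such common neighbour $w$ exists for the minimum-degree choice (which, when $m=t_3(n)+1$ is so close to the Tur\'an number, can genuinely happen — $X$ may be the only $K_4$ of $G$), or $d(x_i)$ is just barely too large to delete. As in the $k=2$ proof (its Case~3, the ``edge on exactly one triangle''), I expect that once the easy moves are ruled out the graph is forced to be very rigid — essentially $T_3(n)$ with one added edge — so that the bound can be verified directly for it; alternatively, one chooses a different small set of vertices of $X$ to remove (two vertices, re-attaching $5$ or $6$ edges, possibly first invoking the strengthened hypothesis on a $K_4$ through an \emph{edge} of $X$ rather than a triangle) so that the count works out again. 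Finding the required clique extensions in the main case should be possible via an averaging argument in the spirit of Lemma~\ref{lem:k-1 clique} together with the Edwards--Faudree-type degree-sum estimates used in Section~\ref{sec:general}. The base cases of small $n$ are checked directly.

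The main obstacle, as the authors flag, is exactly this degenerate analysis combined with the bookkeeping forced by the term $\lceil n/3\rceil$: the three residue classes of $n$ modulo $3$ behave slightly differently, each producing its own $\pm1$ of slack or deficit, and one must match the number of re-attached edges to the budget in \emph{every} class and \emph{every} sub-case, all while guaranteeing that the reduced graph still contains a $K_4$ of the needed shape. It is the simultaneous fit of all these constraints — rather than any single inequality — that makes the proof long, and finding a slicker argument (perhaps one that avoids the residue case split, or that deletes a cleverly chosen vertex set of $X$ once and for all) would be the natural way to shorten it.
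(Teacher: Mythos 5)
Your main-line argument — strengthen the hypothesis to demand the given $K_4$ lies in the chordal subgraph, handle $\sum_i d(x_i)\ge g_3(n)$ by the star on $X$, otherwise delete a low-degree vertex $x_i\in X$, find a $K_4$ of $G-x_i$ through the triangle $\{x_j,x_k,x_l\}$, apply induction to it, and re-attach $x_i$ with $3$ edges — is exactly the paper's ``non-degenerate'' move, and your budget check $g_3(n-1)\ge g_3(n)-3$ is correct. So the skeleton matches.

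The genuine gap is that the rest of your proposal, i.e.\ what to do when the non-degenerate move is unavailable, is only a guess, and the guess does not match what actually works. You suggest that once the easy moves fail ``the graph is forced to be very rigid — essentially $T_3(n)$ with one added edge — so that the bound can be verified directly for it.'' That is not how the paper resolves the degenerate cases, and it is not clear it could: even when every vertex of $X$ fails to be deletable, $G$ is far from being pinned down to a single extremal example, so there is no single graph to ``verify directly.'' What the paper actually does is a sequence of reduction claims whose content you have not supplied: (a) one may assume some vertex outside $X$ is adjacent to $3$ of $x_1,\dots,x_4$, proved by splitting on $a_2=|A_2|$ and in each sub-case deleting a triple of vertices (sometimes including an edge-swap on the chordal subgraph such as replacing $x_1v$ by $x_ky_1$) and re-attaching $8$ or $9$ edges against the budget $g_3(n)-g_3(n-3)=8$; (b) a refined version for a high-degree $x_i$, again deleting $3$ vertices and re-attaching $8$; (c) a degree-sum bound $\sum_{j\ne i}d(x_j)\le 2n$, proved by deleting all four of $x_1,\dots,x_4$ and re-attaching $11$ edges against $g_3(n)-g_3(n-4)\le 11$; and then a final branch on whether a certain triple has a common neighbour, leading to deletions of $\{x_2,y_0,v\}$, $\{x_1,x_4\}$, or $\{x_1,x_3,x_4\}$. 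Your proposal mentions deleting two vertices and re-attaching $5$ or $6$ edges as an ``alternative,'' but the paper's degenerate moves delete $3$ or $4$ vertices and add $8$, $9$, or $11$ edges, and finding \emph{which} vertices can be deleted while keeping $e(G')\ge t_3(n')+1$ and a usable $K_4$ in $G'$ is exactly the content you would still need to produce. In short: the framework is right, the budgets you computed are right, but the degenerate case analysis — which the authors themselves describe as the hard part — is missing, and the specific shortcut you propose for it (reduce to $T_3(n)+e$) would not go through.
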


\noindent
We need some simple facts on the numbers $t_3(n)$ and $g_3(n)$.
\begin{lemma}\label{lem:t3}
    For every $n \geq 5$, it holds that $t_3(n) - t_3(n-1) = \lfloor \frac{2n}{3} \rfloor$, 
    $t_3(n) - t_3(n-2) = \lfloor \frac{4n}{3} \rfloor - 1$, 
    $t_3(n) - t_3(n-3) = 2n-3$, $t_3(n) - t_3(n-4) = g_3(n) - 7$.
\end{lemma}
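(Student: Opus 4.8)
The plan is to first establish the single-step identity $t_3(n)-t_3(n-1)=\lfloor 2n/3\rfloor$, then obtain the remaining three identities by telescoping (i.e.\ summing the single-step identity over consecutive values), and finally translate the four-step difference into the form involving $g_3(n)$.

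For the single-step identity, recall that $T_3(n)$ is obtained from $T_3(n-1)$ by adding one new vertex to a smallest part and joining it to all vertices outside that part. Hence $t_3(n)-t_3(n-1)$ equals $n-1$ minus the size of a smallest part of $T_3(n-1)$, which is $\lfloor (n-1)/3\rfloor$. A one-line check (or a split into the three residues of $n$ modulo $3$) shows $n-1-\lfloor (n-1)/3\rfloor=\lfloor 2n/3\rfloor$, proving the first identity; alternatively one can read it off directly from the closed form $t_3(m)=\binom{m}{2}-\binom{\lceil m/3\rceil}{2}-\binom{\lfloor (m+1)/3\rfloor}{2}-\binom{\lfloor m/3\rfloor}{2}$. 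The hypothesis $n\ge 5$ is exactly what is needed for all of $T_3(n),\dots,T_3(n-4)$ to be honest $3$-partite Tur\'an graphs (with nonempty parts), so the structural description applies at every step.

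Now summing gives $t_3(n)-t_3(n-j)=\sum_{i=0}^{j-1}\lfloor (2n-2i)/3\rfloor$. For $j=3$ one uses the elementary fact that for any integer $a$ the three arguments $a,\,a-2,\,a-4$ cover all residues mod $3$, so $\lfloor a/3\rfloor+\lfloor (a-2)/3\rfloor+\lfloor (a-4)/3\rfloor=a-3$; with $a=2n$ this is $2n-3$. For $j=2$ and $j=4$ one splits into the residues of $n$ mod $3$ (or peels off the $j=3$ sum) to get $\lfloor 4n/3\rfloor-1$ and $2n-5+\lfloor 2n/3\rfloor$ respectively. Finally, substituting $g_3(n)=3n-\lceil n/3\rceil+2$ reduces the claim $t_3(n)-t_3(n-4)=g_3(n)-7$ to the identity $\lfloor 2n/3\rfloor+\lceil n/3\rceil=n$, valid for every integer $n$. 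All of this is routine floor/ceiling bookkeeping; the only care needed is keeping the cases $n\equiv 0,1,2\pmod 3$ straight, and there is no genuine obstacle.
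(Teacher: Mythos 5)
Your proof is correct. The floor identities you invoke (the telescoped sums and $\lfloor 2n/3\rfloor + \lceil n/3\rceil = n$) all check out, and the structural description of passing from $T_3(n-1)$ to $T_3(n)$ by adding a vertex to a smallest class is sound, with $n\ge 5$ ensuring all parts stay nonempty.

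Your route is genuinely different from the paper's, though: you establish only the single-step difference structurally and then telescope, reducing the rest to floor arithmetic. The paper instead gives, for each $i=1,2,3$, a direct structural description of how $T_3(n-i)$ sits inside $T_3(n)$ (delete one vertex from each of the $i$ largest classes), and then reads off $t_3(n)-t_3(n-i)$ as the degree sum of the deleted vertices minus the $\binom{i}{2}$ edges among them (since they are pairwise adjacent). The $i=4$ case is then obtained by combining the $i=1$ and the $i=3$ differences. The paper's approach keeps each difference transparent as a small graph-theoretic count with no floor-sum evaluation needed beyond the degree computations; your telescoping approach is more systematic and needs fewer structural observations but pushes more work into the floor/ceiling bookkeeping, in particular the identity $\lfloor a/3\rfloor+\lfloor(a-2)/3\rfloor+\lfloor(a-4)/3\rfloor=a-3$. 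Both are entirely routine, so the choice is a matter of taste.
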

\begin{proof}
For $i = 1,2,3$, $T_3(n-i)$ is obtained from $T_3(n)$ by deleting one vertex from each of the $i$ largest classes of $T_3(n)$. For $i = 1$, the deleted vertex has degree $\lfloor \frac{2n}{3} \rfloor$. For $i = 2$, the sum of degrees of the deleted vertices is $\lfloor \frac{4n}{3} \rfloor$, and these vertices are adjacent. For $i = 3$, the sum of degrees of the deleted vertices is $2n$, and they form a triangle. Finally, $t_3(n) - t_3(n-4) = t_3(n) - t_3(n-1) + t_3(n-1) - t_3(n-4) = \lfloor \frac{2n}{3} \rfloor + 2(n-1)-3 = 3n-\lceil \frac{n}{3} \rceil - 5 = g_3(n) - 7$.
\end{proof}
\begin{lemma}\label{lem:f_3}
    For every $n \geq 5$, it holds that $g_3(n) - g_3(n-1) = 3$ if $n \equiv 0,2 \pmod{3}$ and $g_3(n) - g_3(n-1) = 2$ if $n \equiv 1 \pmod {3}$. Hence, $g_3(n) - g_3(n-2) \leq 6$, $g_3(n) - g_3(n-3) = 8$ and $g_3(n) - g_3(n-4) \leq 11$.
\end{lemma}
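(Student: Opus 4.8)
The plan is to reduce everything to the one-step difference and then do elementary bookkeeping modulo~$3$. Directly from the definition $g_3(n) = 3n - \lceil n/3 \rceil + 2$ we get
$$g_3(n) - g_3(n-1) = 3 - \bigl(\lceil n/3 \rceil - \lceil (n-1)/3 \rceil\bigr),$$
so the first claim amounts to evaluating $\delta(n) := \lceil n/3 \rceil - \lceil (n-1)/3 \rceil$. Checking the three residue classes: for $n = 3k$ both ceilings equal $k$; for $n = 3k+1$ they equal $k+1$ and $k$; for $n = 3k+2$ both equal $k+1$. Hence $\delta(n) = 1$ exactly when $n \equiv 1 \pmod 3$ and $\delta(n) = 0$ otherwise, which gives $g_3(n) - g_3(n-1) = 2$ for $n \equiv 1 \pmod 3$ and $g_3(n) - g_3(n-1) = 3$ for $n \equiv 0, 2 \pmod 3$.

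For the remaining inequalities I would telescope, writing $g_3(n) - g_3(n-j) = \sum_{i=0}^{j-1}\bigl(g_3(n-i) - g_3(n-i-1)\bigr)$, and note that by the first part the $i$-th summand equals $2$ if $n - i \equiv 1 \pmod 3$ and $3$ otherwise. Since every summand is at most~$3$, we immediately get $g_3(n) - g_3(n-2) \le 6$. Among any three consecutive integers exactly one is $\equiv 1 \pmod 3$, so for $j = 3$ exactly one summand equals~$2$ and the other two equal~$3$, giving $g_3(n) - g_3(n-3) = 8$. Among any four consecutive integers at least one is $\equiv 1 \pmod 3$, so for $j = 4$ at least one summand equals~$2$ and the other three are at most~$3$, giving $g_3(n) - g_3(n-4) \le 11$.

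There is no real obstacle here; the only step requiring a moment's attention is the residue-class case analysis for $\delta(n)$, together with the elementary observation on how many of $j$ consecutive integers are congruent to~$1$ modulo~$3$ (at most one for $j=2$, exactly one for $j=3$, at least one for $j=4$). As an alternative that bypasses the telescoping for $j = 2, 4$, one can instead compute $\lceil n/3 \rceil - \lceil (n-j)/3 \rceil$ directly on the three residue classes: for $j = 2$ it lies in $\{0, 1\}$ and for $j = 4$ it lies in $\{1, 2\}$, whence $g_3(n) - g_3(n-j) = 3j - \bigl(\lceil n/3 \rceil - \lceil (n-j)/3 \rceil\bigr)$ is at most $6$ and $11$ respectively.
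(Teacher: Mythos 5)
Your proposal is correct and follows essentially the same route as the paper: compute $g_3(n)-g_3(n-1)$ directly from the formula, reduce to evaluating $\lceil n/3\rceil - \lceil (n-1)/3\rceil$ by residue class, and then obtain the $j=2,3,4$ statements from the one-step differences (the paper leaves the telescoping implicit under a single ``Hence''). Your write-up is just a more explicit version of the same argument.
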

\begin{proof}
$g_3(n) - g_3(n-1) = 3 - \lceil \frac{n}{3} \rceil + \lceil \frac{n-1}{3} \rceil$, and it is easy to see that $\lceil \frac{n}{3} \rceil - \lceil \frac{n-1}{3} \rceil$ equals $0$ if $n \equiv 0,2 \pmod{3}$ and equals $1$ if $n \equiv 1 \pmod{3}$. 
\end{proof}

We also need the following simple lemma saying that in a graph with $t_3(n)+1$ edges, we can find two $4$-cliques sharing 3 vertices. This fact is originally due to Dirac \cite{Dirac}. For completeness, we include a proof.
\begin{lemma}\label{lem:diamond}
A graph $G$ with $n \geq 5$ vertices and $t_3(n) + 1$ edges has two $4$-cliques sharing 3 vertices. 
\end{lemma}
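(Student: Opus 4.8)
The plan is to show that $G$ contains a vertex $v$ whose neighborhood $N(v)$ contains a "diamond," i.e.\ two triangles sharing an edge; then $v$ together with this diamond gives two $K_4$'s sharing three vertices (the shared edge plus $v$). Equivalently, I want a vertex $v$ such that $G[N(v)]$ is not a union of a clique and an independent set in the wrong sense — more precisely, I want $G[N(v)]$ to contain $K_4$ minus an edge (a diamond). So the first step is to set up a counting/extremal argument: if \emph{no} such configuration exists, then for every vertex $v$, the graph $G[N(v)]$ is diamond-free, hence every edge of $G[N(v)]$ lies in at most one triangle, which in turn tightly constrains the number of triangles and the local structure around each vertex.

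First I would recall the standard fact that a graph on $n$ vertices with more than $t_3(n)$ edges contains a $K_4$ (this is Turán's theorem for $k=3$, already used implicitly in the excerpt), so $G$ has at least one $4$-clique; the content of the lemma is upgrading a single $K_4$ to two overlapping ones. The cleanest route: take a $4$-clique $\{a,b,c,d\}$ in $G$. If some vertex outside is adjacent to three of these four vertices, we are immediately done. Otherwise every vertex in $V(G)\setminus\{a,b,c,d\}$ has at most two neighbors among $\{a,b,c,d\}$, so the number of edges incident to $\{a,b,c,d\}$ is at most $6 + 2(n-4)$. Deleting these four vertices leaves a graph $G'$ on $n-4$ vertices with at least $t_3(n) + 1 - 6 - 2(n-4) = t_3(n) - 2n + 3 = t_3(n-3) - 2n + 3 + (t_3(n)-t_3(n-3))$ — here I would use Lemma~\ref{lem:t3}, which gives $t_3(n)-t_3(n-3) = 2n-3$, so $e(G') \geq t_3(n-3) \geq t_3(n-4)+1$ (for $n$ large enough; small cases checked by hand), hence $G'$ still contains a $K_4$. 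But this alone does not produce two \emph{overlapping} cliques, so I would instead iterate: the point is that we cannot keep deleting disjoint $4$-cliques forever while staying above the Turán threshold, because each deletion of a $K_4$ whose every external neighbor has $\leq 2$ neighbors in it costs only $\approx 2n$ edges while the Turán number drops by $\approx 2n$ as well — so the slack $+1$ is eventually violated once $n$ drops. Tracking constants via Lemma~\ref{lem:t3} shows the process must stop, and it can only stop because at some stage a $4$-clique has an external vertex adjacent to three of its members — which is exactly a pair of $K_4$'s sharing three vertices, and these three vertices are then also a triangle in the original $G$ (chordality is not needed here, only the clique structure is).

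The main obstacle I anticipate is getting the bookkeeping on $t_3$ exactly right across the iteration so that the "$+1$" slack is genuinely exhausted and not merely asymptotically exhausted — in particular handling the floor/ceiling terms in $t_3(n) - t_3(n-3) = 2n-3$ and the residues $n \bmod 3$, and dealing cleanly with the small base cases $n = 5, 6, 7$ where the deletion argument has little room. An alternative, possibly cleaner, approach would be purely local: pick a vertex $v$ of maximum degree, note $d(v) > $ roughly $2n/3$ by an averaging argument on $e(G) > t_3(n)$, and show $G[N(v)]$ must contain a diamond because a diamond-free graph on $d(v)$ vertices has few edges while $G[N(v)]$ must be fairly dense (since $G$ has many triangles through high-degree vertices, by the Kruskal–Katona or simple convexity bound on triangle counts). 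I would try the local argument first since it sidesteps the iteration entirely, falling back to the deletion argument if the density bound on $G[N(v)]$ turns out not to be quite strong enough.
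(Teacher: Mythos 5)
Your fallback deletion argument is the right idea and is essentially the same inductive scheme as the paper's, with one parametric difference: you delete all four vertices of the $K_4$ at once, whereas the paper observes that (when no external vertex sees three of the four) the degree sum of the clique is at most $2n+4$, hence one of its vertices has degree at most $\lfloor n/2 \rfloor + 1 \leq \lfloor 2n/3 \rfloor$, and it deletes only that one vertex. The paper's version then needs only the single base case $n=5$, while yours needs to check $n=5,6,7,8$ by hand; both close. Your arithmetic $e(G') \geq t_3(n)-2n+3 = t_3(n-3) \geq t_3(n-4)+1$ is exactly the inductive inequality one needs.

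That said, your narration of \emph{why} the iteration terminates is backwards and should be corrected. You say the slack ``$+1$ is eventually violated once $n$ drops,'' but the inequality you just computed shows the opposite: after deleting the $K_4$, the surplus over the Tur\'an threshold does not erode but \emph{grows}, since $t_3(n-3)-t_3(n-4)=\lfloor 2(n-3)/3\rfloor \geq 1$. That growth is what makes the induction run. The process terminates for the mundane reason that $n$ strictly decreases and eventually lands in the small base-case range, where the graph is so dense (e.g.\ for $n=5$ it is $K_5$ minus at most one edge) that two overlapping $K_4$'s are immediate. Phrase it as an honest induction on $n$ using the inequality you already wrote, not as a slack-erosion argument.

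The ``local'' sketch you prefer --- pick a max-degree $v$ and hope $G[N(v)]$ is above the diamond-free threshold $\lfloor m^2/4 \rfloor$ --- is the one I would drop. It is not obviously true that the max-degree vertex has a diamond in its neighborhood: a vertex $v$ of degree $\lfloor 2n/3 \rfloor$ in a class of $T_3(n)$ not touching the extra edge has $G[N(v)]$ sitting exactly at the threshold (a complete bipartite graph, no diamond). One can patch this by choosing $v$ on a $K_4$ and arguing more carefully, but at that point you are effectively back to the deletion argument, so there is no saving.
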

\begin{proof}
The proof is by induction on $n$. For the base case $n = 5$, take a $4$-clique and notice that the remaining vertex must send at least 3 edges to this $4$-clique as $t_3(5)+1 = 9$. Let $n \geq 6$. Take a $4$-clique $x_1,\dots,x_4$. If there is a vertex outside of $x_1,\dots,x_4$ which has three neighbours in $x_1,\dots,x_4$ then we are done. Else, $d(x_1) + \dots + d(x_4) \leq 2n+4$, so there is $i \in [4]$ such that $d(x_i) \leq \lfloor \frac{n}{2}\rfloor + 1 \leq \lfloor \frac{2n}{3}\rfloor$, where the last inequality holds for all $n \geq 6$. So $G' := G-x_i$ has at least $t_3(n-1)+1$ edges, and we can apply induction.
\end{proof}

\begin{proof}[Proof of Theorem \ref{thm:K4}]
	The proof is by induction on $n$. The claim is trivial if $n \leq 4$ so suppose that $n \geq 5$. We proceed by a sequence of claims. 
	\begin{claim}\label{claim:degenerate case 0}
		We may assume that there exists $y \in V(G) \setminus \{x_1,\dots,x_4\}$ which is adjacent to at least 3 of the vertices $x_1,\dots,x_4$.
	\end{claim}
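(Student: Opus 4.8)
The plan is to prove Claim~\ref{claim:degenerate case 0} by contradiction: assume that \emph{every} vertex outside $\{x_1,\dots,x_4\}$ has at most $2$ neighbours in $X=\{x_1,\dots,x_4\}$, and derive a chordal subgraph with at least $g_3(n)-6$ edges containing the edges of $X$ directly, so that the statement of Theorem~\ref{thm:K4} holds without needing such a $y$. Under this assumption the number of edges from $V(G)\setminus X$ into $X$ is at most $2(n-4)$, so $d(x_1)+d(x_2)+d(x_3)+d(x_4)\le 2(n-4)+2\binom{4}{2}=2n+4$. First I would observe that if we are lucky and $d(x_1)+\dots+d(x_4)$ is actually large — at least $g_3(n)$ — then the subgraph $H$ consisting of all edges touching $X$ is chordal (add $x_1,x_2,x_3,x_4$ as the last four simplicial vertices, in reverse), contains $X$, and has $\sum d(x_i)-\binom 42 = \sum d(x_i)-6 \ge g_3(n)-6$ edges. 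Since $g_3(n) = 3n-\lceil n/3\rceil+2 > 2n+4$ for large $n$, this easy case essentially never applies under the standing assumption, so the real work is elsewhere.

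The main idea for the hard case is to delete one low-degree vertex among $x_1,\dots,x_4$ and recurse, using the slack we built up by assuming few edges go into $X$. From $\sum_{i} d(x_i)\le 2n+4$ there is an index, say $x_4$, with $d(x_4)\le \lfloor (2n+4)/4\rfloor = \lfloor n/2\rfloor+1$. By Lemma~\ref{lem:t3}, $t_3(n)-t_3(n-1)=\lfloor 2n/3\rfloor$, and for $n\ge 6$ we have $\lfloor n/2\rfloor+1\le \lfloor 2n/3\rfloor$, so $G':=G-x_4$ has $e(G')\ge t_3(n)+1-d(x_4)\ge t_3(n-1)+1$. Moreover $x_1,x_2,x_3$ together with any common neighbour form a $4$-clique in $G'$ (such a common neighbour exists because $x_1,x_2,x_3$ have at least $t_3(n)+1 - $ a small term edges among their neighbourhoods — more cleanly, one can simply pick an arbitrary $4$-clique of $G'$, which exists since $e(G')>t_3(n-1)$, but to later reattach $x_4$ I would prefer to keep $x_1,x_2,x_3$ in it). So I would apply the induction hypothesis to $G'$ with the $4$-clique $\{x_1,x_2,x_3,w\}$ for a suitable common neighbour $w$ of $x_1,x_2,x_3$, obtaining a chordal $H'\subseteq G'$ with $e(H')\ge g_3(n-1)-6$ containing the edges of $\{x_1,x_2,x_3,w\}$.

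Now reattach $x_4$: set $H:=H'+\{x_1x_4,x_2x_4,x_3x_4\}$. Since $N_H(x_4)=\{x_1,x_2,x_3\}$ is a clique in $H'$, $x_4$ is simplicial, so $H$ is chordal; it contains all six edges of $X=\{x_1,\dots,x_4\}$; and $e(H)=e(H')+3\ge g_3(n-1)-6+3 = g_3(n)-6 - (g_3(n)-g_3(n-1)) + 3$. By Lemma~\ref{lem:f_3}, $g_3(n)-g_3(n-1)\in\{2,3\}$, so this gives $e(H)\ge g_3(n)-6$ when $g_3(n)-g_3(n-1)=3$, but falls one short when $g_3(n)-g_3(n-1)=2$, i.e.\ when $n\equiv 1\pmod 3$. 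This off-by-one in the case $n\equiv 1\pmod 3$ is exactly where I expect the main obstacle to lie, and the fix should be to extract one extra edge: in that case $g_3(n)-g_3(n-1)=2$ comes with extra room in the Tur\'an count ($t_3(n)-t_3(n-1)=\lfloor 2n/3\rfloor$ while only $2$ more edges are needed), so either $d(x_4)$ is strictly below the threshold by at least $1$, letting me recurse with a larger surplus parameter, or $x_4$ has a neighbour $w'\notin\{x_1,x_2,x_3\}$ that already lies in $H'$ attached to one of $x_1,x_2,x_3$, allowing $H:=H'+\{x_4w'\}\cup\{x_1x_4,x_2x_4,x_3x_4\}$ while keeping $x_4$ simplicial (its neighbourhood $\{x_1,x_2,x_3,w'\}$ must be made a clique — so I would instead only add the triangle edges plus $x_4w'$ if $w'$ is adjacent in $H'$ to the clique, which needs care). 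Handling this residual case cleanly — most likely by a short separate argument bounding $d(x_4)$ more tightly or by finding the extra edge — is the crux; everything else is the routine simplicial-vertex bookkeeping and the degree inequalities from Lemmas~\ref{lem:t3} and~\ref{lem:f_3}.
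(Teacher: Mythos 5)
Your proposal has a fatal gap at its central step. You want to apply the induction hypothesis to $G' = G - x_4$ with a $4$-clique of the form $\{x_1,x_2,x_3,w\}$ for ``a suitable common neighbour $w$ of $x_1,x_2,x_3$.'' But you are arguing under the hypothesis (assumed for contradiction) that \emph{no} vertex outside $\{x_1,\dots,x_4\}$ is adjacent to $3$ or more of the $x_i$'s. Under that very hypothesis, the only common neighbour of $x_1,x_2,x_3$ in $G$ is $x_4$, which you have just deleted. So no such $w$ exists and $G'$ has no $4$-clique containing the triangle $x_1x_2x_3$. The fallback you mention — picking an arbitrary $4$-clique in $G'$ — does not rescue the argument: then $H'$ gives you no control over which edges it retains at $x_1,x_2,x_3$, so after re-inserting $x_4$ (together with the six clique edges of $X$ that the theorem requires) the vertex $x_4$ need not be simplicial and chordality can fail; worse, the resulting $H$ need not contain the edges of $X$ at all unless you force them in by hand, at which point you have added edges touching possibly non-isolated vertices of $H'$ and the simplicial-ordering argument breaks.

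As a side note, the ``off-by-one when $n\equiv 1\pmod 3$'' you flag is not a real obstacle: you need $g_3(n-1)-6+3\ge g_3(n)-6$, i.e.\ $g_3(n)-g_3(n-1)\le 3$, and Lemma~\ref{lem:f_3} gives $g_3(n)-g_3(n-1)\in\{2,3\}$ always, so the inequality holds in both residue classes (with a unit of slack when $n\equiv 1$, not a shortfall). The genuine difficulty is the one above. The paper sidesteps it by deleting $x_2,x_3,x_4$ \emph{and} stripping all edges incident to $x_1$, so that $x_1$ survives as an isolated vertex of $G'$; induction is then applied with an \emph{arbitrary} $4$-clique of $G'$, and the six edges of $X$ plus two further edges to a carefully chosen vertex $z$ (one that is adjacent to exactly two of $x_1,\dots,x_4$) are glued on, giving $8 = g_3(n)-g_3(n-3)$ new edges while preserving chordality. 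This in turn requires splitting into cases according to how the outside vertices distribute over $A_0, A_1, A_2$, and in the extreme case $a_2 = 0$ it invokes Lemma~\ref{lem:diamond} to extract two $4$-cliques sharing three vertices. None of this bookkeeping appears in your sketch, and the one-vertex-deletion route you propose cannot be patched without supplying what the hypothesis explicitly denies you.
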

	\begin{proof}[Proof of Claim \ref{claim:degenerate case 0}]
		Suppose that there is no such $y$. We will show that the assertion of Theorem \ref{thm:K4} holds. 
		For $0 \leq j \leq 2$, let $A_j$ be the set of vertices in $V(G) \setminus \{x_1,\dots,x_4\}$ which are adjacent to $j$ of the vertices $x_1,\dots,x_4$, and let $a_j = |A_j|$. Then $a_0 + a_1 + a_2 = n-4$ and $a_1 + 2a_2 = d(x_1) + \dots + d(x_4) - 12$. So 
		\begin{equation}\label{eq:degenerate case}
		d(x_1) + \dots + d(x_4) = n + 8 + a_2 - a_0.
		\end{equation}
		We consider three cases:
		\paragraph{Case 1:} $0 < a_2 \leq n-5$. Then $d(x_1) + \dots + d(x_4) \leq 2n+3$ by \eqref{eq:degenerate case}. 
		Let $G'$ be the graph obtained from $G$ by deleting $x_2,x_3,x_4$ and all edges touching $x_1$. Then $v(G') = n-3$ and 
		$e(G') = e(G) - (d(x_1) + \dots + d(x_4)) + 6 \geq e(G) - 2n + 3 = t_3(n) + 1 - 2n + 3 = t_3(n-3) + 1.$
        In particular, $n-3 \geq 4$ (else $G'$ cannot have more than $t_3(n-3)$ edges).
		By the induction hypothesis (applied to an arbitrary $4$-clique in $G'$), $G'$ contains a chordal subgraph $H'$ with $e(H') \geq g_3(n-3)-6$. Take $z \in A_2$, and assume without loss of generality that $z$ is adjacent to $x_1,x_2$. Let $H = H' + \{x_1z,x_2z\} + \{x_ix_j : 1 \leq i < j \leq 4\}$. Then $e(H) = e(H') + 8 \geq g_3(n-3) + 8 - 6 = g_3(n) - 6$. Also, $H$ is chordal: adding $x_1z$ to $H'$ keeps it chordal because $x_1$ is isolated in $H'$, and then by adding $x_2,x_3,x_4$ in this order we always add a simplicial vertex. 
		\paragraph{Case 2:} $a_2 = n-4$. Then every vertex outside of $\{x_1,\dots,x_4\}$ is adjacent to two of the vertices $x_1,\dots,x_4$. By \eqref{eq:degenerate case}, $d(x_1) + \dots + d(x_4) = 2n+4$. Pick an arbitrary $v \in V(G) \setminus \{x_1,\dots,x_4\}$ and suppose without loss of generality that $v$ is adjacent to $x_1$. Let $G'$ be the graph obtained from $G$ by deleting $x_2,x_3,x_4$ and all edges touching $x_1$ except for $x_1v$. Then $v(G') = n-3$ and $e(G') = e(G) - (d(x_1) + \dots + d(x_4)) + 7 = e(G) - 2n + 3 = t_3(n-3) + 1$. 
		Pick an arbitrary $4$-clique $y_1,\dots,y_4$ in $G'$. By the induction hypothesis, $G'$ contains a chordal subgraph $H'$ with $e(H') \geq g_3(n-3) - 6$ such that $H'$ contains the edges of the clique $y_1,\dots,y_4$. Each $y_i$ has two neighbours in $x_1,\dots,x_4$. By pigeonhole, two of the $y_i$'s have a common neighbour. Without loss of generality, we may assume that $y_1,y_2$ are both adjacent to $x_k$ for some $k \in [4]$. Suppose that $y_1$ is also adjacent to $x_{\ell}$. Let $H := H' - x_1v + \{x_ky_1,x_ky_2,x_{\ell}y_1\} +  \{x_ix_j : 1 \leq i < j \leq 4\}$, see Figure 2(a). Then $e(H) \geq e(H') + 8$ (since we add $9$ edges and delete at most $1$), so $e(H) \geq g_3(n-3) + 8 - 6 = g_3(n) - 6$. Also, $H' - x_1v$ is chordal because $x_1$ is a leaf or an isolated vertex in $H'$. 
		Now observe that $H$ is chordal: adding first $x_k$, then $x_{\ell}$ and then the other two vertices among $x_1,\dots,x_4$, we add a simplicial vertex at each step. 
		\paragraph{Case 3:} $a_2 = 0$. 
		Then there are at most $n-4$ edges between $x_1,\dots,x_4$ and $V(G) \setminus \{x_1,\dots,x_4\}$.
		Also, $d(x_1) + \dots + d(x_4) \leq n+8$ by \eqref{eq:degenerate case}. Let $G'$ be the graph obtained from $G$ by deleting $x_2,x_3,x_4$ and all edges touching $x_1$. Then $v(G') = n-3$ and $e(G') \geq e(G) - (n + 2) = t_3(n) + 1 - (n + 2) = t_3(n-3) + 1 + n - 5$. Note that $n \geq 8$ because else $e(G) \leq \binom{n-4}{2} + n - 4 + 6 < t_3(n)$ (where the last inequality holds for $n \leq 7$), a contradiction. So $v(G') \geq 5$.
		By Lemma \ref{lem:diamond}, there are distinct $v_1,v_2,w_1,w_2,w_3 \in V(G')$ such that $v_i,w_1,w_2,w_3$ is a $4$-clique for $i = 1,2$. Let $G''$ be the graph obtained from $G'$ by deleting all edges touching $v_1$. Then $v(G'') = n-3$ and $e(G'') \geq e(G') - (n-5)$ because $v_1$ is not adjacent in $G'$ to $x_1$ or to itself, leaving $n-5$ potential neighbours. So $e(G'') \geq t_3(n-3) + 1$. By the induction hypothesis, $G''$ contains a chordal subgraph $H'$ with $e(H') \geq g_3(n-3) - 6$, such that $H'$ contains the edges of the clique $v_2,w_1,w_2,w_3$. Let $H = H' + \{v_1w_i : 1 \leq i \leq 3\} + \{x_ix_j : 1 \leq i < j \leq 4\}$. Then $e(H) = e(H') + 9 \geq g_3(n) - 6$ and it is easy to check that $H$ is chordal.  
	\end{proof}
	\begin{claim}\label{claim:degenerate case}
		We may assume that for every $i \in [4]$, the following holds. 
		If $d(x_i) \geq \lfloor \frac{2n}{3} \rfloor + 1$, then there is $y \in V(G) \setminus \{x_1,\dots,x_4\}$ such that $y$ is adjacent to $x_i$ and to at least two vertices in $X \setminus \{x_i\}$. 
	\end{claim}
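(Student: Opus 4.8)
The plan is to argue by contradiction, following the same template as Claim \ref{claim:degenerate case 0}: assume the conclusion fails for some index $i$, so that $d(x_i) \geq \lfloor \frac{2n}{3}\rfloor + 1$ but no vertex outside $X$ is adjacent to $x_i$ together with two other vertices of $X$. Under this hypothesis, every neighbour of $x_i$ outside $X$ has at most one neighbour in $X\setminus\{x_i\}$. I would first use this to get a good bound on the number of edges between $X$ and the rest, and in particular to conclude that the total degree $d(x_1)+\cdots+d(x_4)$ is not too large — concretely, the ``savings'' from $x_i$ being in a sparse local neighbourhood should force $d(x_1)+\cdots+d(x_4) \le 2n + O(1)$. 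Having such a bound, deleting $x_2,x_3,x_4$ and the edges at $x_1$ (reusing the bookkeeping $e(G') = e(G) - (d(x_1)+\cdots+d(x_4)) + 6 \ge t_3(n)+1-2n+3 = t_3(n-3)+1$ from Case 1 of Claim \ref{claim:degenerate case 0}) leaves a graph $G'$ on $n-3$ vertices with at least $t_3(n-3)+1$ edges, to which induction applies.

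Next I would apply the induction hypothesis to a $4$-clique in $G'$ to obtain a chordal subgraph $H'$ with $e(H') \ge g_3(n-3)-6$, and then extend it back to a chordal subgraph $H$ of $G$ containing $X$. The extension follows the pattern of the earlier claim: add back the vertices $x_1,x_2,x_3,x_4$ in an order making each one simplicial (first the one whose neighbourhood in the current graph is already a clique, using $x_1$ isolated in $H'$ as the starting point), throwing in all six edges $x_ix_j$; if we can also re-route one or two previously-present edges to $x_i$ (analogous to the $x_1z$, $x_1v$ tricks), we pick up the $8$ or $9$ extra edges needed so that $e(H) \ge g_3(n-3) + 8 - 6 = g_3(n)-6$ using $g_3(n)-g_3(n-3) = 8$ from Lemma \ref{lem:f_3}. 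The arithmetic should go through exactly as in Claim \ref{claim:degenerate case 0} Cases 1--2.

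The main obstacle I anticipate is the boundary behaviour: when $d(x_1)+\cdots+d(x_4)$ is at its allowed maximum (the analogue of ``$a_2 = n-4$''), deleting three vertices and all edges at $x_1$ loses too many edges, and one must retain exactly one edge at $x_1$ (as in Case 2) and then find a common neighbour in $X$ for two of the $y_j$'s by pigeonhole — this requires each $y_j$ to have enough neighbours in $X$, which in turn must be squeezed out of the failure hypothesis. A secondary subtlety is handling small $n$ (the claim only needs $n \ge 5$, and one must check $v(G') \ge 4$ or $\ge 5$ as appropriate before invoking induction or Lemma \ref{lem:diamond}), and also the possibility $d(x_i)$ large for several $i$ simultaneously, though fixing one such $i$ suffices. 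I would also need to double-check that the failure of the conclusion genuinely forces the degree bound and is not vacuous, i.e., that $d(x_i) \ge \lfloor\frac{2n}{3}\rfloor+1$ is compatible with $x_i$ having a locally sparse neighbourhood only when the other $x_j$'s compensate by being small, which is precisely what makes the deletion argument work.
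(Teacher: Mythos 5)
There is a genuine gap in both the degree bound and the choice of deletion, and they interact.

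First, the bound you want, $d(x_1)+\cdots+d(x_4)\le 2n+O(1)$, does not follow from the failure hypothesis. The hypothesis says that no vertex outside $X$ is adjacent to $x_i$ together with two others of $X$; it does \emph{not} bound the number of vertices adjacent to all three of $x_2,x_3,x_4$ (but not $x_1$). If $A_3$ denotes this set and $a_3=|A_3|$, a short count gives $d(x_1)+\cdots+d(x_4)\le 2n+3+a_3-a_0$, and $a_3$ can be as large as $n-4$. So the Case-1 bookkeeping from Claim~\ref{claim:degenerate case 0} (which relies on $d(x_1)+\cdots+d(x_4)\le 2n+3$) cannot be reused: deleting $x_2,x_3,x_4$ and all edges at $x_1$ can lose far more than $2n-3$ edges, and $G'$ need not have $t_3(n-3)+1$ edges.

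Second, and related, deleting $x_2,x_3,x_4$ while keeping $x_1$ isolated is the wrong split. By Claim~\ref{claim:degenerate case 0} plus the failure hypothesis, there is a vertex $z\in A_3\setminus\{x_1\}$ adjacent to $x_2,x_3,x_4$ — and this is the vertex you must lean on to pick up the extra edges in the re-attachment. But $z$ is \emph{not} adjacent to $x_1$. If you keep only $x_1$, then in $G'$ the vertex $z$ is stranded: it has no edge to $x_1$, and any neighbour $w$ of $x_1$ outside $X$ has at most one neighbour in $\{x_2,x_3,x_4\}$ (by the failure hypothesis), so re-attaching $x_2,x_3,x_4$ can pick up at most one non-clique edge at $w$. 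That caps you at $e(H')+7$, one short of $g_3(n)-g_3(n-3)=8$. The paper avoids both problems at once by deleting $x_1,x_2,x_3$ and only a carefully chosen subset $F$ of edges at $x_4$, namely those going to vertices \emph{outside} $A_3$. The resulting bound $|F|\le d(x_4)-a_3-3+a_0$ cancels the troublesome $a_3$ term in the degree sum, so the total number of deleted edges is $\le 2n-3$, and the surviving edge $x_4z$ (or an edge re-routed to $z$) is exactly what makes $x_2,x_3$ simplicial when they are added back.

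So the broad template (contradiction, delete three of the $x_i$'s plus some edges, apply induction, re-attach) is right, but your specific deletion and the $2n+O(1)$ degree bound don't survive contact with the $A_3$-heavy configurations, and you are missing the identification and retention of $z\in A_3\setminus\{x_1\}$, which is the load-bearing idea here.
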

	\begin{proof}[Proof of Claim \ref{claim:degenerate case}]
		Without loss of generality, $i = 1$. Suppose that $d(x_1) \geq \lfloor \frac{2n}{3} \rfloor + 1$ but there is no $y$ as above. We will show that the assertion of Theorem \ref{thm:K4} holds. 
		For $0 \leq j \leq 3$, let $A_j$ be the set of vertices $z \in V(G) \setminus \{x_2,x_3,x_4\}$ which are adjacent to exactly $j$ of the vertices $x_2,x_3,x_4$, and let $a_j = |A_j|$. Note that $x_1 \in A_3$. 
		By Claim \ref{claim:degenerate case 0}, we may assume that there exists $z \in V(G) \setminus \{x_1,\dots,x_4\}$ which is adjacent to three of the vertices $x_1,\dots,x_4$. By assumption, these three vertices cannot include $x_1$, so we must have $z \in A_3$. Hence, 
		$A_3 \setminus \{x_1\} \neq \emptyset$.  
		
		We have $a_0 + a_1 + a_2 + a_3 = n-3$ and $a_1 + 2a_2 + 3a_3 = d(x_2) + d(x_3) + d(x_4) - 6$. So $a_2 + 2a_3 = d(x_2) + d(x_3) + d(x_4) - n - 3 + a_0$. By assumption, there is no $y \in V(G) \setminus \{x_1,\dots,x_4\}$ which is adjacent to $x_1$ and belongs to $A_2 \cup A_3$. Hence, $d(x_1) \leq n - a_2 - a_3 = 2n - d(x_2) - d(x_3) - d(x_4) + 3 + a_3 - a_0$. So we get
		\begin{equation}\label{eq:degenerate x1,...,x4}
		d(x_1) + \dots + d(x_4) \leq 2n+3 + a_3 - a_0.
		\end{equation}
		\noindent
		Pick a set of edges $F$ as follows: 
		\begin{itemize}
			\item Suppose first that $a_0 = 0$. We claim that $A_1 \cup A_2 \neq \emptyset$. Indeed, if $A_1 \cup A_2 = \emptyset$ then $A_3 = V(G) \setminus \nolinebreak \{x_2,x_3,x_4\}$. But then $N(x_1) = \{x_2,x_3,x_4\}$ so $d(x_1) = 3$. On the other hand, $d(x_1) \geq \lfloor \frac{2n}{3} \rfloor + 1$ by assumption. So $\lfloor \frac{2n}{3} \rfloor \leq 2$, which is false for every $n \geq 5$. Now pick an arbitrary $v \in A_1 \cup A_2$, and suppose without loss of generality that $v$ is adjacent to $x_4$. Take $F$ to be the set of all edges 
			between $x_4$ and $N(x_4) \setminus (A_3 \cup \{x_2,x_3,v\})$. 
			Note that $|F| = d(x_4) - a_3 - 3$.
			\item If $a_0 \geq 1$ then $F$ is the set of all edges 
			between $x_4$ and $N(x_4) \setminus (A_3 \cup \{x_2,x_3\})$. 
			Then $|F| = d(x_4) - a_3 - 2$. 
		\end{itemize}
		In both cases we have $|F| \leq d(x_4) - a_3 - 3 + a_0$.   
		Let $G'$ be the graph obtained from $G$ by deleting $x_1,x_2,x_3$ and all edges in $F$. So $v(G') = n-3$. To count the deleted edges, note that there are $d(x_1) + d(x_2) + d(x_3) - 3$ edges touching $x_1,x_2,x_3$, and that the edges in $F$ do not touch $x_1,x_2,x_3$. So the number of deleted edges is 
		$$
		d(x_1) + d(x_2) + d(x_3) - 3 + |F| \leq d(x_1) + \dots + d(x_4) - a_3 + a_0 - 6 \leq 2n-3,
		$$
		where the last inequality uses \eqref{eq:degenerate x1,...,x4}. So $e(G') \geq e(G) - 2n + 3 = t_3(n) + 1 - 2n + 3 = t_3(n-3) + 1$. By the induction hypothesis, $G'$ contains a chordal subgraph $H'$ with $e(H') \geq g_3(n-3) - 6$. 
		Define a subgraph $H''$ of $G'$ as follows: 
		\begin{itemize}
			\item If there is $z \in A_3 \setminus \{x_1\}$ such that $x_4z \in E(H')$, then $H'' = H'$;
			\item Else, pick any $z \in A_3 \setminus \{x_1\}$ and set $H'' = H' - x_4v + x_4z$.
		\end{itemize}
		We claim that $e(H'') \geq e(H')$. In the first item this is obvious. Suppose we are in the second item. By the definition of $F$, every neighbour of $x_4$ in $G'$ belongs to $\{v\} \cup A_3 \setminus \{x_1\}$. Since we are in the second item, the only possible neighbour of $x_4$ in $H'$ is $v$. So indeed $e(H'') \geq e(H')$. Also, $H''$ is chordal; in the second case, this is because $x_4$ is a leaf in both $H'$ and $H''$. 
		
		By the definition of $H''$, there is $z \in A_3 \setminus \{x_1\}$ such that $x_4z \in E(H'')$. Now put $H = H'' + \{x_2z,x_3z\} + \{x_ix_j : 1 \leq i < j \leq 4\}$, see Figure 2(b). Then $e(H) = e(H'') + 8 \geq g_3(n-3) + 8 - 6 = g_3(n) - 6$. Also, $H$ is chordal: adding the vertices $x_2,x_3,x_1$ in this order, we always add a simplicial vertex. This \nolinebreak proves \nolinebreak Claim \nolinebreak \ref{claim:degenerate case}. 
	\end{proof}
	\begin{figure}[h]
\centering 
\begin{subfigure}[b]{0.3\textwidth}
\begin{tikzpicture}[scale = 1.25]
%
%
%

	\coordinate (y1) at (0,0);
	\coordinate (y2) at (0,1);
	\coordinate (y3) at (1,0);
	\coordinate (y4) at (1,1);
	\coordinate (x1) at (-1,0);
	\coordinate (x2) at (-1,-1);
	\coordinate (x3) at (-2,0);
	\coordinate (x4) at (-2,-1);
	\coordinate (v) at (-1.5,0.5);
	\foreach \i in {1,2,3,4}
	{
		\draw (x\i) node[fill=black,circle,minimum size=2pt,inner sep=0pt] {};
		\draw (y\i) node[fill=black,circle,minimum size=2pt,inner sep=0pt] {};
	}

    \draw (v) node[fill=black,circle,minimum size=2pt,inner sep=0pt] {};
	
	\draw (x1) node[above] {$x_1$};
	\draw (x4) node[below] {$x_4$};
	\draw (x2) node[below] {$x_2$};
	\draw (x3) node[above] {$x_3$};
	\draw (y1) node[below] {$y_1$};
	\draw (y2) node[above] {$y_2$};
	\draw (y3) node[below] {$y_3$};
	\draw (y4) node[above] {$y_4$};
	\draw (v) node[above] {$v$};
	
	\draw (y1) -- (y2) -- (y3) -- (y4) -- (y1); \draw (y2) -- (y4); \draw (y1) -- (y3);
    \draw[color=red] (x1) -- (x2) -- (x3) -- (x4) -- (x1); \draw[color=red] (x2) -- (x4); \draw[color=red] (x1) -- (x3);
	\draw[color=red] (x1) -- (y1);
	\draw[color=red] (x1) -- (y2);
	\draw[color=red] (x2) -- (y1);
	\draw[dashed] (x1) -- (v);
\end{tikzpicture}
\caption{\centering Case 2 in the proof of Claim \ref{claim:degenerate case 0} ($k = 1,\ell=2$).}
\end{subfigure}
	\begin{subfigure}[b]{0.3\textwidth} 
	\hspace{0.2cm} 
	\begin{tikzpicture}[scale=1.5]
	\coordinate (x4) at (0,0);
	\coordinate (z) at (0,1);
	\coordinate (x2) at (0.5,-0.5);
	\coordinate (x3) at (-0.5,-0.5);
	\coordinate (x1) at (0,-1);
	\coordinate (v) at (-1,0.5);
	
	\foreach \i in {1,2,3,4}
	{
		\draw (x\i) node[fill=black,circle,minimum size=2pt,inner sep=0pt] {};
	}

	\draw (z) node[fill=black,circle,minimum size=2pt,inner sep=0pt] {};
	\draw (v) node[fill=black,circle,minimum size=2pt,inner sep=0pt] {};
	
	\draw (x1) node[below] {$x_1$};
	\draw (x2) node[right] {$x_2$};
	\draw (x3) node[left] {$x_3$};
	\draw (x4) node[right] {$x_4$};
	\draw (z) node[above] {$z$};
	\draw (v) node[left] {$v$};
	
	\draw[color=red] (x1) -- (x2) -- (x3) -- (x4) -- (x1); \draw[color=red] (x2) -- (x4); \draw[color=red] (x1) -- (x3);
	\draw (x4) -- (z);
	\draw[color=red] (x3) -- (z);
	\draw[color=red] (x2) -- (z);
	\draw[dashed] (x4) -- (v);
	\end{tikzpicture}
	\caption{Proof of Claim \ref{claim:degenerate case}. 
	}
\end{subfigure} 
\begin{subfigure}[b]{0.3\textwidth} 
\begin{tikzpicture}[scale = 1.25]
\coordinate (w1) at (0,0);
\coordinate (w2) at (0,1);
\coordinate (w3) at (1,0);
\coordinate (w4) at (1,1);
\coordinate (x1) at (-1,-1);
\coordinate (x2) at (-1,0);
\coordinate (x3) at (-2,0);
\coordinate (x4) at (-2,-1);
\foreach \i in {1,2,3,4}
{
	\draw (x\i) node[fill=black,circle,minimum size=2pt,inner sep=0pt] {};
	\draw (w\i) node[fill=black,circle,minimum size=2pt,inner sep=0pt] {};
}
\draw (w1) -- (w2) -- (w3) -- (w4) -- (w1); \draw (w1) -- (w3); \draw (w2) -- (w4);
\draw[color=red] (x1) -- (x2) -- (x3) -- (x4) -- (x1); \draw[color=red] (x1) -- (x3); \draw[color=red] (x2) -- (x4);
\draw[color=red] (x1) -- (w1); \draw[color=red] (x1) -- (w2); \draw[color=red] (x1) -- (w3);
\draw[color=red] (x2) -- (w1); \draw[color=red] (x2) -- (w2);

\draw (x1) node[below] {$x_1$};
\draw (x4) node[below] {$x_4$};
\draw (x2) node[above] {$x_2$};
\draw (x3) node[above] {$x_3$};
\draw (w1) node[below] {$w_1$};
\draw (w2) node[above] {$w_2$};
\draw (w3) node[below] {$w_3$};
\draw (w4) node[above] {$w_4$};
\end{tikzpicture}
\caption{\centering Proof of Claim \ref{claim:xi degree}.}
\end{subfigure} 
\caption{\centering Illustrations to proofs of claims. The red edges are added after applying induction. The dashed edges are deleted.}
\end{figure}
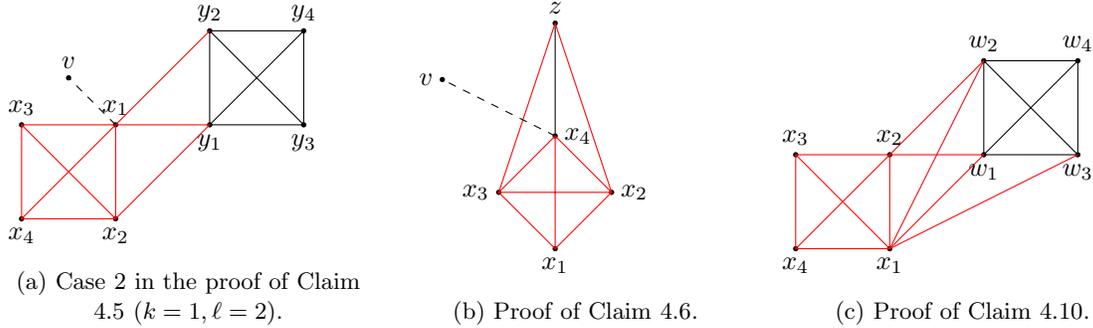

	We now continue with the proof of the theorem. 
	For $i \in [4]$, we say that $x_i$ is {\em deletable} if 
	$d(x_i) \leq \lfloor \frac{2n}{3} \rfloor$ and there is $y \in V(G) \setminus \{x_1,\dots,x_4\}$ which is adjacent to $x_j$ for all $j \in [4] \setminus \{i\}$. 
	\begin{claim}\label{claim:induction}
		We may assume that there is no deletable $x_i$. 
	\end{claim}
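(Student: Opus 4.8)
The plan is to handle the claim by a standard ``delete-and-reattach'' induction step, mirroring Cases 1--3 of the earlier proofs. Suppose some $x_i$, say $x_4$, is deletable; so $d(x_4)\le\lfloor\frac{2n}{3}\rfloor$ and there is a vertex $y\notin\{x_1,\dots,x_4\}$ adjacent to all of $x_1,x_2,x_3$. First I would form $G'=G-x_4$, which has $n-1$ vertices and $e(G')=t_3(n)+1-d(x_4)\ge t_3(n)+1-\lfloor\frac{2n}{3}\rfloor=t_3(n-1)+1$ edges by Lemma~\ref{lem:t3}. The graph $G'$ contains the $4$-clique $\{x_1,x_2,x_3,y\}$ (since $x_1,x_2,x_3$ is a triangle in $G$ and $y$ is adjacent to all three, and none of these vertices is $x_4$). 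Hence the induction hypothesis applies with this $4$-clique: $G'$ has a chordal subgraph $H'$ with $e(H')\ge g_3(n-1)-6$ that contains all edges of $\{x_1,x_2,x_3,y\}$, in particular the triangle $x_1x_2x_3$.

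Next I would reattach $x_4$. Let $H=H'+\{x_1x_4,x_2x_4,x_3x_4\}$. Since the neighbourhood of $x_4$ in $H$ is exactly $\{x_1,x_2,x_3\}$, which is a clique in $H'$ (these three edges are present because $H'$ contains the clique on $\{x_1,x_2,x_3,y\}$), the vertex $x_4$ is simplicial in $H$, so $H$ is chordal. Moreover $H$ contains all edges of the clique $X=\{x_1,x_2,x_3,x_4\}$: the three edges incident to $x_4$ were just added, and the triangle $x_1x_2x_3$ lies in $H'$. Finally $e(H)=e(H')+3\ge g_3(n-1)-6+3=g_3(n)-6$, where the last inequality is exactly the bound $g_3(n)-g_3(n-1)\le 3$ from Lemma~\ref{lem:f_3}. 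This establishes the assertion of Theorem~\ref{thm:K4} in the case that some deletable $x_i$ exists, so we may henceforth assume there is none.

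The only subtlety I anticipate is making sure the needed $4$-clique actually survives in $G'$ and that $H'$ really contains its edges --- this is why the definition of ``deletable'' insists that the common neighbour $y$ of $x_1,x_2,x_3$ lies outside $\{x_1,\dots,x_4\}$, so that deleting $x_4$ does not destroy it. The arithmetic ($t_3(n)-t_3(n-1)=\lfloor\frac{2n}{3}\rfloor$ and $g_3(n)-g_3(n-1)\le 3$) is routine given Lemmas~\ref{lem:t3} and~\ref{lem:f_3}; the chordality of $H$ is immediate from $x_4$ being simplicial. So I expect this to be one of the shorter claims, the real work being deferred to the subsequent cases where no $x_i$ is deletable and the degree sum $d(x_1)+\dots+d(x_4)$ must be analysed directly (as in Claim~\ref{claim:xi degree} and beyond).
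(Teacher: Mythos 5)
Your proof is correct and follows essentially the same argument as the paper: delete the deletable vertex, apply the induction hypothesis to the surviving $4$-clique containing the common neighbour $y$, and reattach the deleted vertex as a simplicial vertex. (One trivial slip: in the final chain $e(H')+3\ge g_3(n-1)-6+3=g_3(n)-6$ the last ``$=$'' should be ``$\ge$'', as your own parenthetical correctly explains.)
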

	\begin{proof}
		Suppose that $x_i$ is deletable. Without loss of generality, assume that $i = 1$. Let $G' = G - x_1$. Then 
		$e(G') = e(G) - d(x_1) \geq t_3(n) + 1 -\lfloor \frac{2n}{3} \rfloor = t_3(n-1) + 1$. Let $y$ be a common neighbour of $x_2,x_3,x_4$. By the induction hypothesis, $G'$ contains a chordal subgraph $H'$ with $e(H') \geq g_3(n-1) - 6$, such that $H'$ contains the edges of the clique $x_2,x_3,x_4,y$. Take $H = H' + \{x_1x_2,x_1x_3,x_1x_4\}$. Then $H$ is chordal and $e(H) = e(H') + 3 \geq g_3(n-1) + 3 - 6 \geq g_3(n) - 6$. 
	\end{proof}
	\begin{claim}\label{claim:clique star}
		We may assume that $d(x_1) + \dots + d(x_4) \leq g_3(n) - 1$.
	\end{claim}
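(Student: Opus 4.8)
The plan is to show that the claim can be assumed \emph{without loss of generality}: if the inequality $d(x_1)+\dots+d(x_4)\le g_3(n)-1$ fails, then the conclusion of Theorem~\ref{thm:K4} already holds, so this case may be excluded from the rest of the argument. Concretely, suppose for contradiction that $d(x_1)+\dots+d(x_4)\ge g_3(n)$, and let $H$ be the subgraph of $G$ consisting of all edges incident to at least one of $x_1,\dots,x_4$. This $H$ evidently contains the six clique edges $x_ix_j$ ($1\le i<j\le 4$), so it remains only to check that $H$ is chordal and has enough edges.

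For chordality, I would exhibit a perfect elimination ordering: the four vertices $x_1,\dots,x_4$ induce a clique in $H$, and every other vertex $v\in V(H)\setminus X$ satisfies $N_H(v)\subseteq\{x_1,\dots,x_4\}$, since in forming $H$ we discarded all edges not touching $X$. As any subset of the clique $X$ is itself a clique, listing the four vertices of $X$ first and then the remaining vertices of $H$ in any order gives a perfect elimination ordering, so $H$ is chordal.

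For the edge count, I would note that each of the $\binom{4}{2}=6$ edges inside $X$ is counted twice in $\sum_{i=1}^4 d(x_i)$ and each edge from $X$ to $V(G)\setminus X$ exactly once, so $e(H)=\sum_{i=1}^4 d(x_i)-6\ge g_3(n)-6$. Thus $H$ is a chordal subgraph of $G$ with at least $g_3(n)-6$ edges containing the clique $X$, and Theorem~\ref{thm:K4} holds in this case; hence in all subsequent cases we may assume $d(x_1)+\dots+d(x_4)\le g_3(n)-1$.

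I do not expect any real obstacle here: the argument is the exact analogue of Case~1 in the proof of Theorem~\ref{thm:k=2 main}, and the only point needing a moment's care is the perfect elimination ordering, which is immediate because in $H$ every vertex outside $X$ sees only (a subset of) the clique $X$.
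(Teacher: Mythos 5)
Your proposal is correct and is essentially identical to the paper's own argument: the paper also takes $H$ to be the subgraph of all edges touching $X$, observes $e(H)=\sum d(x_i)-6\ge g_3(n)-6$, and asserts chordality (which you justify slightly more explicitly via the perfect elimination ordering).
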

	\begin{proof}
		Else, take $H$ to be the subgraph of $G$ consisting of all edges which touch $x_1,\dots,x_4$. Then $H$ is chordal and $e(H) = d(x_1) + \dots + d(x_4) - 6 \geq g_3(n) - 6$. 
	\end{proof}
	
	\begin{claim}\label{claim:K5}
		We may assume that there is no $5$-clique containing $x_1,\dots,x_4$.
	\end{claim}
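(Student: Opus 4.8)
The plan is to show that if some fifth vertex $x_5$ forms a $5$-clique with $x_1,\dots,x_4$, then we can directly produce the desired chordal subgraph, so we may assume no such vertex exists. The key point is that a clique is chordal, so including $x_5$ along the clique edges costs us nothing in chordality and gains us the edge $x_5x_i$ for each $i\in[4]$ plus whatever edges $x_5$ sends elsewhere. The natural move is therefore to delete just one of the original vertices — say $x_1$ — and apply the induction hypothesis to $G-x_1$ with respect to the $4$-clique $x_2,x_3,x_4,x_5$, then add $x_1$ back connected to $x_2,x_3,x_4$ (three new edges). Since $x_1$ was the deleted vertex, $x_1$ is simplicial when added to the chordal graph $H'$ (its neighbourhood $\{x_2,x_3,x_4\}$ is a clique in $H'$ by the strengthened induction hypothesis), so $H$ is chordal and contains the $4$-clique $X$.

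The arithmetic to check is exactly the same as in Claim \ref{claim:induction}: we need $e(G-x_1)\geq t_3(n-1)+1$, which holds provided $d(x_1)\leq \lfloor 2n/3\rfloor = t_3(n)-t_3(n-1)$ (using Lemma \ref{lem:t3}); and we need $e(H)=e(H')+3\geq g_3(n-1)+3-6\geq g_3(n)-6$, which holds since $g_3(n)-g_3(n-1)\leq 3$ by Lemma \ref{lem:f_3}. So the argument goes through as long as one of $x_1,\dots,x_4$ has degree at most $\lfloor 2n/3\rfloor$. By Claim \ref{claim:clique star} we have $d(x_1)+\dots+d(x_4)\leq g_3(n)-1 \leq 3n-1$, so by averaging some $x_i$ has degree at most $\lfloor (3n-1)/4\rfloor \leq \lfloor 2n/3\rfloor$ for all $n\geq 3$; relabel so it is $x_1$.

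The main obstacle — though it is a mild one — is making sure the induction is applied to the \emph{right} $4$-clique in $G-x_1$: we must use $\{x_2,x_3,x_4,x_5\}$ rather than an arbitrary $4$-clique, because we need $x_2,x_3,x_4$ to form a triangle in $H'$ in order for $x_1$ to be simplicial when we add it back. This is legitimate precisely because $x_5\notin\{x_1,\dots,x_4\}$ and $x_1,\dots,x_4,x_5$ is a $5$-clique, so $\{x_2,x_3,x_4,x_5\}$ is a $4$-clique of $G-x_1$. Everything else is a routine repetition of the degree/averaging bookkeeping already used in Claims \ref{claim:induction} and \ref{claim:clique star}, so I expect no real difficulty.
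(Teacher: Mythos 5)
Your argument is essentially the same as the paper's: the paper dispatches this claim by observing that if every $d(x_i)\geq\lfloor 2n/3\rfloor+1$ then the degree sum is $\geq g_3(n)$ (handled by Claim~\ref{claim:clique star}), and otherwise some $x_i$ has $d(x_i)\leq\lfloor 2n/3\rfloor$ and is thus \emph{deletable} with witness $x_5$, which is handled by Claim~\ref{claim:induction}. Your ``delete $x_1$, induct on $G-x_1$ with clique $\{x_2,x_3,x_4,x_5\}$, reattach $x_1$ as a simplicial vertex'' move is precisely the content of Claim~\ref{claim:induction}, so you have reconstructed the paper's proof and just written it out rather than citing the earlier claim.

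There is, however, a genuine arithmetic slip: the relaxation $g_3(n)-1\leq 3n-1$ is too lossy, and the claimed inequality $\lfloor(3n-1)/4\rfloor\leq\lfloor 2n/3\rfloor$ is false for most $n$. For example, at $n=7$ one has $\lfloor 20/4\rfloor=5$ but $\lfloor 14/3\rfloor=4$; at $n=10$ one has $\lfloor 29/4\rfloor=7$ but $\lfloor 20/3\rfloor=6$. The averaging is in fact tight here: $g_3(n)-1=2n+\lfloor 2n/3\rfloor+1$, and $(2n+\lfloor 2n/3\rfloor+1)/4$ is essentially $\lfloor 2n/3\rfloor$ up to a fraction, so you must keep the exact expression and not round up to $3n$. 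The clean way to do it is the contrapositive, as in the paper: if every $d(x_i)\geq\lfloor 2n/3\rfloor+1$, then $\sum_i d(x_i)\geq 4\lfloor 2n/3\rfloor+4\geq 2n+\lfloor 2n/3\rfloor+2=g_3(n)$ (the middle inequality being $3\lfloor 2n/3\rfloor\geq 2n-2$). With that fix your argument is correct and coincides with the paper's.
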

	\begin{proof}
		Suppose that $x_1,\dots,x_4,y$ is a $5$-clique. 
		If $d(x_i) \leq \lfloor \frac{2n}{3} \rfloor$ then $x_i$ is deletable. 
		Assuming no $x_i$ is deletable, we have $d(x_1) + \dots + d(x_4) \geq 4 \cdot (\lfloor \frac{2n}{3} \rfloor + 1) \geq 2n + \lfloor \frac{2n}{3}\rfloor + 2= g_3(n)$, so we are done by \nolinebreak Claim \nolinebreak \ref{claim:clique star}.
	\end{proof}
	
	\begin{claim}\label{claim:xi degree}
        We may assume that for every $i \in [4]$, $\sum_{j \neq i}{d(x_j)} \leq 2n$.
    \end{claim}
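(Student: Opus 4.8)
The plan is to suppose the claim fails --- that $\sum_{j\neq i}d(x_j)\ge 2n+1$ for some $i$, which after relabelling we take to be $i=4$, so $d(x_1)+d(x_2)+d(x_3)\ge 2n+1$ --- and to produce directly a chordal subgraph of the kind required by Theorem~\ref{thm:K4}, after which we may assume the claim for the rest of the argument. The first move is to extract rigid structure from the earlier claims. From Claim~\ref{claim:clique star} we have $\sum_{j=1}^4 d(x_j)\le g_3(n)-1$, hence $d(x_4)\le g_3(n)-1-(2n+1)=\lfloor 2n/3\rfloor$. Since no $x_j$ is deletable (Claim~\ref{claim:induction}) and $d(x_4)\le\lfloor 2n/3\rfloor$, no vertex of $W:=V(G)\setminus\{x_1,\dots,x_4\}$ is adjacent to all of $x_1,x_2,x_3$; so every $w\in W$ is adjacent to at most two of $x_1,x_2,x_3$. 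Counting edges between $\{x_1,x_2,x_3\}$ and $W$ gives $\sum_{j=1}^3(d(x_j)-3)\le 2|W|=2(n-4)$, i.e. $d(x_1)+d(x_2)+d(x_3)\le 2n+1$. Therefore $d(x_1)+d(x_2)+d(x_3)=2n+1$ and, crucially, \emph{every} $w\in W$ is adjacent to exactly two of $x_1,x_2,x_3$; thus $W$ partitions into $W_{12},W_{13},W_{23}$, where $W_{pq}$ is the set of vertices adjacent to $x_p$ and $x_q$ but not to the third of $x_1,x_2,x_3$.

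Next I delete the whole clique. The number of edges removed is $\sum_{j=1}^4 d(x_j)-6\le g_3(n)-7=t_3(n)-t_3(n-4)$ (the last equality by Lemma~\ref{lem:t3}), so $G':=G-\{x_1,\dots,x_4\}$ has $n-4$ vertices and at least $t_3(n)+1-(t_3(n)-t_3(n-4))=t_3(n-4)+1$ edges; in particular it contains a $4$-clique $\{w_1,w_2,w_3,w_4\}$. By pigeonhole two of these four vertices lie in a common class $W_{pq}$; say $w_1,w_2\in W_{pq}$. Now $w_3$ is adjacent to two of $x_1,x_2,x_3$, hence to at least one of $x_p,x_q$, so we may write $\{a,b\}=\{p,q\}$ with $w_3$ adjacent to $x_a$, and let $c,d$ be the remaining two indices of $\{1,2,3,4\}$. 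Then $w_1,w_2\in N_G(x_a)\cap N_G(x_b)$ and $w_3\in N_G(x_a)$, while $w_1w_2,w_1w_3,w_2w_3$ are all edges of $G'$.

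Applying the induction hypothesis (Theorem~\ref{thm:K4} itself) to $G'$ with the $4$-clique $\{w_1,w_2,w_3,w_4\}$ gives a chordal subgraph $H'$ of $G'$ with $e(H')\ge g_3(n-4)-6$ that contains all six edges among $w_1,w_2,w_3,w_4$. Form $H$ from $H'$ by adding, in this order, $x_a$ joined to $\{w_1,w_2,w_3\}$, then $x_b$ joined to $\{x_a,w_1,w_2\}$, then $x_c$ joined to $\{x_a,x_b\}$, then $x_d$ joined to $\{x_a,x_b,x_c\}$. At each step the newly added vertex is simplicial (its current neighbourhood is a clique, using $w_1w_2\in E(H')$), so $H$ is chordal and contains the $4$-clique on $\{x_1,\dots,x_4\}$. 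We have added $3+3+2+3=11$ edges, so $e(H)=e(H')+11\ge g_3(n-4)+5\ge g_3(n)-6$, the last inequality being the bound $g_3(n)-g_3(n-4)\le 11$ from Lemma~\ref{lem:f_3}. This is the assertion of Theorem~\ref{thm:K4}.

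The step requiring the most care is the rigidity argument of the first paragraph: the point is that combining Claim~\ref{claim:clique star}, the non-deletability of $x_4$ (Claim~\ref{claim:induction}, which needs the bound $d(x_4)\le\lfloor 2n/3\rfloor$ just derived) and the negated claim forces the clean three-part partition $W=W_{12}\cup W_{13}\cup W_{23}$, and it is exactly this partition that makes an \emph{arbitrary} $4$-clique of $G'$ reattachable, so that one need not control which clique the induction returns. One should also dispatch the few small values of $n$ (roughly $n\le 7$), where $G'$ is too small to apply induction; there the derived inequality $e(G')\ge t_3(n-4)+1$ already contradicts $|V(G')|=n-4$, so the hypotheses of the claim cannot be met and the claim holds.
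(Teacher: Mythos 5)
Your proposal is correct and follows essentially the same argument as the paper's: the same use of Claims~\ref{claim:clique star} and~\ref{claim:induction} to cap $d(x_4)$ and rule out common neighbours of $x_1,x_2,x_3$, the same counting to force every vertex outside $\{x_1,\dots,x_4\}$ to have exactly two neighbours among $x_1,x_2,x_3$, the same deletion of all four clique vertices followed by applying induction to an arbitrary $4$-clique of $G'$, and the same pigeonhole reattachment adding $11$ edges. The only cosmetic difference is that you make the partition $W=W_{12}\cup W_{13}\cup W_{23}$ and the equality $d(x_1)+d(x_2)+d(x_3)=2n+1$ explicit, which the paper leaves implicit.
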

    \begin{proof}
		Suppose that $d(x_1) + d(x_2) + d(x_3) \geq 2n+1$ (the proof for each other $i$ is symmetric). 
		By Claim \ref{claim:clique star}, we may assume that
		$d(x_4) \leq g_3(n) - 1 - (2n+1) = n - \lceil \frac{n}{3}\rceil =  \lfloor \frac{2n}{3} \rfloor$.
		By Claim \ref{claim:induction}, we may assume that $x_4$ is not deletable. Then $x_1,x_2,x_3$ have no common neighbour except for $x_4$. But as $d(x_1) + d(x_2) + d(x_3) \geq 2n+1$, this means that every vertex in $V(G) \setminus \{x_1,\dots,x_4\}$ is adjacent to exactly two of the vertices $x_1,x_2,x_3$. Let $G' = G - \{x_1,\dots,x_4\}$. Then $v(G') = n-4$ and 
		$e(G') = e(G) - (d(x_1) + \dots + d(x_4)) + 6 \geq e(G)  - g_3(n) + 7 = t_3(n) + 1 - g_3(n) + 7 = t_3(n-4) + 1$ by Lemma \ref{lem:t3}. Fix an arbitrary $4$-clique $w_1,\dots,w_4$ in $G'$. By the induction hypothesis, $G'$ contains a chordal subgraph $H'$ with $e(H') \geq g_3(n-4) - 6$ such that $H'$ contains the edges of the clique $w_1,\dots,w_4$. Each $w_i$ is adjacent to two of the vertices $x_1,x_2,x_3$. By the pigeonhole principle and by symmetry, we may assume that each of $w_1,w_2$ is adjacent to $x_1,x_2$. Moreover, $w_3$ must be adjacent to $x_1$ or $x_2$; say to $x_1$. Take $H = H' + \{x_1w_1,x_1w_2,x_1w_3,x_2w_1,x_2w_2\} + \{x_ix_j : 1 \leq i < j \leq 4\}$, see Figure 2(c). Then $e(H) = e(H') + 11 \geq g_3(n-4) + 11 - 6 \geq g_3(n) - 6$. Also, $H$ is chordal: adding the vertices $x_1,x_2,x_3,x_4$ in this order, we always add a simplicial vertex. 
		This proves Claim \ref{claim:xi degree}.
	\end{proof}

	By Claim \ref{claim:degenerate case 0}, we may assume that there is $y_0 \in V(G) \setminus \{x_1,\dots,x_4\}$ which is adjacent to at least 3 of the vertices $x_1,\dots,x_4$; say to $x_2,x_3,x_4$. By Claim \ref{claim:induction}, we may assume that $x_i$ is not deletable for any $i \in [4]$. Since $x_1$ is not deletable, $d(x_1) \geq \lfloor \frac{2n}{3} \rfloor + 1$. By Claim \ref{claim:degenerate case}, there is $z_0 \in V(G) \setminus \{x_1,\dots,x_4\}$ which is adjacent to $x_1$ and to at least two of the vertices $x_2,x_3,x_4$; without loss of generality, $z_0$ is adjacent to $x_3,x_4$. Since $x_2$ is not deletable, we have $d(x_2) \geq \lfloor \frac{2n}{3} \rfloor + 1$. Also, we may assume that $y_0 \neq z_0$ because else we would have a $5$-clique containing $x_1,\dots,x_4$. 
	
	\begin{claim}\label{claim:x3,x4 degrees}
		We may assume that $d(x_i) \leq \lfloor \frac{2n}{3} \rfloor$ for each $i = 3,4$.
	\end{claim}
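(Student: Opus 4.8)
The plan is to read off Claim~\ref{claim:x3,x4 degrees} directly from the degree bounds already in hand, exactly in the spirit of the preceding ``we may assume'' claims: assuming the conclusion of the claim fails, I will contradict one of the standing assumptions, so that the conclusion of Theorem~\ref{thm:K4} holds in that case anyway. By the symmetry between $x_3$ and $x_4$ in the current configuration (the vertex $y_0$ is adjacent to $x_2,x_3,x_4$ and the vertex $z_0$ is adjacent to $x_1,x_3,x_4$, and both statements are invariant under swapping $x_3$ and $x_4$), it suffices to treat $i=3$; the argument for $i=4$ is identical.

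So suppose $d(x_3) \geq \lfloor \tfrac{2n}{3}\rfloor + 1$. Recall that immediately before the claim we have shown $d(x_1) \geq \lfloor \tfrac{2n}{3}\rfloor + 1$ (because $y_0$ is a common neighbour of $x_2,x_3,x_4$, so $x_1$ would be deletable otherwise, contradicting Claim~\ref{claim:induction}) and likewise $d(x_2) \geq \lfloor \tfrac{2n}{3}\rfloor + 1$ (because $z_0$ is a common neighbour of $x_1,x_3,x_4$). Hence
\[
d(x_1) + d(x_2) + d(x_3) \;\geq\; 3\Bigl(\bigl\lfloor \tfrac{2n}{3}\bigr\rfloor + 1\Bigr) \;\geq\; 2n+1,
\]
where the last inequality uses $3\lfloor \tfrac{2n}{3}\rfloor \geq 2n-2$, which one checks by splitting into the cases $n \equiv 0,1,2 \pmod 3$.

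This contradicts Claim~\ref{claim:xi degree} applied with $i=4$, which permits us to assume $\sum_{j\neq 4} d(x_j) = d(x_1)+d(x_2)+d(x_3) \leq 2n$. Therefore $d(x_3) \leq \lfloor \tfrac{2n}{3}\rfloor$, and by the same reasoning $d(x_4) \leq \lfloor \tfrac{2n}{3}\rfloor$, proving the claim. I do not expect any genuine obstacle here: the whole argument is a one-line computation from Claims~\ref{claim:induction} and~\ref{claim:xi degree}. The only points that need a bit of care are bookkeeping ones: one must invoke Claim~\ref{claim:xi degree} with the index $i=4$ so that $d(x_3)$ is one of the three terms being bounded, and one must remember that the lower bounds on $d(x_1)$ and $d(x_2)$ come precisely from the common neighbours $y_0$ and $z_0$ together with non-deletability.
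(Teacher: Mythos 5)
Your proof is correct and follows essentially the same route as the paper: both combine Claim~\ref{claim:xi degree} (with the index chosen so that $d(x_1), d(x_2)$ appear in the sum alongside $d(x_3)$ or $d(x_4)$) with the lower bounds $d(x_1), d(x_2) \geq \lfloor \frac{2n}{3} \rfloor + 1$ obtained from non-deletability via $y_0$ and $z_0$. The only cosmetic difference is that you argue by contradiction while the paper writes the same inequality chain directly as $d(x_i) \leq 2n - d(x_1) - d(x_2) \leq 2n - 2(\lfloor \frac{2n}{3} \rfloor + 1) \leq \lfloor \frac{2n}{3} \rfloor$.
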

	\begin{proof}
	By Claim \ref{claim:xi degree}, we may assume that $d(x_i) \leq 2n - d(x_1) - d(x_2) \leq 2n - 2 \cdot \left( \lfloor \frac{2n}{3} \rfloor + 1 \right) \leq \lfloor \frac{2n}{3} \rfloor$.
	\end{proof}
	If $d(y_0) \leq \lfloor \frac{2n}{3} \rfloor$ then we can delete $y_0$, apply induction to the remaining graph to find a chordal subgraph $H'$ with at least $g_3(n-1)-6$ edges which contains the edges of the clique $x_1,\dots,x_4$, and then add the edges $x_2y_0,x_3y_0,x_4y_0$ to $H'$ to conclude the proof. So suppose that $d(y_0) \geq \lfloor \frac{2n}{3} \rfloor+1$. 
Let $v \notin \{x_3,x_4\}$ be a common neighbour of $x_2$ and $y_0$. Such a vertex $v$ exists because $d(x_2) + d(y_0) \geq 2 \cdot \left( \lfloor \frac{2n}{3} \rfloor + 1 \right) \geq n+3$, where the last inequality holds for every $n \geq 5$. Note that $v \neq x_1$ because $x_1$ is not adjacent to $y_0$, as otherwise $x_1,\dots,x_4,y_0$ would be a $5$-clique. Similarly, $v \neq z_0$ because $v$ is adjacent to $x_2$ and $z_0$ is adjacent to $x_1,x_3,x_4$ (so otherwise $x_1,\dots,x_4,z_0$ would be a $5$-clique).
	
	Suppose first that $x_2,y_0,v$ have no common neighbour. Then $d(x_2) + d(y_0) + d(v) \leq 2n$. Let $G' = G - \{x_2,y_0,v\}$. Then $e(G') = e(G) - d(x_2) - d(y_0) - d(v) + 3 \geq t_3(n) + 1 - 2n + 3 = t_3(n-3) + 1$. By the induction hypothesis, $G'$ contains a chordal subgraph $H'$ with $e(H') \geq g_3(n-3) - 6$ such that $H'$ contains the edges of the clique $x_1,x_3,x_4,z_0$. Take $H = H' + \{x_2x_1,x_2x_3,x_2x_4,y_0x_2,y_0x_3,y_0x_4,vx_2,vy_0\}$, see Figure 3(a). Then $e(H) = e(H') + 8 \geq g_3(n-3) + 8 - 6 = g_3(n) - 6$. Also, $H$ is chordal: adding the vertices $x_2,y_0,v$ in this order, we always add a simplicial vertex.  
	
	Suppose now that $x_2,y_0,v$ have a common neighbour $w$. First, suppose that $w \in \{x_3,x_4\}$, say $w = x_3$. Let $G' = G - \{x_1,x_4\}$. We have $d(x_1) + d(x_4) \leq 2n - d(x_2) \leq 2n - \lfloor \frac{2n}{3} \rfloor - 1 \leq \lfloor \frac{4n}{3} \rfloor$, where the first inequality is by Claim \ref{claim:xi degree}. So $e(G') = e(G) - d(x_1) - d(x_4) + 1 \geq t_3(n) + 1 - \lfloor \frac{4n}{3} \rfloor + 1 = t_3(n-2) + 1$. By the induction hypothesis, $G'$ has a chordal subgraph $H'$ with $e(H') \geq g_3(n-2) - 6$ such that $H'$ contains the edges of the clique $x_2,y_0,v,x_3$. Take $H = H' + \{x_4x_2,x_4x_3,x_4y_0,x_1x_2,x_1x_3,x_1x_4\}$, see Figure 3(b). Then $e(H) = e(H') + 6 \geq g_3(n-2) + 6 - 6 \geq g_3(n) - 6$. Also, $H$ is chordal: adding $x_4$ and then $x_1$, we always add a simplicial vertex.
	Now suppose that $w \notin \{x_3,x_4\}$. Let $G' = G - \{x_1,x_3,x_4\}$. By Claims \ref{claim:xi degree} and \ref{claim:x3,x4 degrees} we have $d(x_1) + d(x_3) \leq 2n - d(x_2) \leq \frac{4n}{3}$ and $d(x_4) \leq \frac{2n}{3}$, so $d(x_1) + d(x_3) + d(x_4) \leq 2n$. Hence, $e(G') = e(G) - d(x_1) - d(x_3) - d(x_4) + 3 \geq t_3(n) + 1 - 2n + 3 = t_3(n-3) + 1$. By the induction hypothesis, $G'$ has a chordal subgraph $H'$ with $e(H') \geq g_3(n-3) - 6$ such that $H'$ contains the edges of the clique $x_2,y_0,v,w$. Now take $H = H' + \{x_3y_0,x_4y_0\} + \{x_ix_j : 1 \leq i < j \leq 4\}$, see Figure 3(c). Then $e(H) = e(H') + 8 \geq g_3(n-3) + 8 - 6 = g_3(n) - 6$. Also, $H$ is chordal: adding the vertices $x_3,x_4,x_1$ in this order, we always add a simplicial vertex.  
	This completes the proof of the theorem.  
	\end{proof} 

\begin{figure}
\begin{subfigure}{0.3\textwidth}
		\begin{tikzpicture}[scale = 1.5]
			\coordinate (x1) at (1,1);
		\coordinate (x2) at (0,1);
		\coordinate (x3) at (0,0);
		\coordinate (x4) at (1,0);
		\coordinate (y0) at (-0.7,-0.7);
		\coordinate (z0) at (1.7,-0.7);
		\coordinate (v) at (-0.35-0.35,0.14+0.35*0.41);
		\foreach \i in {1,2,3,4}
		{
			\draw (x\i) node[fill=black,circle,minimum size=2pt,inner sep=0pt] {};
		}
		\draw (y0) node[fill=black,circle,minimum size=2pt,inner sep=0pt] {};
		\draw (z0) node[fill=black,circle,minimum size=2pt,inner sep=0pt] {};
		\draw (v) node[fill=black,circle,minimum size=2pt,inner sep=0pt] {};
		
		
		\draw (x1) -- (x3) -- (x4) -- (x1); 
        \draw[color=red] (x2) -- (x3); \draw[color=red] (x2) -- (x4); \draw[color=red] (x2) -- (x1);
		\draw[color=red] (y0) -- (x2); \draw[color=red] (y0) -- (x3); \draw[color=red] (y0) -- (x4);
		\draw (z0) -- (x1); \draw (z0) -- (x3); \draw (z0) -- (x4);
		\draw[color=red] (v) -- (x2); \draw[color=red] (v) -- (y0);
		
		\draw (x1) node[above] {$x_1$};
		\draw (x2) node[above] {$x_2$};
		\draw (x3) node[below] {$x_3$};
		\draw (x4) node[below] {$x_4$};
		\draw (y0) node[below] {$y_0$};
		\draw (z0) node[below] {$z_0$};
		\draw (v) node[left] {$v$};
		\end{tikzpicture}
		\caption{\centering Deleting $x_2,y_0,v$}
\end{subfigure}
\begin{subfigure}{0.3\textwidth}
		\hspace{0.45cm}
	\begin{tikzpicture}[scale = 1.5]
		\coordinate (x1) at (1,1);
		\coordinate (x2) at (0,1);
		\coordinate (x3) at (0,0);
		\coordinate (x4) at (1,0);
		\coordinate (y0) at (-0.7,-0.7);
		\coordinate (z0) at (1.7,-0.7);
		\coordinate (v) at (-0.35-0.35,0.14+0.35*0.41);
		\foreach \i in {1,2,3,4}
		{
			\draw (x\i) node[fill=black,circle,minimum size=2pt,inner sep=0pt] {};
		}
		\draw (y0) node[fill=black,circle,minimum size=2pt,inner sep=0pt] {};
		\draw (v) node[fill=black,circle,minimum size=2pt,inner sep=0pt] {};
		
		
		\draw (x2) -- (x3);
		\draw (y0) -- (x2); \draw (y0) -- (x3); 
		\draw (v) -- (x2); \draw (v) -- (y0); \draw (v) -- (x3);
        \draw[color=red] (x1) -- (x3) -- (x4) -- (x1); \draw[color=red] (x1) -- (x2); \draw[color=red] (x4) -- (x2);
		\draw[color=red] (x4) -- (y0);
		
		\draw (x1) node[above] {$x_1$};
		\draw (x2) node[above] {$x_2$};
		\draw (x3) node[below] {$x_3$};
		\draw (x4) node[below] {$x_4$};
		\draw (y0) node[below] {$y_0$};
		\draw (v) node[left] {$v$};
		\end{tikzpicture}
		\caption{\centering Deleting $x_1,x_4$}
\end{subfigure}
\begin{subfigure}{0.3\textwidth}
		\begin{tikzpicture}[scale = 1.5]
		\coordinate (x1) at (1,1);
		\coordinate (x2) at (0,1);
		\coordinate (x3) at (0,0);
		\coordinate (x4) at (1,0);
		\coordinate (y0) at (-0.7,-0.7);
		\coordinate (z0) at (1.7,-0.7);
		\coordinate (v) at (-0.35-0.35,0.14+0.35*0.41);
		\coordinate (w) at (-0.35-0.35*3,0.14+0.35*3*0.41);
		\foreach \i in {1,2,3,4}
		{
			\draw (x\i) node[fill=black,circle,minimum size=2pt,inner sep=0pt] {};
		}
		\draw (y0) node[fill=black,circle,minimum size=2pt,inner sep=0pt] {};
		\draw (v) node[fill=black,circle,minimum size=2pt,inner sep=0pt] {};
		\draw (w) node[fill=black,circle,minimum size=2pt,inner sep=0pt] {};
		
		
		\draw (y0) -- (x2);  
		\draw (v) -- (x2); \draw (v) -- (y0); 
		\draw (w) -- (x2); \draw (w) -- (y0); \draw (w) -- (v);
        \draw[color=red] (x1) -- (x3) -- (x4) -- (x1); \draw[color=red] (x1) -- (x2); \draw[color=red] (x4) -- (x2);
		\draw[color=red] (x3) -- (y0);
		\draw[color=red] (x4) -- (y0);
		\draw[color=red] (x3) -- (x2);
		\draw[color=red] (y0) -- (x3);
		
		\draw (x1) node[above] {$x_1$};
		\draw (x2) node[above] {$x_2$};
		\draw (x3) node[below] {$x_3$};
		\draw (x4) node[below] {$x_4$};
		\draw (y0) node[below] {$y_0$};
		\draw (v) node[below left] {$v$};
		\draw (w) node[left] {$w$};
		\end{tikzpicture}
		\caption{\centering Deleting $x_1,x_3,x_4$}
\end{subfigure}	
\caption{Applying induction after deleting certain vertices. The added edges are red.}
\end{figure}
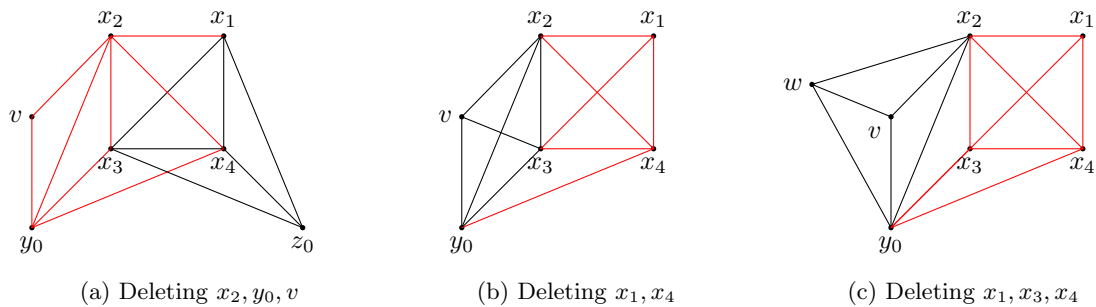

\section{Concluding remarks}
In this paper we study the maximal value $f(n,m)$ such that every graph with $n$ vertices and $m$ edges has a chordal subgraph with at least $f(n,m)$ edges.
We determine this function asymptotically for all $m$ and exactly for $m \leq t_3(n)+1$.
Our results suggest the following conjecture on the value of $f(n,m)$ for general $m$.
\begin{conjecture}\label{conj:general}
	Let $k \geq 2$, $n \geq 1$ and $t_{k}(n)+1 \leq m \leq t_{k+1}(n)$. Then 
	$$f(n,m) = \min_{t,r}(kn - t+r\big) - \binom{k+1}{2},$$ where the minimum is taken over all $t,r \geq 0$ satisfying $t_{k-1}(n-t) + t(n-t) + t_2(r) \geq m$. 
\end{conjecture}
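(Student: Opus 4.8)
The plan is to prove the conjecture in the strengthened ``prescribed-clique'' form already used for $k=1,2$ in this paper. Write $g_k(n,m):=\min_{t,r\ge 0}(kn-t+r)$, the minimum over all $t,r\ge 0$ with $t_{k-1}(n-t)+t(n-t)+t_2(r)\ge m$ (so $g_2$ agrees with the present $g_2$, since $t_1\equiv 0$). The statement to prove by induction on $n$ is: every graph $G$ with $n$ vertices, $m\ge t_k(n)+1$ edges and a $(k+1)$-clique $x_1,\dots,x_{k+1}$ has a chordal subgraph with at least $g_k(n,m)-\binom{k+1}{2}$ edges which contains all $\binom{k+1}{2}$ edges of the clique. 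This reduces to Theorem~\ref{thm:k=2 main} for $k=2$ and should yield Conjecture~\ref{conj:+1} by taking $m=t_k(n)+1$.

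I would first dispatch the upper bound, which is the easy direction and confirms the formula. Fix $t,r$ achieving the minimum in $g_k(n,m)$; as in Lemma~\ref{lem:range of t,r} one may assume $t\ge r$ and $r$ split into two nearly equal parts $|A|,|B|$. Build $G$ from a copy of $T_{k-1}(n-t)$ on a set $W$ with classes $W_1,\dots,W_{k-1}$, a set $X$ of size $t$ joined completely to $W$, and a copy of $T_2(r)$ with sides $A,B$ inside $X$; then $e(G)=t_{k-1}(n-t)+t(n-t)+t_2(r)\ge m$. The key point, generalizing the $k=2$ argument, is that $X\setminus A$ is independent in $G$, so $W_1,\dots,W_{k-1},A,X\setminus A$ is a partition of $V(G)$ into $k+1$ independent sets; for a chordal $H\subseteq G$ each induced bipartite piece satisfies $e_H(V_i,V_j)\le |V_i|+|V_j|-1$, and moreover $e_H(A,X\setminus A)=e_H(A,B)\le r-1$ since $A$ has no $G$-neighbours in $X\setminus(A\cup B)$. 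Summing the $\binom{k+1}{2}$ bounds and telescoping yields exactly $e(H)\le kn-t+r-\binom{k+1}{2}$, so $f(n,m)\le g_k(n,m)-\binom{k+1}{2}$.

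For the lower bound I would first prove a suite of arithmetic lemmas for $g_k$ generalizing Lemmas~\ref{lem:-1}--\ref{lem:f(n,m) main}: monotonicity $g_k(n,m-1)\ge g_k(n,m)-1$; a canonical-range lemma pinning the optimal $(t,r)$ modulo a bounded shift; and deletion bounds of the shape ``if $D:=d(v_1)+\dots+d(v_j)$ is below a threshold $\theta_j(n,m)$ then $m-D+\binom{j}{2}\ge t_k(n-j)+1$ and $g_k\bigl(n-j,\,m-D+\binom{j}{2}\bigr)\ge g_k(n,m)-\bigl(\binom{k+1}{2}-\binom{k+1-j}{2}\bigr)$''. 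With these in hand, the induction step splits into: (i) if $\sum_{i=1}^{k+1}d(x_i)\ge g_k(n,m)$, take $H$ = all edges meeting $\{x_1,\dots,x_{k+1}\}$, which is chordal, contains the clique, and has $\sum d(x_i)-\binom{k+1}{2}$ edges; (ii) otherwise, order $d(x_1)\le\dots\le d(x_{k+1})$; since the average clique-degree is now below $g_k(n,m)/(k+1)$, some prefix sum $d(x_1)+\dots+d(x_j)$ with $j\in\{1,\dots,k+1\}$ lies below $\theta_j$. Delete $S=\{x_1,\dots,x_j\}$. If $j\ge k$ apply the induction hypothesis to an arbitrary $(k+1)$-clique of $G-S$ (no structure from $H'$ is needed, as at most one clique vertex survives); if $j<k$, first locate $j$ further vertices forming a $(k+1)$-clique with the surviving $x_{j+1},\dots,x_{k+1}$ — possible when their common neighbourhood is large, via the Edwards--Faudree argument of Lemma~\ref{lem:k-1 clique} — and apply the induction hypothesis to that clique, so $H'$ contains the $\binom{k+1-j}{2}$ edges among $x_{j+1},\dots,x_{k+1}$. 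Add $S$ back with all missing clique edges; inserting the vertices of $S$ one at a time shows each is simplicial, so $H$ is chordal, and $e(H)=e(H')+\bigl(\binom{k+1}{2}-\binom{k+1-j}{2}\bigr)\ge g_k(n,m)-\binom{k+1}{2}$ by the deletion lemma. (iii) If the extra clique vertices in (ii) fail to exist, then as in Case~3 of the proof of Theorem~\ref{thm:k=2 main} and the claims in the proof of Theorem~\ref{thm:K4} the adjacency pattern around $x_1,\dots,x_{k+1}$ is forced to be rigid, and each such configuration is handled directly by deleting a tailored set of vertices and re-routing a few edges.

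The main obstacle — already visible at $k=3$, where the authors call the analysis ``involved'' — is step (iii): the number of degenerate adjacency patterns appears to grow with $k$, and I expect the real work to be either a uniform treatment of these, or a stability statement forcing an extremal $G$ to essentially coincide with the construction above. A secondary but substantial hurdle is the exact arithmetic of the deletion lemmas for $g_k$: as in Lemma~\ref{lem:f(n,m) main} one must track the $\sqrt{\cdot}$-type term hidden in $g_k$ together with integrality shifts, separately for each deletion size $j$, and check that the thresholds $\theta_j$ dovetail so that some valid $j$ always exists in step (ii). Making all of this fit together for every $k$ at once — rather than case by case in $k$ — is what a complete proof would require.
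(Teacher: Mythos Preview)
The statement you are attempting is Conjecture~\ref{conj:general}, which the paper leaves \emph{open}: it appears in the concluding remarks with the comment that ``some progress on this conjecture can be achieved using our techniques together with a careful case analysis, but it would be interesting to find a proof which avoids such a case analysis as much as possible.'' There is therefore no paper proof to compare against.

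Your upper-bound argument is correct and is the natural generalisation of the $k=2$ construction in Section~\ref{sec:k=1,2}. The telescoping of the $\binom{k+1}{2}$ forest bounds to $kn-t+r-\binom{k+1}{2}$ works exactly as you write.

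For the lower bound, what you have written is an outline rather than a proof, and you correctly identify the two points where it is incomplete. First, the deletion lemmas for $g_k$ (generalising Lemmas~\ref{lem:-1}--\ref{lem:f(n,m) main}) are stated but not proved; already for $k=2$ the proof of Lemma~\ref{lem:f(n,m) main} required a delicate three-way case split on the value of $2t-n-\frac{r}{2}$, and for general $k$ one now has a minimisation involving $t_{k-1}(n-t)$ rather than just $t(n-t)$, so the canonical-range lemma and the arithmetic will be substantially more intricate. Second, and more seriously, your step~(iii) is exactly the obstacle the paper flags: the degenerate adjacency configurations that must be handled when the auxiliary clique cannot be extended already occupy most of Section~\ref{sec:k=3} for $k=3$ (Claims~\ref{claim:degenerate case 0}--\ref{claim:x3,x4 degrees} and the final case split), and you offer no mechanism for handling them uniformly in $k$. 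A smaller point: in step~(ii) you invoke Lemma~\ref{lem:k-1 clique} to extend the surviving sub-clique $x_{j+1},\dots,x_{k+1}$ to a full $(k+1)$-clique, but that lemma produces a clique greedily from scratch and says nothing about extending a prescribed clique; what you would actually need is a common-neighbourhood bound for $x_{j+1},\dots,x_{k+1}$, which is a different statement.

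In short: the upper bound is fine, the overall inductive scheme is the one the authors themselves suggest, but the proposal does not resolve either of the two acknowledged bottlenecks, and the conjecture remains open.
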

\noindent
It seems very likely that some progress on this conjecture can be achieved using our techniques together with a careful case analysis, but it would be interesting to find a proof which avoids such a case analysis as much as possible.
	

\begin{thebibliography}{10}

\bibitem{BN}
B.~Bollob\'{a}s and V.~Nikiforov.
\newblock The sum of degrees in cliques.
\newblock {\em Electron. J. Combin.}, 12:\, Note 21, 10, 2005.

\bibitem{Dirac}
G.~Dirac.
\newblock Extensions of Tur{\'a}n's theorem on graphs.
\newblock {\em Acta Mathematica Hungarica}, 14(3-4):417--422, 1963.

\bibitem{Edwards76}
C.~Edwards.
\newblock Complete subgraphs with largest sum of vertex degrees.
\newblock In {\em Combinatorics (Proc. Fifth Hungarian Colloq., Keszthely,
  1976)}, Volume~1, Pages 293--306.

\bibitem{Edwards77}
C.~Edwards.
\newblock The largest vertex degree sum for a triangle in a graph.
\newblock {\em Bulletin of the London Mathematical Society}, 9(2):203--208,
  1977.

\bibitem{Edwards_n/6}
C.~Edwards.
\newblock A lower bound for the largest number of triangles with a common edge.
\newblock {\em Unpublished manuscript}, 1977.

\bibitem{EK}
J.~Enright and G.~Kondrak.
\newblock The application of chordal graphs to inferring phylogenetic trees of
  languages.
\newblock 2011.

\bibitem{Erdos62}
P.~Erd\H{o}s.
\newblock On a theorem of Rademacher-Tur{\'a}n.
\newblock {\em Illinois J. Math}, 6(122-127):1--3, 1962.

\bibitem{EFGG}
P.~Erd\H{o}s, Z.~Furedi, R.~J. Gould, and D.~S. Gunderson.
\newblock Extremal graphs for intersecting triangles.
\newblock {\em Journal of Combinatorial Theory, Series B}, 64(1):89--100, 1995.

\bibitem{EGOZ}
P.~Erd\H{o}s, A.~Gy{\'a}rf{\'a}s, E.~T. Ordman, and Y.~Zalcstein.
\newblock The size of chordal, interval and threshold subgraphs.
\newblock {\em Combinatorica}, 9(3):245--253, 1989.

\bibitem{EL}
P.~Erd\H{o}s and R.~Laskar.
\newblock A note on the size of a chordal subgraph.
\newblock In {\em Congress. Numer}, volume~48, pages 81--86, 1985.

\bibitem{Erdos69}
P.~Erd{\H{o}}s.
\newblock On the number of complete subgraphs and circuits contained in graphs.
\newblock {\em {\v{C}}asopis pro p{\v{e}}stov{\'a}n{\'\i} matematiky},
  94(3):290--296, 1969.

\bibitem{Faudree92}
R.~J. Faudree.
\newblock Complete subgraphs with large degree sums.
\newblock {\em Journal of graph theory}, 16(4):327--334, 1992.

\bibitem{Golumbic}
M.~C.~Golumbic. {\bf Algorithmic graph theory and perfect graphs}. Elsevier, 2004.

\bibitem{Gyarfas}
A.~Gy{\'a}rf{\'a}s.
\newblock Problems and memories.
\newblock {\em arXiv preprint arXiv:1307.1768}, 2013.

\bibitem{KN79}
N.~G. Khad\v{z}iivanov and V.~S. Nikiforov.
\newblock Solution of a problem of {P}. {{E}rd\H{o}s} about the maximum number
  of triangles with a common edge in a graph.
\newblock {\em Doklady Bolgarsko\u{\i} Akademii Nauk. Comptes Rendus de
  l'Acad\'{e}mie Bulgare des Sciences}, 32(10):1315--1318, 1979.
\newblock (In Russian).

\bibitem{LS}
L.~Lov{\'a}sz and M.~Simonovits.
\newblock On the number of complete subgraphs of a graph II.
\newblock In {\em Studies in pure mathematics}, pages 459--495. Springer, 1983.

\bibitem{semidefinite}
L.~Vandenberghe and M.~S. Andersen.
\newblock Chordal graphs and semidefinite optimization.
\newblock {\em Foundations and Trends in Optimization}, 1(4):241--433, 2015.

\end{thebibliography}

\paragraph{Note added:} Zachary Hunter (personal communication) very recently improved the error term in Theorem~\ref{thm:general} from $O(\sqrt{n})$ to $O(1)$ in the special case $m = t_k(n)+1$, proving that $f(n,t_k(n)+1) = (k-1/k)n - O_k(1)$.

\paragraph{Acknowledgments:} The authors would like to thank the anonymous referee for a careful reading of the paper and useful suggestions.

\end{document}